\numberwithin{equation}{section}
\newtheorem{Th}{Theorem}[section]
\newtheorem{Lemma}[Th]{Lemma}
\newtheorem{Coro}[Th]{Corollary}
\newtheorem{Prop}[Th]{Proposition}
\newtheorem{Eg}[Th]{Example}
\newtheorem{Thm}{Theorem}
\theoremstyle{definition}
\newtheorem{Def}[Th]{Definition}
\theoremstyle{remark}
\newtheorem{Rmk}[Th]{Remark}
\title[Automorphism of $QH^*_{\bbT}(T^*\calB)$ and applications]{Automorphisms of the Quantum Cohomology of the Springer Resolution and Applications} 
\author{Changzheng Li}
\address[Changzheng Li]{School of Mathematics, Sun Yat-sen University, Guangzhou 510275, P.R. China}
\email{lichangzh@mail.sysu.edu.cn}
\author{Changjian Su}
\address[Changjian Su]{Yau Mathematical Sciences Center, Tsinghua University, Beijing, China}
\email{changjiansu@mail.tsinghua.edu.cn}
\author{Rui Xiong}
\address[Rui Xiong]{Department of Mathematics and Statistics, University of Ottawa, 150 Louis-Pasteur, Ottawa, ON, K1N 6N5, Canada}
\email{rxion043@uottawa.ca}
\def\ft{\mathfrak{t}}
\def\bbC{\mathbb{C}}
\def\bbF{\mathbb{F}}
\def\bbG{\mathbb{G}}
\def\bbP{\mathbb{P}}
\def\bbT{\mathbb{T}}
\def\bbZ{\mathbb{Z}}
\def\calB{\mathcal{B}}
\def\calD{\mathcal{D}}
\def\calH{\mathcal{H}}
\def\calL{\mathcal{L}}
\def\calN{\mathcal{N}}
\def\calO{\mathcal{O}}
\def\calP{\mathcal{P}}
\def\tdlim{\lim\nolimits^{\mathrm{td}}}
\DeclareMathOperator{\pt}{pt}
\DeclareMathOperator{\id}{id}
\DeclareMathOperator{\Stab}{Stab}
\DeclareMathOperator{\Eff}{Eff}
\DeclareMathOperator{\St}{St}
\DeclareMathOperator{\reg}{reg}
\DeclareMathOperator{\Sym}{Sym}
\DeclareMathOperator{\SL}{SL}
\DeclareMathOperator{\Hom}{Hom}
\DeclareMathOperator{\cl}{cl}
\DeclareMathOperator{\CM}{CM}
\DeclareMathOperator{\Dun}{Dun}
\DeclareMathOperator{\ev}{ev}
\let\oldvee\vee
\def\vee{{\mathchoice{\oldvee}{\oldvee}{\oldvee}{\mathsf{v}}}}
\def\o#1{\hspace{0.1em}\overline{\hspace{-0.1em}#1\hspace{-0.1em}}\hspace{0.1em}}
\begin{document}

\maketitle

\begin{abstract}
    In this paper, we introduce quantum Demazure--Lusztig operators acting by ring automorphisms on the equivariant quantum cohomology of the Springer resolution. Our main application is a presentation of the torus-equivariant quantum cohomology in terms of generators and relations. We provide explicit descriptions for the classical types. We also recover Kim's earlier results for the complete flag varieties by taking the Toda limit.
\end{abstract}

\keywords{Keywords: Quantum Demazure--Lusztig operators, Springer resolution, Quantum cohomology}


\section{Introduction}

The quantum cohomology ring $QH^*(X)$ of a complex projective manifold $X$ is a deformation of the classical cohomology ring $H^*(X)=H^*(X, \mathbb{C})$ by incorporating the Gromov--Witten invariants. 
When $X$ is a non-compact algebraic variety equipped with a nice reductive group $\mathbb{G}$ action, the equivariant quantum product can still be defined via localizations (see e.g. \cite{MR3184181}). An interesting example of such varieties is the symplectic resolution, which usually comes in pairs called symplectic duals \cite{J22}. 
The symplectic duality is a 3-dimensional mirror symmetry, generalizing well-known structures in geometric representation theory \cite{MR3594665}. 
Okounkov and his collaborators initiated the study of the symplectic duality via the enumerative geometry of the symplectic resolution, which is deeply related to various structures in geometry, representation theory, and mathematical physics \cite{MR3966746}.
There have been studies of $QH^*_{\bbG}(X)$ for various examples of symplectic resolutions \cite{MR2587340, MR2782198,MR3439689, MR3095147,MR3951025,Dan22, MR3421784, MR4295090}.

The Springer resolution, which is the cotangent bundle of the complete flag variety $\calB$ of a complex semisimple, simply-connected Lie group $G$, is the most classical example of the symplectic resolutions. 
It is a resolution of the nilpotent cone in $\mathfrak{g}=\operatorname{Lie}(G)$ and admits a natural action by $\mathbb{G}=G\times \mathbb{C}^*$, where 
$\bbC^*$ dilates the cotangent fibers. 
Let $T$ be a maximal torus of $G$ with Lie algebra $\mathfrak{t}$, and denote $\bbT:=T\times \mathbb{C}^*$. 
The $\bbT$-equivariant quantum cohomology ring $QH^*_{\bbT}(T^*\calB)$ can be defined as the deformation 
  $$QH^*_{\bbT}(T^*\calB)=\big(H^*_{\bbT}(T^*\calB)\otimes \calO(T^\vee_{\reg}), \,*\,\big),$$ 
due to the equivariant quantum Chevalley formula proved by Braverman, Maulik, and Okounkov \cite[Theorem 3.2]{MR2782198}. Here $T^\vee_{\reg}$ is the complement of the union of root hyperplanes in the complex dual torus $T^\vee$ (see Section \ref{sec:Pre}). 

Our first main theorem reveals a surprising symmetry in the equivariant quantum cohomology ring of $T^*\calB$.
To explain this symmetry, let us start with the classical Springer theory. Recall that the degenerate affine Hecke algebra $\calH_\hbar$ is a $\bbC[\hbar]$-algebra generated by $\Sym(\ft^*)$ and the group algebra $\bbC[W]$, subject to interacting relations between them.
As will be reviewed in Section \ref{sec: springer},
$\calH_\hbar$ acts on the $\mathbb{T}$-equivariant cohomology $H^*_{\mathbb{T}}(T^*\calB)$, through an isomorphism of convolution algebras \cite{MR2838836, MR972345}.
The operators corresponding to $w\in W\subset \calH_\hbar$ are usually referred to as the Demazure--Lusztig operators \cite{MR1649626}. 
On the other hand, there is a natural Weyl group action on the coefficient ring $\calO(T^\vee_{\reg})$ induced by the Weyl group action on $T^\vee_{\reg}$. The \emph{quantum Demazure--Lusztig operators} $T_w$ over $QH^*_{\mathbb{T}}(T^*\calB)=H^*_{\mathbb{T}}(T^*\calB)\otimes \calO(T^\vee_{\reg})$ is the diagonal tensor product of these two actions, c.f. Definition \ref{defofQDLop}. 
Our first main theorem is the following unexpected result, for which we refer to Example \ref{eg:autoP1} for a quick illustration with  $T^*\bbP^1$.

\begin{Thm}[Theorem \ref{Swisring}] Every quantum Demazure--Lusztig operator is a ring automorphism of $QH^*_{\bbT}(T^*\calB)$. 
\end{Thm}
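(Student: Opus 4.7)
The plan is to reduce to the case of a simple reflection $s_i$ and then verify the ring-automorphism property for $T_{s_i}$ using the quantum Chevalley formula of Braverman--Maulik--Okounkov. Because ring automorphisms are closed under composition and $w \mapsto T_w$ factors through a $W$-representation with $W$ generated by simple reflections, it suffices to prove the statement for each $T_{s_i}$.

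For a fixed $s_i$, the classical Demazure--Lusztig operator on $H^*_\bbT(T^*\calB)$ is the image of $s_i\in W\subset\calH_\hbar$ under the convolution action. A direct check (or reference to the Springer representation on $H^*_\bbT(T^*\calB)$) shows that it is already a ring automorphism of the classical equivariant cohomology, twisting the coefficient subring $H^*_\bbT(\pt)=\Sym(\ft^*)\otimes\bbC[\hbar]$ by the natural $s_i$-action on $\ft^*$ fixing $\hbar$. The quantum operator $T_{s_i}$ is this classical automorphism tensored with the $s_i$-action on $\calO(T^\vee_\reg)$, so the remaining task is to show that $T_{s_i}$ respects the quantum correction terms in the product $*$.

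Since $QH^*_\bbT(T^*\calB)$ is generated as a ring by divisor classes together with its coefficient ring, it suffices to establish
\begin{equation*}
T_{s_i}(D * \alpha) = T_{s_i}(D) * T_{s_i}(\alpha)
\end{equation*}
for every divisor class $D$ and every $\alpha\in QH^*_\bbT(T^*\calB)$. Expanding both sides via the quantum Chevalley formula, each side becomes a classical cup product plus a sum over positive roots $\beta$ with coefficients rational in $q^\beta$. The $s_i$-action permutes positive roots except for $\alpha_i\mapsto -\alpha_i$; this flip produces a singularity along $\alpha_i=0$ that should be absorbed by the $s_i$-twist of the $\beta=\alpha_i$ term in the Chevalley expansion. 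The main obstacle is precisely this bookkeeping: showing that the asymmetric contribution of $\alpha_i$, once transformed by $T_{s_i}$, recombines with the images of the other summands to reproduce the Chevalley expansion for $T_{s_i}(D) * T_{s_i}(\alpha)$. A cleaner route may be to rewrite the quantum Chevalley formula in a manifestly $W$-equivariant form, reducing the verification to a structural identity inside $\calH_\hbar \otimes \calO(T^\vee_\reg)$.
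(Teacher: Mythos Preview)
Your proposal rests on a false premise: the classical Demazure--Lusztig operator $s_i\in\calH_\hbar$ acting on $H^*_\bbT(T^*\calB)$ is \emph{not} a ring automorphism for the cup product. As the paper's introduction explicitly remarks, these operators obey Leibniz-type formulas, not multiplicative ones. Concretely, from $s_i x_\lambda = x_{s_i\lambda}s_i + \hbar\langle\alpha_i^\vee,\lambda\rangle$ one computes
\[
s_i(D_\lambda\cdot D_\mu) - s_i(D_\lambda)\cdot s_i(D_\mu) \;=\; \hbar\,\langle\alpha_i^\vee,\lambda\rangle\langle\alpha_i^\vee,\mu\rangle\,(D_{\alpha_i}-\hbar),
\]
which is nonzero in general. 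You also assert that this operator twists the coefficient ring $H^*_\bbT(\pt)$; in fact it is $H^*_\bbT(\pt)$-linear (see Section~\ref{sec: springer}). You may be conflating the Demazure--Lusztig action with the left Weyl group action $w^L$ induced by the $G$-action on $\calB$: that one \emph{is} multiplicative and does twist equivariant parameters, but it is a different operator.

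Since the classical piece is not multiplicative, the strategy of declaring the classical part settled and then checking only the quantum corrections cannot work; the content of the theorem is precisely that the quantum corrections cancel the classical failure of multiplicativity, and indeed $T_{s_i}$ cannot be specialized to $q=0$ because it creates poles there. The paper proves this via the stable basis $\{\Stab_-(w)\}$. One first verifies the clean identities $T_u(\Stab_-(w))=(-1)^{\ell(u)}\Stab_-(wu^{-1})$ and $T_u(\Delta_\lambda)=\Delta_{u\lambda}$ (Lemma~\ref{lem:TDel}), and then uses the explicit formula for $\Delta_\lambda*\Stab_-(w)$ from \cite{MR3439689} (equation~\eqref{eq:qChe}) to check directly, by a case split on the sign of $u^{-1}\alpha$, that $T_u(\Delta_\lambda*\Stab_-(w))=T_u(\Delta_\lambda)*T_u(\Stab_-(w))$. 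Your closing suggestion of rewriting the Chevalley formula in a manifestly $W$-equivariant form is in fact what this is: \eqref{eq:qChe} is that rewriting, and the stable basis is the ingredient that makes it work. Generation by divisors then finishes the argument.
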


Observe that the usual Demazure--Lusztig operator interacts with the equivariant cup product via Leibniz-type formulas \cite[\S{}12]{MR1649626}, which is never a ring automorphism except for the trivial case. 
As the quantum Demazure--Lusztig operators that we have defined create a pole at the origin of the quantum variables, they cannot be projected down to the equivariant cohomology. 
The key ingredient in our proof is the stable basis introduced by Maulik and Okounkov \cite{MR3951025}.
We achieve this aim by direct computations using an explicit formula about the multiplication of a divisor class with the stable basis due to the second named author \cite{MR3439689}. The proof uses some crucial structures appearing in the cotangent bundle situation.

As an application of the above theorem, we obtain a ring presentation for $QH^*_{\bbT}(T^*\calB)$.
For any dominant weight $\lambda$, we define in \eqref{eq:defofThetalambda} a square matrix $\Theta(\lambda)$ with values in $QH^2_{\bbT}(T^*\calB)$, inspired from the quantum Chevalley formula. 
It leads to an effective way of finding relations in the equivariant quantum cohomology, due to an observation in communicative algebra as in Lemma \ref{lem: relation}. 
Via a Deformation Principle in Propositon \ref{Mainprop}, 
we obtain our second main result.

\begin{Thm}[Theorem \ref{qHpre}]\label{ThmB}
The $\bbT$-equivariant quantum cohomology $QH_{\bbT}^*(T^*\calB)$ is generated by divisors with the relations 
$\operatorname{tr}\big(\Theta(\lambda)^k\big)=\sum_{\mu\in W\!\lambda} \mu^k$
for all dominant weights $\lambda$ and all $k\geq 1$. 
\end{Thm}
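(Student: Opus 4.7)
My plan is to apply the Deformation Principle of Proposition \ref{Mainprop}, which reduces Theorem~\ref{ThmB} to two tasks: (i) check that divisors generate $QH^*_\bbT(T^*\calB)$ as an algebra over $\calO(T^\vee_{\reg})$ and that the classical ($q=0$) specialization of the proposed relations already presents $H^*_\bbT(T^*\calB)$; and (ii) verify that the trace identities $\mathrm{tr}(\Theta(\lambda)^k)=\sum_{\mu\in W\lambda}\mu^k$ actually hold in the quantum ring.

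Task (i) is essentially classical. The equivariant cohomology $H^*_\bbT(T^*\calB)\cong H^*_\bbT(\calB)$ is generated by divisors and admits the Borel presentation. At $q=0$, the matrix $\Theta(\lambda)$ diagonalizes with eigenvalues the Weyl orbit $W\lambda$, so the proposed identities become statements about power sums of these eigenvalues. As $\lambda$ ranges over dominant weights and $k$ over positive integers, these power sums generate the ideal defining the Borel presentation, and the commutative-algebra observation of Lemma \ref{lem: relation} packages this into precisely the hypothesis required by the Deformation Principle.

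The heart of the argument is task (ii). By Newton's identities, the full family of trace identities is equivalent to the single statement that the characteristic polynomial of $\Theta(\lambda)$ equals the classical one, $\prod_{w\in W}(t-w\lambda)$. The strategy is to exploit Theorem 1.1: since each quantum Demazure--Lusztig operator $T_w$ is a ring automorphism of $QH^*_\bbT(T^*\calB)$, its entrywise extension to matrices preserves characteristic polynomials after twisting the coefficients by the $W$-action on $\calO(T^\vee_{\reg})$. Given that $\Theta(\lambda)$ arises from the quantum Chevalley formula, which is itself $W$-covariant, one expects a conjugation relation of the form $T_w\,\Theta(\lambda)\,T_w^{-1}=\Theta(w\lambda)$. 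This forces the coefficients of $\mathrm{char}(\Theta(\lambda))$ to be Weyl-invariant elements of the base ring, and the classical limit then pins down their precise values.

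The main obstacle is establishing the intertwining between $T_w$ and $\Theta(\lambda)$ precisely, while controlling the poles the quantum D--L operators introduce along the root hyperplanes of $T^\vee_{\reg}$. Should this step prove delicate, a cleaner alternative is to argue generically in the localization $QH^*_\bbT(T^*\calB)\otimes_{\calO(T^\vee_{\reg})}\Frac\calO(T^\vee_{\reg})$, where one expects the quantum ring to become \'etale over the base and to split as a product of characters on which $c_1(\calL_\lambda)$ acts by scalars in the orbit $W\lambda$; the trace identities then follow directly. With (ii) established, the Deformation Principle yields Theorem~\ref{ThmB}.
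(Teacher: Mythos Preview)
Your overall framework---verify the trace relations hold in $QH^*_\bbT(T^*\calB)$, then invoke a deformation-in-$\hbar$ argument to conclude they generate all relations---matches the paper. (A minor point: the paper's Deformation Principle is Proposition~\ref{DefPrin}, not Proposition~\ref{Mainprop}; and the relevant ``classical limit'' is $\hbar\to 0$, at which $\Theta(\lambda)$ becomes diagonal with entries $D_{u\lambda}$. At $q\to 0$ the matrix is only triangular, and $q=0$ is not even a point of $T^\vee_{\reg}$.)

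The genuine gap is in task (ii). Your primary strategy is: the automorphism property of $T_w$ (Theorem~\ref{Swisring}) implies the characteristic polynomial of $\Theta(\lambda)$ has $T_w$-invariant coefficients, and then the classical limit determines them. But $T_w$-invariance in $QH^*_\bbT(T^*\calB)$ does \emph{not} force an element into the base ring. Even after imposing left $W$-invariance (so the trace lies in $QH^*_\bbG(T^*\calB)\cong \calO(T^\vee_{\reg}\times\ft)[\hbar]$), the $T_w$-invariants are the diagonal $W$-invariants $\calO\big((T^\vee_{\reg}\times\ft)/W\big)[\hbar]$, which is far larger than the image of $H^*_\bbG(\pt)$ under the structure map. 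In fixed degree this is still infinite-dimensional over $\bbC$, so knowing the $\hbar=0$ (or $q\to 0$) value cannot single out the element. Your claimed intertwining ``$T_w\Theta(\lambda)T_w^{-1}=\Theta(w\lambda)$'' is also not well-posed: $\Theta(\mu)$ is only defined for (anti-)dominant $\mu$, and what Lemma~\ref{lem:Theta} actually gives is that applying $T_u$ entrywise to $\Theta(\lambda)$ yields a permutation-conjugate of the \emph{same} matrix $\Theta(\lambda)$---precisely the invariance you want, but no more.

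What the paper does instead is to \emph{diagonalize} $\Theta(\lambda)$ explicitly. It builds an eigenvector matrix $\Sigma(\lambda)$ out of (left-$W$-translates of quantum-DL-translates of) the averaged stable basis class $\varsigma_\lambda$, and proves $\Theta(\lambda)*\Sigma(\lambda)=\Sigma(\lambda)*\Xi(\lambda)$ with $\Xi(\lambda)=\operatorname{diag}(u\lambda)_{u\in W^\lambda}$ (Proposition~\ref{MB=BN}); this is exactly where Theorem~\ref{Swisring} is used, to propagate the single computation for the $(1,1)$-entry (a direct application of the quantum Chevalley formula \eqref{eq:qChe}) to all other entries. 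Then it shows $\det\Sigma(\lambda)$ is a unit after a suitable localization (Proposition~\ref{detLemma}), using the commutative-algebra Lemma~\ref{lem: relation}---which is about invertibility of an eigenvector matrix, not about the Borel presentation as you suggest. This is precisely the ``split as a product of characters'' that you mention as a fallback but do not carry out; the stable basis provides the characters explicitly, and the argument for invertibility is the nontrivial step.
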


The above ring representation has an application in the study of the Calogero--Moser system, which is an integrable system arising from the one-dimensional many-body problems (see e.g. \cite{MR2296754, MR3817553} and references therein). 
Combining Theorem \ref{ThmB} and \cite[Theorem 3.2]{MR2782198}, we obtain an explicit description of the classical trigonometric Calogero--Moser map in Corollary \ref{thm:classicalcm}. 

We are going to provide an explicit ring presentation of $QH_{\bbT}^*(T^*\calB)$ in all classical types. Instead of using the relations in Theorem \ref{ThmB} directly, we compute the characteristic polynomial of $\Theta(\lambda)$.
For instance, in type $A_{n-1}$, 
the flag variety $\calB=\mathcal{F}\ell_n$ parameterizes complete flags in $\bbC^n$ and 
the Springer resolution $T^*\mathcal{F}\ell_n$ parameterizes pairs $(A,\phi)$ for $A$ an $n\times n$ nilpotent matrix and $\phi\in \mathcal{F}\ell_n$ a complete flag of $A$-invariant subspaces.
Among the family of $\Theta(\lambda)$, We can find one matrix of the form
\begin{equation}\label{intro:eq:MatA}
\left[
\begin{matrix}
\chi_1 &
\frac{\hbar}{1-q_1/q_2}&
\frac{\hbar}{1-q_1/q_3}&
\cdots &
\frac{\hbar}{1-q_1/q_n}\\[1ex]
\frac{\hbar}{1-q_2/q_1} &
\chi_2 &
\frac{\hbar}{1-q_2/q_3} &
\cdots &
\frac{\hbar}{1-q_2/q_n}\\[1ex]
\frac{\hbar}{1-q_3/q_1} &
\frac{\hbar}{1-q_3/q_2} &
\chi_3 &
\cdots &
\frac{\hbar}{1-q_3/q_n}\\[1ex]
\vdots&\vdots&\vdots&\ddots&\vdots\\[1ex]
\frac{\hbar}{1-q_n/q_1} &
\frac{\hbar}{1-q_n/q_2} &
\frac{\hbar}{1-q_n/q_3} &
\cdots &
\chi_n
\end{matrix}\right],
\end{equation}
where
$\chi_i=x_i+\hbar\sum_{a<i}
\frac{q_a/q_i}{1-q_a/q_i}
-\hbar\sum_{i<b}
\frac{q_i/q_b}{1-q_i/q_b}$. 
The matrix \eqref{intro:eq:MatA} happens to coincide with the one in \cite[\S{}2.8]{MR2296754}
The presentation for the $\bbT$-equivariant quantum cohomology of $T^*\calB$ can be stated as follows,

\begin{Thm}[Theorem \ref{Th:typeApre}] The torus-equivariant quantum cohomology of $T^*\mathcal{F}\ell_n$ is generated by divisors $x_1,\ldots,x_n$ with relations $\mathcal{E}_k(\chi)-e_k(t)$ for $k=1,\ldots,n$. Here $\mathcal{E}_k(\chi)$ are the coefficients of the characteristic polynomial of the matrix \eqref{intro:eq:MatA}.
\end{Thm}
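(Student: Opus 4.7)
The plan is to specialize Theorem \ref{ThmB} to a single dominant weight $\lambda=\omega_1$ (the first fundamental weight), convert the resulting trace identities into characteristic-polynomial relations via Newton's identities, and close the argument with a rank count over the coefficient ring.

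\textbf{Step 1: produce the relations.} For $\lambda=\omega_1$, the matrix $\Theta(\omega_1)$ is exactly \eqref{intro:eq:MatA}, and the Weyl orbit $W\!\omega_1$ equals $\{t_1,\ldots,t_n\}$. Theorem \ref{ThmB} then gives, for every $k\ge 1$,
\[
\operatorname{tr}\bigl(\Theta(\omega_1)^k\bigr)=\sum_{i=1}^{n}t_i^{\,k}=p_k(t).
\]
The left-hand side is the $k$-th power sum of the eigenvalues of $\Theta(\omega_1)$, so working over a characteristic-zero coefficient ring, Newton's identities translate this infinite family of power-sum identities into the finite family
\[
\mathcal{E}_k(\chi)=e_k(t),\qquad k=1,\ldots,n,
\]
and conversely. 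Hence the relations claimed in Theorem \ref{Th:typeApre} hold in $QH^*_{\bbT}(T^*\mathcal{F}\ell_n)$.

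\textbf{Step 2: a surjection between two $R$-modules.} Write $R=H^*_{\bbT}(\pt)\otimes\calO(T^\vee_{\reg})$ and
\[
A:=R[x_1,\ldots,x_n]\big/\bigl(\mathcal{E}_k(\chi)-e_k(t)\,:\,1\le k\le n\bigr).
\]
Since Theorem \ref{ThmB} asserts that $QH^*_{\bbT}(T^*\mathcal{F}\ell_n)$ is generated over $R$ by the divisors $x_1,\ldots,x_n$, Step 1 produces a surjection $\Phi\colon A\twoheadrightarrow QH^*_{\bbT}(T^*\mathcal{F}\ell_n)$ of $R$-modules.

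\textbf{Step 3: rank count.} The target $QH^*_{\bbT}(T^*\mathcal{F}\ell_n)$ is a free $R$-module of rank $n!=\dim H^*(\mathcal{F}\ell_n)$ by the very definition of the equivariant quantum cohomology as a deformation of $H^*_{\bbT}(T^*\mathcal{F}\ell_n)$. For the source, inspect the matrix \eqref{intro:eq:MatA}: every off-diagonal entry lies in $R$ and contains no $x$-variable, while each diagonal entry $\chi_i$ is $x_i$ plus an element of $R$. Writing $\Theta(\omega_1)=\operatorname{diag}(x_1,\ldots,x_n)+M$ with $M\in\mathrm{Mat}_n(R)$ and expanding the characteristic polynomial, one obtains
\[
\mathcal{E}_k(\chi)=e_k(x_1,\ldots,x_n)+(\text{terms of $x$-degree strictly less than }k).
\]
Thus the relations $\mathcal{E}_k(\chi)-e_k(t)$ have leading $x$-forms $e_1(x),\ldots,e_n(x)$, which form a regular sequence in $R[x_1,\ldots,x_n]$ (their quotient is $R$ tensored with the coinvariant algebra, a free $R$-module of rank $n!$). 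Consequently $A$ is itself a free $R$-module of rank $\prod_{k=1}^{n}k=n!$, and the surjection $\Phi$ between two free $R$-modules of equal rank is an isomorphism.

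\textbf{Main obstacle.} The delicate point is not the Newton-to-characteristic-polynomial conversion but rather justifying that the relations coming from a single dominant weight $\omega_1$ already exhaust the presentation, since Theorem \ref{ThmB} a priori produces further relations for each other dominant $\lambda$. I do not attempt to derive those relations algebraically from the characteristic-polynomial relations; instead the rank argument guarantees that they automatically hold in $A$ via the isomorphism $A\simeq QH^*_{\bbT}(T^*\mathcal{F}\ell_n)$. The essential computational input is the leading-form statement above, which hinges on the structural feature of \eqref{intro:eq:MatA} that the $x$-variables appear only on the diagonal.
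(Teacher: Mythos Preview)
Your argument is correct and reaches the same conclusion, but it diverges from the paper's in two places. To obtain $\mathcal{E}_k(\chi)=e_k(t)$, the paper does not pass through Newton's identities: it uses the conjugation $\Sigma^{-1}*M(\chi)*\Sigma=\operatorname{diag}(t_1,\dots,t_n)$ (Propositions~\ref{MB=BN} and~\ref{detLemma}) to equate characteristic polynomials directly. For sufficiency, the paper invokes the Deformation Principle (Proposition~\ref{DefPrin}), an $\hbar$-filtration argument: modulo $\hbar$ the relations reduce to the Borel relations $e_k(x)-e_k(t)$, and an induction on $\hbar$-degree lifts. You instead use the $x$-degree filtration, exploiting that the $x_i$ appear only on the diagonal of \eqref{intro:eq:MatA}. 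Both are filtered-deformation arguments; the paper's version is type-independent, while yours is more elementary but tied to this particular matrix shape. Two small points: from the leading-form observation you only directly conclude that $A$ is \emph{generated} by $n!$ elements, and freeness then follows a posteriori from the surjection onto the free rank-$n!$ target (compose $R^{n!}\twoheadrightarrow A\twoheadrightarrow R^{n!}$ and use that a surjective endomorphism of a finite free module is an isomorphism), so the order of deductions in Step~3 should be adjusted; and the matrix \eqref{intro:eq:MatA} is $\Theta(-\mathbf{e}_1)$ for the anti-dominant weight $-\mathbf{e}_1$, so strictly you are invoking Proposition~\ref{Mainprop} (stated for $\lambda\in X^*(T)_\pm$) rather than Theorem~\ref{ThmB} as written.
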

 
We will give a combinatorial description of $\mathcal{E}_k(\chi)$ in Theorem \ref{combforEk} via matching over $k$-subsets of $\{1,\ldots,n\}$, which coincides with Feynman's elementary computation \cite{Polychronakos2019}. 
The computations for other classical types are given in Theorem \ref{prefortypeBC} (type $B$ and type $C$) and Theorem \ref{prefortypeD} (type $D$). 

There is another application of our explicit description of $QH^*_{\bbT}(T^*\calB)$. As explained in \cite[\S{}8]{MR2782198}, we can recover $QH^*_T(\calB)$ by taking the Toda limit. In the context of quantum Schubert calculus and mirror symmetry, it is an important problem to provide a precise description of the ring presentation of the ($T$-equivariant) quantum cohomology of $\calB$ or, more generally, of a partial flag variety. 
This problem has been solved for partial flag varieties of type $A$ \cite{MR1328256, MR1337131, MR1695799} and for the complete flag variety $\calB$ of general Lie type by Kim \cite{MR1680543}. However, a precise description for partial flag varieties of non-$A$ type in general is not yet available. 
By taking the Toda limit, we are able to reobtain the presentation of $QH^*_{T}(\calB)$ for classical Lie types and discuss its connection with Kim's description.
For type $A_{n-1}$, the matrix \eqref{intro:eq:MatA} (after conjugation) converges, under the Toda limit, to the well-known tridiagonal matrix in quantum Schubert calculus:
\begin{equation}\label{intro:eq:MatAcomp}
\left[
\begin{matrix}
x_1   & -1  & 0 & \cdots & 0\\[1ex]
q_1/q_2 & x_2  &-1 & \cdots & 0\\[1ex]
0    & q_2/q_3 &x_3 & \cdots & 0\\[1ex]
\vdots&\vdots&\vdots&\ddots&\vdots\\[1ex]
0& 0& 0& \cdots & x_n
\end{matrix}\right].
\end{equation}
This recovers the presentation obtained by Givental and Kim \cite{MR1328256}. 
It would be interesting to establish a direct (type-free) connection between both descriptions. 
Our approach to finding the ring presentation of $QH^*_{\bbT}(T^*\calB)$ may be generalized to the cotangent bundle of a partial flag variety. Hence, it is potentially useful in solving the aforementioned problem for the partial flag varieties.

The paper is organized as follows. 
In Section \ref{sec:Pre}, we review the background of equivariant quantum cohomology of the Springer resolution $T^*\calB$. 
In Section \ref{sec:Aut}, we define quantum Demazure--Lusztig operators and show that the operators act on the equivariant quantum cohomology of $T^*\calB$ by ring automorphisms. 
In Section \ref{sec:Pres}, we give the ring presentation of $QH^*_{\bbT}(T^*\calB)$, as well as an explicit description of the classical trigonometric Calogero--Moser system. 
In Section \ref{sec:ABCD}, we carry out computations of the ring presentation of $QH^*_{\bbT}(T^*\calB)$ for all classical Lie types. 
Finally, in Appendix \ref{sect:Todalimit} and Appendix \ref{sec:Acomb}, we discuss the ring presentation of $QH^*_T(\calB)$ obtained by taking the Toda limit and provide some details for the computations of $QH^*_{\bbT}(T^*\calB)$ in type $A_{n-1}$. 

\subsection*{Acknowledgement}
The authors would like to thank 
Pavel Etingof, Michael Finkelberg, Michael McBreen, Leonardo C. Mihalcea, Andrei Okounkov, Nicholas Proudfoot, Peng Shan, Kirill Zainouline, and Yehao Zhou for helpful discussions and valuable comments.
C. Li is partially supported by NSFC Grants 12271529 and 11831017. C. Su is partially supported {by a startup grant from YMSC, Tsinghua University.}
R. Xiong is partially supported by the NSERC Discovery grant RGPIN-2015-04469, Canada.

\section{Preliminaries}\label{sec:Pre}
In this section, we review some basic results about the quantum cohomology of $T^*\calB$.

\subsection*{Notations}
Let $G$ be a complex, semi-simple, simply connected group with Lie algebra $\mathfrak{g}$, and $T\subset G$ be a maximal torus with Lie algebra $\mathfrak{t}$. Let $R\subset \mathfrak{t}^*$ be the root system of $(\mathfrak{g}, \mathfrak{t})$. 
Fix a base $\Pi=\{\alpha_i\}_{i=1}^r\subset R$ of simple roots, and let $\{\omega_i\}_{i=1}^r\subset \mathfrak{t}^*$ denote the fundamental weights where $r$ is the rank of $G$. 
Let $R^\vee=\{\alpha^\vee\mid \alpha\in R\}\subset \mathfrak{t}$ denote the set of coroots. Since $G$ is simply connected, we can  canonically identify the (co)character lattice with the weight (resp. coroot) lattice:
$$X^*(T)=\operatorname{Hom}(T, \mathbb{C}^*)\cong \operatorname{Span}_{\mathbb{Z}}\{\omega_i\}_{i=1}^r,\quad \operatorname{Hom}(\mathbb{C}^*, T)\cong \operatorname{Span}_{\mathbb{Z}}R^\vee.$$

Unless otherwise stated, by $\alpha>0$ we will always mean an element $\alpha$ in the set $R^+$ of positive roots.
The associated reflection $s_\alpha: \mathfrak{t}^*\to\mathfrak{t}^*$ is defined by $s_\alpha(\lambda)=\lambda-\langle \lambda, \alpha^\vee\rangle \alpha$, where $\langle \cdot, \cdot\rangle: \mathfrak{t}^*\times \mathfrak{t}\to \mathbb{C}$ is the natural pairing. We will also denote by $s_\alpha$ the induced reflection on $\mathfrak{t}$.
The Weyl group $W$ is generated by simple reflections $s_i=s_{\alpha_i}$, for $i=1,\ldots,r$. It is equipped with the Bruhat order $\geq$, and admits a standard length function $\ell: W\to \mathbb{Z}_{\geq 0}$. 
Let $X^*(T)_+$ denote the set of dominant weights in $X^*(T)$. 
For any $\pm\lambda\in X^*(T)_+$ we denote by $W_\lambda$ the subgroup of $W$ that stabilizes $\lambda$, which is generated by simple reflections $s_i$ with $\langle \lambda, \alpha_i^\vee\rangle=0$. Each coset $uW_\lambda$ in $W/W_\lambda$ has a unique minimal length representative, denoted as $\bar u$.
Therefore $W/W_\lambda$ can be identified with the subset $W^\lambda\subset W$ of minimal length representatives. 

The \emph{degenerate affine Hecke algebra} $\calH_\hbar$ is a graded $\mathbb{C}[\hbar]$-algebra generated by $\{x_\lambda\}_{\lambda\in \ft^*}$ and $W$ such that
\begin{enumerate}
\item 
  $x_{c\lambda+\mu}=c x_\lambda+x_{\mu}$, $x_\lambda x_\mu=x_\mu x_\lambda$, $\forall\lambda,\mu\in \ft^*, \forall c\in \mathbb{C}$;
\item
	the elements $w\in W$ form the Weyl group inside $\calH_\hbar$.
\item 
  $s_ix_\lambda-x_{s_i(\lambda)}s_i=\hbar\langle\alpha_i^\vee,\lambda\rangle$, $\forall \lambda\in \ft^*, \forall i =1,\ldots,r$;
\end{enumerate}
The grading on $\calH_\hbar$ is defined by
 $\deg x_\lambda=2, \deg w=0, \deg \hbar=2$.

 Let $T^\vee:=\bbC^*\otimes_{\bbZ}X^*(T)$ be the complex dual torus, and $T^\vee_{\reg}$ be the complement of all the root hyperplanes $\{q^{\alpha^\vee} = 1\}_{\alpha\in R^+}$. Here $q^{\alpha^\vee}$ is viewed as a function over $T^\vee$, and $q^{\alpha^\vee}=\prod_i (q^{\alpha^\vee_i})^{a_i}$ for $\alpha^\vee=\sum_{i}a_i\alpha^\vee_i$. In the present paper, we will mainly take coefficients in
\begin{equation}\label{eq:defOTreg}
\calO(T^\vee_{\reg})=\mathbb{C}\big[\,q^{\alpha^\vee}, \tfrac{1}{1- q^{\alpha^\vee}}\,\big|\,\alpha\in R\,\big].
\end{equation}
For any weight $\lambda\in X^*(T)$,  we will consider the linear function ${p}_\lambda$ in $\mathcal{O}(\mathfrak{t})$
and the differential operator $\partial_\lambda$ on $\calO(T^\vee_{\reg})$,
defined respectively by
\begin{equation}\label{eqn:partial}
{p}_\lambda(\alpha^\vee)=\langle \lambda, \alpha^\vee\rangle,\qquad \partial_{\lambda}q^{\alpha^\vee}=\langle\lambda,\alpha^\vee\rangle q^{\alpha^\vee}.
\end{equation}
 We will also consider actions by the groups
 \begin{equation}
   \mathbb{G}:=G\times \mathbb{C}^*,\quad \mathbb{T}:=T\times \mathbb{C}^*.
 \end{equation}

\subsection{Equivariant cohomology of Springer resolution}
\label{sec: springer}

The complete flag variety $\calB$ of $G$ parameterizes Borel subalgebras $\mathfrak{b}$ of $\mathfrak{g}$.
 Let $\mathcal{N}$ denote the cone of nilpotent elements in $\mathfrak{g}$, i.e. those $x\in \mathfrak{g}$ with $\mbox{ad}_x\in \mbox{End}(\mathfrak{g})$ being nilpotent.
The cotangent bundle $T^*\mathcal{B}$ is isomorphic to the subvariety
 $$\tilde{\mathcal{N}}=\{(A, \mathfrak{b})\mid A\in \mathcal{N}, A\in \mathfrak{b}\}\subset \mathcal{N}\times \mathcal{B}$$
 via the identification $T^*_{\mathfrak{b}}\calB \cong [\mathfrak{b}, \mathfrak{b}]=\mathfrak{n}$. The natural projection $\tilde{\mathcal{N}}\to \mathcal{N}$, or equivalently the composition
$T^*\mathcal{B}\mathbin{\cong}\tilde{\mathcal{N}}\to \mathcal{N}$, is a resolution of singularities, called the \emph{Springer resolution}.

Let $B$ denote the standard Borel subgroup of $G$, whose Lie algebra is spanned by $\mathfrak{t}$ and those root spaces parameterized by positive roots. With the identification $\mathcal{B}=G/B$, the cotangent bundle $T^*\mathcal{B}$ admits a natural $\mathbb{G}$-action with $G$-action on $\calB$ by left multiplication and
$\bbC^*$-action given by dilation. Precisely, for any $(g,z)\in \mathbb{G}$, the action is given by
$$(g,z)\cdot (xB,\xi)
=(gxB,z^{-1}g^*\xi),$$
where $\xi$ is a cotangent vector at $gB\in G/B$ and $g^* \xi$ is pull back of it to cotangent vector at $gxB$ induced by the left translation of $g$.
 
Let $Y$ denote $T^*\calB, \calB$, or a point $\mbox{pt}$, and  $A\leq \mathbb{G}$ be a complex reductive subgroup.  Let us describe the $A$-equivariant cohomology $H_A^*(Y)=H_A^*(Y, \bbC)$ of  $Y$.
Denote also by $\hbar$  the standard generator of
 $H_{\mathbb{C}^*}^*(\mbox{pt})$.
We have
\begin{align*}
H_{T}^*(\pt) & = \Sym(\ft^*),&
H_{\mathbb{T}}^*(\pt) & = \Sym(\ft^*)[\hbar],\\
H_{G}^*(\pt) & = \Sym(\ft^*)^W,&
H_{\mathbb{G}}^*(\pt) & = \Sym(\ft^*)^W[\hbar].
\end{align*}
Note that $T^*\calB$ is a vector bundle over $\calB$, thus
\begin{align}
H^*_{\mathbb{G}}(T^*\calB)
& =H^*_{\mathbb{G}}(\calB)
=H^*_{G}(\calB)[\hbar],\label{eq:HGGBh}\\
\text{and \quad }H^*_{\mathbb{T}}(T^*\calB)
& =H^*_{\mathbb{T}}(\calB)
=H^*_{T}(\calB)[\hbar].\label{eq:HTGBh}
\end{align}
Every $\lambda\in X^*(T)$ defines a line bundle   $\calL_\lambda:=G\times_B\bbC_\lambda$   over $\calB$. The equivariant first Chern classes of $\calL_\lambda$ are denoted as: 
\begin{equation}
  D_{\lambda}^{G, \calB} = c_1^{G}(\mathcal{L}_{\lambda})\in H_G^2(\calB), \quad D_{\lambda}^{T, \calB} = c_1^{T}(\mathcal{L}_{\lambda}) \in H_T^2(\calB).
\end{equation}
Similarly, for $A\leq \mathbb{G}$, we denote by $D_{\lambda}^{A, T^*\calB}$ the $A$-equivariant first Chern class of the line bundle over $T^*\calB$ obtained via pullback of $\calL_\lambda$. Whenever there is no confusion, we will simply denote all classes as $D_\lambda$.

Let $\St:=T^*\calB\times_\calN T^*\calB$ denote the \emph{Steinberg variety}, which inherits a $\mathbb{G}$ action.
Let $H_*^{\mathbb{G}}(\St)$ denote the $\mathbb{G}$-equivariant Borel--Moore homology of the Steinberg variety, which has an associative algebra structure via convolution, see \cite{MR2838836}.
The equivariant cohomology $H^*_{\mathbb{G}}(T^*\calB)$ admits a canonical action by $H_*^{\mathbb{G}}(\St)$, and hence by the affine Hecke algebra $\calH_\hbar$, due to an isomorphism $\calH_\hbar\simeq H_*^{\mathbb{G}}(\St)$ of 
 $\bbC[\hbar]$-algebras proved by Lusztig \cite{MR972345}. 
The isomorphism can be explicitly described as follows.
\begin{enumerate} 
    \item For any $\lambda\in X^*(T)$, $x_{\lambda}\in \calH_\hbar$ is sent to the the diagonal push forward of $D_{\lambda}$ from $T^*\calB$ to $\St$.
In particular, for any $\gamma\in H_{\mathbb{G}}(T^*\calB)$, the action of $x_{\lambda}$ on $\gamma$ is given by the equivariant product with $D_{\lambda}$, i.e.
$$x_{\lambda}(\gamma) =D_{\lambda}\cdot \gamma.$$
    \item Let $\calP_i:=G/P_i$, where $P_i:=B\cup Bs_iB$ is the minimal parabolic subgroup containing $B$. 
The fundamental class of the conormal bundle of $\calB\times_{\calP_i}\calB$ inside $\calB\times \calB$ is a well defined element in $H_*^{\mathbb{G}}(\St)$, and it corresponds to $s_i-1$. 
\end{enumerate}
 The affine Hecke algebra $\calH_\hbar$ also acts on the $\mathbb{T}$-equivariant cohomology 
  $H^*_{\mathbb{T}}(T^*\calB)=H^*_{\mathbb{T}}(\pt)\otimes_{H^*_{\mathbb{G}
  }(\pt)}H^*_{\mathbb{G}}(T^*\calB)$.
 The operators $x_\lambda$, $w$ are all $H^*_{\mathbb{T}}(\pt)$-linear. Moreover, the operators $w\in W\subset \calH_\hbar$ are also referred to as 
 the  Demazure--Lusztig operators, see \cite{MR1649626}.

Due to Borel, there is an isomorphism, 
\begin{equation}\label{eq:BorelHGGB}
\calO(\ft)\stackrel{\sim}\longrightarrow H_G^*(\calB)
\end{equation}
which sends ${p}_{\lambda}$ to $D_{\lambda}$; moreover, the following holds.
\begin{Prop}[\cite{MR51508}, see also \cite{AFbook}]\label{Th:BorelTh}
We have the following ring presentation
\begin{align*}
H_T^*(\calB)
& =\mathbb{C}[\lambda,D_{\lambda}]_{\lambda\in X(T)}\big/\big<f(D_{\lambda})-f(\lambda)\mid f(\lambda)\in \Sym(\ft^*)^W\big>,
\end{align*}
 where $f(D_{\lambda})$ is obtained by substituting $\lambda$ by $D_{\lambda}$. 
\end{Prop}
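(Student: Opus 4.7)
The plan is to combine the $G$-equivariant Borel isomorphism \eqref{eq:BorelHGGB} with Chevalley's theorem and a rank count. First, I would verify the key relation $f(D_\lambda)=f(\lambda)$ in $H_T^*(\calB)$ for every $f\in \Sym(\ft^*)^W$. This follows from the commutative square of forgetful maps
$$
\begin{array}{ccc}
H_G^*(\pt) & \longrightarrow & H_G^*(\calB) \\
\downarrow & & \downarrow \\
H_T^*(\pt) & \longrightarrow & H_T^*(\calB)
\end{array}
$$
together with \eqref{eq:BorelHGGB}: under the identifications $H_G^*(\pt)=\Sym(\ft^*)^W$, $H_T^*(\pt)=\Sym(\ft^*)$, and $H_G^*(\calB)\cong \Sym(\ft^*)$ with $D_\lambda\leftrightarrow p_\lambda$, an element $f\in \Sym(\ft^*)^W$ maps rightward to $f(D_\lambda)$ and downward to $f(\lambda)$, so commutativity forces the two images in $H_T^*(\calB)$ to agree.

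This yields a well-defined ring homomorphism
$$
\psi\colon \mathbb{C}[\lambda, D_\lambda]_{\lambda\in X^*(T)}\big/\big\langle f(D_\lambda)-f(\lambda) : f\in \Sym(\ft^*)^W \big\rangle \longrightarrow H_T^*(\calB),
$$
sending each symbol $D_\lambda$ to the Chern class of $\calL_\lambda$ and each symbol $\lambda$ to its image in $H_T^*(\pt)$. Surjectivity of $\psi$ is a consequence of Leray--Hirsch applied to the $\calB$-bundle $ET\times_T\calB\to BT$: the classical (non-equivariant) Borel presentation shows that $H^*(\calB)=\Sym(\ft^*)/\langle \Sym(\ft^*)^W_+\rangle$ is already generated by the classes $\bar D_\lambda$, and lifting these generators equivariantly gives the $D_\lambda\in H_T^*(\calB)$.

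To prove injectivity, I would compare ranks over $H_T^*(\pt)=\Sym(\ft^*)$. The source of $\psi$ is naturally identified with the tensor product $\Sym(\ft^*)\otimes_{\Sym(\ft^*)^W}\Sym(\ft^*)$, which is free of rank $|W|$ over the first factor by Chevalley's theorem (stating that $\Sym(\ft^*)$ is itself free of rank $|W|$ over $\Sym(\ft^*)^W$). The target $H_T^*(\calB)$ is also a free $\Sym(\ft^*)$-module of rank $|W|=\dim_{\mathbb{C}}H^*(\calB)$, again by Leray--Hirsch. A surjection between free modules of equal finite rank over an integral domain must be an isomorphism, so $\psi$ is an isomorphism.

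The only genuinely nontrivial input is the surjectivity, which ultimately rests on the classical Borel theorem for the ordinary cohomology of $\calB$; the $G$-equivariant enhancement \eqref{eq:BorelHGGB}, Chevalley's structure theorem, and Leray--Hirsch are all standard. An alternative route via Atiyah--Bott localization to the fixed-point set $\calB^T\cong W$ with matching GKM-type conditions would give an equally short proof but requires more setup; the outline above is the most direct given the $G$-equivariant statement already in hand.
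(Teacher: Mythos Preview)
Your argument is correct and is essentially the standard proof of this classical result. Note, however, that the paper does not actually supply a proof of Proposition~\ref{Th:BorelTh}: it is stated with attribution to Borel \cite{MR51508} (see also \cite{AFbook}) and used as background input. So there is no ``paper's own proof'' to compare against; you have simply filled in the well-known argument behind the citation. The combination of the $G$-equivariant Borel isomorphism, Chevalley's freeness theorem, and a Leray--Hirsch rank count is exactly the expected route, and your sketch is accurate in all essential points.
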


\subsection{Calogero--Moser system}\label{sec:cm}
The Calogero--Moser system is a Hamiltonian system in the study of one-dimensional many-body problems. The system was originally written down in its rational version and was generalized to the trigonometric case and the elliptic case by Sutherland and Krichever, respectively (see e.g. \cite{MR2296754,MR3817553} and references therein).

For our purpose, we need the \emph{trigonometric Calogero--Moser system} mostly and it is an integrable system over $\calO(T^\vee_{\reg}\times \ft)[\hbar]$ with Hamiltonian defined as follows. For any non-degenerate $W$-invariant quadratic form $C\in \Sym^2(\ft^*)^W$, we define the Hamiltonian to be
\begin{equation}\label{equ:etaCcl}
C(p)-\hbar^2
\sum_{\alpha>0}\frac{C(\alpha^\vee,\alpha^\vee)}{(q^{\alpha^\vee/2}-q^{-\alpha^\vee/2})^2}
\in \calO(T^\vee_{\reg}\times \ft)[\hbar],
\end{equation}
where ${p}_{\lambda}$ is defined \eqref{eqn:partial} and naturally viewed as a function in $\calO(T^\vee_{\reg}\times \ft)$. 
The trigonometric Calogero--Moser system can be described by a $\bbC[\hbar]$-linear embedding
\begin{equation}\label{eq:gaugeMCtoQH}
  \eta_{\CM,\hbar}^{\cl}:\Sym(\ft^*)^W[\hbar]\hookrightarrow \calO(T^\vee_{\reg}\times \ft)[\hbar],
\end{equation}
whose image consists of Possion commuting functions characterized by
\begin{enumerate} 
\item for any $f\in \Sym(\ft^*)^W$, $f(p)$ appears as the leading term of $\eta^{\cl}_{\CM,\hbar}(f)$;
\item for any $C\in \Sym^2(\ft^*)^W $,
$\eta_{\CM,\hbar}^{\cl}(C)$ is the Hamiltonian defined in \eqref{equ:etaCcl}.
\end{enumerate}
The existence of such a map can be ensured by the construction of Dunkl operators. 
For any $\lambda\in X^*(T)$, the \emph{classical trigonometric Dunkl operator} is
\begin{align}\label{eq:cltriDunkop}
\Dun_{\lambda}&:=p_{\lambda}-\hbar\sum_{\alpha>0}\left<\lambda,\alpha^\vee\right>\frac{q^{\alpha^\vee}}{1-q^{\alpha^\vee}}s_\alpha \in \calO(T^\vee_{\reg}\times \ft)[\hbar]\rtimes \bbC[W].
\end{align}
For any polynomial $f(\lambda)\in \Sym(\ft^*)$, writing 
\[f(\Dun_{\lambda})=\sum_{w\in W} D(w,f,\hbar)\cdot w \in \calO(T^\vee_{\reg}\times \ft)[\hbar] \rtimes \mathbb{C}[W],\]
where $D(w,f,\hbar)\in \calO(T^\vee_{\reg}\times \ft)[\hbar]$, 
we define 
\[D(f,\hbar):=\sum_w D(w,f,\hbar)\in \calO(T^\vee_{\reg}\times \ft)[\hbar].\]
Then $\eta_{\CM,\hbar}^{\cl}(f) = D(f,\hbar)$ for $f\in \Sym(\ft^*)^W$.

Let $\calD(T^\vee_{\reg})$ denote the $\mathbb{C}$-algebra of differential operators on $T^\vee_{\reg}$.
The quantum version of the trigonometric Calogero--Moser system can be similarly described by 
\begin{align*}
\eta_{\CM,\hbar}:\Sym(\ft^*)^W[\hbar]\hookrightarrow\calD(T^\vee_{\reg})[\hbar]
\end{align*}
characterized by similar conditions and can be constructed similarly using (quantum) trigonometric Dunkl operators \cite{MR1627113,MR2769318}. 

For our purpose, we need a gauge transformation.
Let 
 \begin{equation}
   \delta=\prod_{\alpha>0}(q^{\alpha^\vee}-1).
 \end{equation}
This defines an automorphism $\vartheta$ over $\calD(T^\vee_{\reg})[\hbar]$ given by $\vartheta(x)=\delta^{\hbar}x\delta^{-\hbar}$.
By direct computation, we have
\begin{align*}
\vartheta(q^{\alpha^\vee})= q^{\alpha^\vee}
\quad\text{and}\quad
\vartheta(\partial_{\lambda})
= \partial_\lambda+\hbar\sum_{\alpha>0}
\frac{\left<\lambda,\alpha^\vee\right>q^{\alpha^\vee}}{1-q^{\alpha^\vee}}.
\end{align*}
This induces an automorphism $\vartheta^{\cl}$ of the Poisson algebra $\calO(T^\vee_{\reg}\times \ft)[\hbar]$ such that
\begin{align*}
\vartheta^{\cl}(q^{\alpha^\vee})= q^{\alpha^\vee}
\quad\text{and}\quad
\vartheta^{\cl}({p}_\lambda) = {p}_\lambda+
\hbar\sum_{\alpha>0}
\frac{\left<\lambda,\alpha^\vee\right>q^{\alpha^\vee}}{1-q^{\alpha^\vee}}.
\end{align*}
Let us denote
\begin{align}\label{eq:gaugeMCtoQD} 
\overline{\eta}_{\CM,\hbar}&=
\vartheta\circ \eta_{\CM,\hbar}, \qquad
\overline{\eta}^{\cl}_{\CM,\hbar} =
\vartheta^{\cl}\circ \eta^{\cl}_{\CM,\hbar}.
\end{align}

\subsection{Quantum cohomology}
We will study the $A$-equivariant quantum cohomology $QH_A^*(X)$ of a space $X\in \{\calB, T^*\calB\}$. Here $A\leq \mathbb{G}$ is a complex reductive subgroup. In particular for $X=T^*\calB$, we require that $A$ contains 
$\{e\}\times \mathbb{C}^*$, so that the $A$-fixed locus $X^A$ is always compact. 

Note that $G$ is simply connected. We have the identifications
\[\mbox{Span}_{\mathbb{Z}}R^\vee=\Hom(\bbC^*,T)\simeq H_2(X,\bbZ).\]
Every positive coroot $\alpha^\vee$ defines an $\SL_2$-subgroup $G_{\alpha^\vee}\subset G$, and then defines a rational curve $G_{\alpha^\vee} B/B\subset \calB\subset T^*\calB$. Therefore the cone $\Eff(X)$ of effective classes in $H_2(X,\bbZ)$ is generated by the positive coroots. 
For each effective curve class $\beta\in \Eff(X)\subset H_2(X,\bbZ)$, and $\gamma_1,\ldots,\gamma_k\in H_{A}(X)$, the $k$-point, genus $0$, equivariant \emph{Gromov--Witten invariants} are given by
\begin{equation*}
\left<\gamma_1,\ldots,\gamma_k\right>_{0,k,\beta}^X=\int_{[\overline{M}_{0,k}(X,\beta)]^{\mathrm{vir}}} \ev^*(\gamma_1\boxtimes \cdots \boxtimes \gamma_k),
\end{equation*}
where $[\overline{M}_{0,k}(X,\beta)]^{\mathrm{vir}}$ is the virtual fundamental class of the moduli space of $k$-pointed stable maps to $X$, see \cite{MR1677117} and \cite[section 4]{MR2782198}.
For $X=T^*\calB$ which is noncompact, the integral is defined via equivariant residue, see \cite{MR3184181}.
The \emph{quantum cohomology} 
$$QH_{A}^*(X)=(H_{A}^*(X)\otimes_{\bbC}\bbC
[\![q^{\beta}]\!]_{\beta\in \Eff(X)}, \,*\,)$$
is a deformation of the equivariant cohomology $H_A^*(X)$.
The quantum product $*$ is a $\mathbb{C}[\![q^{\beta}]\!]_{\beta\in \Eff(X)}$-bilinear product over $QH_{A}^*(X)$ such that
for any $\gamma_1,\gamma_2,\gamma_3\in H_{A}^*(X)$, we have
\[(\gamma_1*\gamma_2,\gamma_3)=\sum_{\beta\in \Eff(X)\cup\{0\}}q^\beta\langle \gamma_1,\gamma_2,\gamma_3\rangle_{0,3,\beta}^X.\]
Here $(\cdot,\cdot)$ is the Poincar\'e pairing, which can be evaluated using equivariant residue in the case of $T^*\calB$.

It is well-known that $QH_{A}^*(X)$ forms a graded commutative $H_{A}^*(\pt)$-algebra with identity $1$.
Here, the degree of $q^\beta$ is defined to be
$$\deg q^{\beta} = 2\left<-K_X,\beta\right>,$$
where $K_X$ is the canonical bundle of $X$. In particular,
the degree of $q^{\alpha^\vee}$ is zero in $QH^*_{A}(T^*\calB)$, since $K_{T^*\calB}$ is trivial.

 On the trivial vector bundle on $H^2(X)$ with fiber $H_A^*(X)$, there is a flat connection $\nabla$, called the \emph{Dubrovin connection} and defined by 
\[\nabla_\lambda:=\partial_{\lambda}-\lambda *, \quad\forall \lambda\in H^2(X),\]
where $\partial_{\lambda}$ is as given in \eqref{eqn:partial}, by identifying $H^2(X)$ with $\mathfrak{t}^*$.  The equivariant cohomology $QH_{A}^*(X)$ is then equipped with an $H_{A}^*(\pt)$-$\calD$-module structure, known as the \emph{$A$-equivariant quantum $\calD$-module} of $X$. The main results of \cite{MR2782198} are the following.

\begin{Prop}[{\cite[Theorem 3.2]{MR2782198}}]\label{thm:BMO}
\mbox{}
\begin{enumerate}[\quad\rm(1)]
\item \label{thm:BMO(1)}
  The quantum multiplication by $D_\lambda$ is given by
  \[x_\lambda+\hbar\sum_{\alpha>0}\left<\lambda,\alpha^\vee\right>\frac{q^{\alpha^\vee}}{1-q^{\alpha^\vee}}(s_\alpha-1).\]
\item
  The quantum connection of $T^*\calB$ can be analytically extended to $T^\vee_{\reg}$ and is  given by
  \[\nabla_\lambda=\partial_{\lambda}-x_\lambda-\hbar\sum_{\alpha>0}\left<\lambda,\alpha^\vee\right>\frac{q^{\alpha^\vee}}{1-q^{\alpha^\vee}}(s_\alpha-1).\]
  Moreover, the $H_{\mathbb{G}}^*(\pt)$-structure is given by the quantum Calogero--Moser map
  $\overline{\eta}_{\CM,\hbar}$. 
\item \label{thm:BMO(3)}
  We have an $\calO(T^\vee_{\reg})$-algebra isomorphism 
  \begin{equation}\label{eq:BMOisoQHGTGB}
  \calO(T^\vee_{\reg}\times \ft)[\hbar]\stackrel{\sim}\longrightarrow QH_{\mathbb{G}}^*(T^*\calB)
  \end{equation}
  sending ${p}_{\lambda}$ to $D_{\lambda}$.
  Moreover, the $H_{\mathbb{G}}^*(\pt)$-structure is given by the classical Calogero--Moser map
  $\overline{\eta}^{\cl}_{\CM,\hbar}$. 
\end{enumerate}
\end{Prop}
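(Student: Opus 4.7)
The content lies in part~\eqref{thm:BMO(1)}; the other two parts follow formally. Given the divisor formula, the Dubrovin connection of part~(2) is its definition $\nabla_\lambda=\partial_\lambda-D_\lambda\,\ast$ rewritten, and the analytic extension to $T^\vee_{\reg}$ is read off from the poles of $q^{\alpha^\vee}/(1-q^{\alpha^\vee})$; identification with $\overline{\eta}_{\CM,\hbar}$ then amounts to unwinding the gauge automorphism $\vartheta$ of Section~\ref{sec:cm}. For part~\eqref{thm:BMO(3)}, divisor classes generate $H^*_G(\calB)$ by Borel's theorem (Proposition~\ref{Th:BorelTh}), so the ring structure of $QH^*_{\bbG}(T^*\calB)$ is determined by part~\eqref{thm:BMO(1)}; passing to the classical limit of the Hamiltonians of the gauged Calogero--Moser system yields the desired isomorphism sending $p_\lambda$ to $D_\lambda$.

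The heart of part~\eqref{thm:BMO(1)} rests on the holomorphic symplectic structure of $T^*\calB$. For any genus-zero stable map $f:C\to T^*\calB$ the pullback $f^*\omega$ of the symplectic form is a holomorphic $2$-form on $C$, hence zero, so the image of $f$ is isotropic; being one-dimensional, it lies in a Lagrangian, and a short argument using the $\bbC^*$-dilation (which scales $\omega$ nontrivially) forces the image into the zero section $\calB\subset T^*\calB$. Since $\Eff(T^*\calB)$ is generated by positive coroots, the only irreducible rational curves representing a nonzero class have class $d\alpha^\vee$ with image the Schubert line $\bbP^1_\alpha=G_{\alpha^\vee}B/B$ (or a $W$-translate). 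This confines every nontrivial contribution to the moduli $\overline{M}_{0,k}(\calB,d\alpha^\vee)$, which is fibered over $G/P_\alpha$.

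Finally, one computes the three-point invariant $\langle D_\lambda,\gamma_1,\gamma_2\rangle_{0,3,d\alpha^\vee}^{T^*\calB}$ by virtual torus localization. The virtual class of $\overline{M}_{0,k}(T^*\calB,d\alpha^\vee)$ differs from its restriction to the $\calB$-moduli by the equivariant Euler class of the cotangent obstruction bundle, whose $\bbC^*$-weight $-\hbar$ cancels most multiple-cover contributions; the Aspinwall--Morrison calculation then sums the geometric series $\sum_{d\geq 1}q^{d\alpha^\vee}=q^{\alpha^\vee}/(1-q^{\alpha^\vee})$. The divisor axiom applied to $D_\lambda$ produces the factor $\langle\lambda,\alpha^\vee\rangle\hbar$, and comparing the evaluation maps at the two remaining marked points on $\bbP^1_\alpha$ yields the reflection $s_\alpha$; the $-1$ in $(s_\alpha-1)$ subtracts the classical cup product $D_\lambda\cup\gamma$ already built into $x_\lambda$. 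The principal obstacle is the multiple-cover bookkeeping, where the $-\hbar$ twist must be tracked carefully so that only the degree-one moduli contributes a nonzero residue for each $d$; a cleaner modern alternative rewrites the identity in the geometric R-matrix framework via stable bases, effectively reducing the general computation to a rank-one calculation on $T^*\bbP^1$.
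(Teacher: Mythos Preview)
The paper does not prove this proposition; it is quoted from Braverman--Maulik--Okounkov \cite[Theorem~3.2]{MR2782198} and used as a black box. There is no in-paper proof to compare against.

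That said, your sketch of the BMO argument contains a genuine gap. You assert that ``the only irreducible rational curves representing a nonzero class have class $d\alpha^\vee$ with image the Schubert line $\bbP^1_\alpha$ (or a $W$-translate).'' This is false: $\calB$ contains many irreducible rational curves that are not multiple covers of lines---for instance, smooth conics of class $\alpha_1^\vee+\alpha_2^\vee$ already in type $A_2$. That $\Eff(\calB)$ is generated by simple coroots tells you nothing about irreducibility of representatives, so your direct reduction to a rank-one computation on $T^*\bbP^1$ does not follow.

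What BMO actually do is deform $T^*\calB$ in a family of twisted cotangent bundles over $\mathfrak{t}^*$ (equivalently, work in the Grothendieck--Springer picture). The generic fiber is affine and carries no compact curves, so the purely-quantum part of $D_\lambda\ast$ is supported on the root hyperplanes; near each hyperplane the family degenerates along a single $\bbP^1$-direction, which is what forces the root-by-root structure and legitimately reduces the calculation to $SL_2$. Your observations about the Lagrangian constraint (curves land in the zero section) and the Aspinwall--Morrison multiple-cover sum are correct ingredients, but they enter only after this deformation step, not in place of it.
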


\begin{Rmk}
Part (\ref{thm:BMO(3)}) of the above proposition admits an equivalent version as follows. By introducing
\begin{equation}\label{eq:Deltacls}
\Delta_{\lambda} = D_{\lambda}+\hbar\sum_{\alpha>0}\left<\lambda,\alpha^\vee\right>\frac{q^{\alpha^\vee}}{1-q^{\alpha^\vee}},
\end{equation}
we have an $\calO(T^\vee_{\reg})$-algebra isomorphism as \eqref{eq:BMOisoQHGTGB}, which sends ${p}_{\lambda}$ to $\Delta_{\lambda}$. In this case, the $H_{\mathbb{G}}^*(\pt)$-structure is given by the classical Calogero--Moser map ${\eta}^{\cl}_{\CM,\hbar}$ (without gauge transformation). 
The relation is summarized in the following diagram
$$\xymatrix{
&&\mathcal{O}(T^\vee_{\reg}\times \ft^*)[\hbar]
\ar[dr]^{{p}_{\lambda}\mapsto D_{\lambda}}\\
H^*_{\mathbb{G}}(\pt) \ar[r]^-{\sim}& \Sym(\ft^*)^W[\hbar]
\ar[ur]^{\overline{\eta}_{\CM,\hbar}^{\cl}}
\ar[dr]_{\eta_{\CM,\hbar}^{\cl}}
&&QH^*_{\mathbb{G}}(T^*\calB)\\
&&\mathcal{O}(T^\vee_{\reg}\times \ft^*)[\hbar]
\ar[ur]_{{p}_{\lambda}\mapsto \Delta_{\lambda}}
\ar[uu]_{\vartheta^{\cl}}
}
$$
\end{Rmk}

\begin{Rmk}
Note that Proposition \ref{thm:BMO} (\ref{thm:BMO(3)}) shows that $QH^*_{\mathbb{G}}(T^*\calB)$
is generated by divisors $D_\lambda$ (or equivalently classes $\Delta_{\lambda}$). 
The isomorphism of \eqref{eq:BMOisoQHGTGB} is obtained after refining the equivariant quantum cohomology
\begin{equation}	
QH^*_{\mathbb{G}}(T^*\calB)=H^*_{\mathbb{G}}(T^*\calB)\otimes_{\bbC}\mathcal{O}(T^\vee_{\reg})
\end{equation}
by replacing the formal power series $\bbC[\![q^\beta]\!]_{\beta}$ with the function ring $\calO(T^\vee_{\reg})$.
In other words, the quantum product can be analytically extended to the ring $T^\vee_{\reg}$. 

The same argument also applies to $QH^*_{\mathbb{T}}(T^*\calB)$. That is, $QH^*_{\mathbb{T}}(T^*\calB)$
is generated by divisors $D_\lambda$ (or equivalently $\Delta_{\lambda}$) and we can realize it over
\begin{equation}	
QH^*_{\bbT}(T^*\calB)=H^*_{\bbT}(T^*\calB)\otimes_{\bbC}\mathcal{O}(T^\vee_{\reg}).
\end{equation}
These facts are crucial when defining Quantum Demazure--Lusztig operators.
\end{Rmk}

\section{Automorhisms of {$QH^*_{\mathbb{T}}(T^*\calB)$}}\label{sec:Aut}

Let us define the main objects of this paper. 

\begin{Def}[Quantum Demazure--Lusztig operators]
\label{defofQDLop}
For any $u\in W$, we define the \emph{quantum Demazure--Lusztig operator} $T_u$ to be the 
operator on $QH^*_{\mathbb{T}}(T^*\calB)=
H^*_{\mathbb{T}}(T^*\calB)\otimes \calO(T^\vee_{\reg})$ given by
\begin{equation}\label{equ:DL}
T_u= u\otimes u,
\end{equation}
where the first $u$ on the right-hand-side is an element of $\calH_{\hbar}$, i.e. a usual Demazure--Lusztig operator, 
and the second $u$ denotes the induced action on $\calO(T^\vee_{\reg})$ from that on $T^\vee_{\reg}$, namely 
 $u(q^{\alpha^\vee}):=q^{u\alpha^\vee}$. 
\end{Def}

This section is devoted to the following surprising theorem, which will play an important role in finding the presentation of $QH^*_{\mathbb{T}}(T^*\calB)$ in Section \ref{sec:Pres}. 

\begin{Th}\label{Swisring}
For any $u\in W$ and $\gamma_1,\gamma_2\in QH_{\bbT}^*(T^*\calB)$,
\begin{equation}\label{eq:Swisring}
T_u(\gamma_1 * \gamma_2) = T_u(\gamma_1)*T_u(\gamma_2).
\end{equation}
As a result, $T_u$ is a ring automorphism with respect to the quantum product.
\end{Th}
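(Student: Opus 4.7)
The plan is to reduce the identity \eqref{eq:Swisring} to its simplest non-trivial instance via the ring generation by divisors, and then to verify that instance by a direct computation using the Maulik--Okounkov stable basis.

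As a first reduction, since $W$ is generated by simple reflections and the composition of ring automorphisms is itself a ring automorphism, it suffices to treat $u = s_i$ a simple reflection. By the Remark following Proposition \ref{thm:BMO}, the algebra $QH^*_{\bbT}(T^*\calB)$ is generated by the divisor classes $\{D_\lambda\}_{\lambda\in X^*(T)}$ over its coefficient ring. Since $T_{s_i} = s_i\otimes s_i$ is $H^*_{\bbT}(\pt)$-linear (as the Demazure--Lusztig operator $s_i\in\calH_\hbar$ is, by the discussion in Section \ref{sec: springer}) and $s_i$-semilinear over $\calO(T^\vee_{\reg})$, a straightforward induction on the number of divisor factors, combined with the associativity of $*$, reduces \eqref{eq:Swisring} to the single identity
\[ T_{s_i}(D_\lambda * \gamma) \;=\; T_{s_i}(D_\lambda) * T_{s_i}(\gamma) \qquad \text{for all } \lambda\in X^*(T),\ \gamma\in QH^*_{\bbT}(T^*\calB). \]

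To prove this reduced identity, I would expand both sides using the quantum Chevalley formula of Proposition \ref{thm:BMO}(\ref{thm:BMO(1)}), which realizes quantum multiplication by $D_\lambda$ as the explicit operator $x_\lambda + \hbar\sum_{\alpha>0}\langle\lambda,\alpha^\vee\rangle\tfrac{q^{\alpha^\vee}}{1-q^{\alpha^\vee}}(s_\alpha-1)$ on $QH^*_{\bbT}(T^*\calB)$. The cleanest way to keep track of both sides is to test against the Maulik--Okounkov stable basis $\{\Stab(w)\}_{w\in W}$, using the explicit formula of \cite{MR3439689} for $D_\lambda * \Stab(w)$ in that basis together with the well-understood action of the Demazure--Lusztig operator $s_i$ on stable basis elements. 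After reindexing the sum over positive roots via $\alpha\mapsto s_i(\alpha)$, which permutes $R^+\setminus\{\alpha_i\}$ among itself and sends $\alpha_i$ to $-\alpha_i$, the verification reduces to elementary rational-function identities such as $\frac{q^{-\alpha_i^\vee}}{1-q^{-\alpha_i^\vee}} = -1 - \frac{q^{\alpha_i^\vee}}{1-q^{\alpha_i^\vee}}$, combined with the affine Hecke relation $s_i x_\lambda = x_{s_i\lambda}s_i + \hbar\langle\alpha_i^\vee,\lambda\rangle$ in $\calH_\hbar$.

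I expect the main obstacle to be conceptual rather than computational. Neither tensor factor of $T_{s_i}$ is multiplicative on its own: the classical Demazure--Lusztig operator only obeys a Leibniz-type rule with respect to the cup product, while the Weyl action on $\calO(T^\vee_{\reg})$ touches only the quantum parameters. The striking mechanism is that the ``$-1$'' produced by the singular $\alpha_i$-summand after reindexing precisely cancels the defect $\hbar\langle\alpha_i^\vee,\lambda\rangle$ coming from the affine Hecke relation, so that the defects of the two factors exactly compensate inside the deformed product. Checking that this cancellation propagates cleanly through the full sum over positive roots in the Chevalley formula is the technical heart of the argument, and it is here that the symplectic structure of $T^*\calB$ and the stable envelope formalism of Maulik--Okounkov become essential.
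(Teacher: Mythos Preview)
Your proposal is correct and follows essentially the same route as the paper. Both arguments reduce to verifying the identity when one factor is a divisor and the other is a stable basis element, then bootstrap using associativity and the fact that divisors generate the ring; and both carry out that verification by the explicit quantum Chevalley formula for $D_\lambda * \Stab_-(w)$ together with the known action of the Demazure--Lusztig operators on the stable basis. The only cosmetic differences are that the paper works with general $u\in W$ rather than reducing to simple reflections, and it passes to the shifted class $\Delta_\lambda = D_\lambda + \hbar\sum_{\alpha>0}\langle\lambda,\alpha^\vee\rangle\frac{q^{\alpha^\vee}}{1-q^{\alpha^\vee}}$, for which the Chevalley-type formula and the identity $T_u(\Delta_\lambda)=\Delta_{u\lambda}$ are slightly cleaner; with these choices the case-by-case matching (your reindexing step) becomes the paper's Cases B and C.
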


\begin{Eg}\label{eg:autoP1}Consider the case $G=SL(2,\bbC)$. 
Let $s:=s_\alpha\in W$ be the only simple reflection, $\varpi$ be the fundamental weight, and $q:=q^{\alpha^\vee}$ denote the unique primitive effective curve class. Note that $-D_{\varpi}=D_{-\varpi}\in H_{\bbT}^*(T^*\bbP^1)$ is the pullback of the equivariant first Chern class $c_1^{\bbT}(\calO(1))\in H_{\bbT}^*(\bbP^1)$.
By Proposition \ref{thm:BMO} (\ref{thm:BMO(1)}), 
\begin{equation*}
D_{\varpi}*D_{-\varpi} 
= D_{\varpi}D_{-\varpi}+\hbar\frac{q}{1-q} (D_{\varpi}-D_{-\varpi}-\hbar).
\end{equation*}
We have
\begin{align*}
T_s(D_{\varpi})*T_s(D_{-\varpi}) 
& = (D_{-\varpi}+\hbar)*(D_{\varpi}-\hbar)\\
& = 
D_{\varpi}D_{-\varpi}+\hbar\frac{1}{1-q}
(D_{\varpi}-D_{-\varpi}-\hbar),\\
T_s(D_{\varpi}*D_{-\varpi}) & = D_{\varpi}D_{-\varpi}+\hbar\frac{q^{-1}}{1-q^{-1}}(D_{-\varpi}-D_{\varpi}+\hbar)\\
& = D_{\varpi}D_{-\varpi}+\hbar\frac{1}{1-q}(-D_{-\varpi}+D_{\varpi}-\hbar).
\end{align*}
Thus 
$$T_s(D_{\varpi}*D_{-\varpi})
=T_s(D_{\varpi})*T_s(D_{-\varpi}).$$
\end{Eg}

\subsection{Stable basis}
The \emph{stable basis} was introduced by Maulik and Okounkov in their seminal work on quantum cohomology of Nakajima quiver varieties \cite{MR3951025}. It plays a crucial role in our proof of Theorem \ref{Swisring}.
Let us briefly recall the definition of the stable basis for the Springer resolution, see
\cite[Chapter~3]{MR3951025} or \cite{MR3595900} for more details.

Recall that the torus $\mathbb{T}$ acts on $T^*\calB$, and the
fixed points are $(wB,0)\in T^*\calB$ for $w\in W$. For any
$w\in W$ and $\gamma\in H_{\bbT}^*(T^*\calB)$, we denote by
$\gamma|_w$ the restriction of $\gamma$ to the fixed point $(wB,0)$.
The definition of stable envelopes depends on a choice of a Weyl
chamber in $\ft$.
For our purpose, we need to choose the negative Weyl chamber, for which we use $-$ in the subscript to indicate.
Let $B^-$ denote the opposite Borel subgroup.
The stable basis is characterized by the following proposition.
\begin{Prop}[\cite{MR3951025,MR3595900}]
\label{rem:stab}
 There exists a unique family of classes
 \[ \big\{\Stab_-(w)\in H_{\bbT}^{2 \dim \calB}(T^*\calB) \,\big|\, w\in
 W\big\} \] which satisfies the following properties:
\begin{enumerate}[\quad\rm(1)]
\item \label{def:stab(1)}
$\Stab_-(w)$ is supported on the union of conormal bundle of opposite Schubert varieties $B^-uB/B$ for $u\geq w$, i.e., $\Stab_-(w)|_u = 0$ unless $u \geq w$;
\item \label{def:stab(2)}
$\Stab_-(w)|_w=
 \prod\limits_{\alpha>0,w\alpha>0}(w\alpha-\hbar)\prod\limits_{\alpha>0,w\alpha<0}w\alpha$;
\item \label{def:stab(3)}
$\Stab_-(w)|_u$ is divisible by $\hbar$, for any $u>w$.
\end{enumerate}
Moreover, the stable basis forms a basis of $H_{\bbT}^*(T^*\calB)$ after inverting the equivariant parameters. 
\end{Prop}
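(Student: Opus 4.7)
The plan is to prove uniqueness by a degree-plus-divisibility argument at the fixed points, prove existence by an inductive construction using Hecke-algebra operators (or equivalently a Lagrangian correspondence), and deduce the basis property from the triangularity of the restriction matrix together with equivariant localization.

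For uniqueness, it suffices to show that any $\delta \in H^{2\dim\calB}_{\bbT}(T^*\calB)$ satisfying (1) with $\delta|_w = 0$ and $\hbar \mid \delta|_u$ for all $u > w$ must vanish. By equivariant localization, I only need $\delta|_u = 0$ for every $u \geq w$, and I would argue this by induction on $u$ in Bruhat order. Assuming $\delta|_{u'} = 0$ for all $w \leq u' < u$, the support condition (1) together with the inductive hypothesis forces $\delta$ to behave, near the fixed point $(uB,0)$, as a class supported on $\overline{N^*_{X^u}}$, so the self-intersection formula shows that $\delta|_u$ is divisible by the equivariant Euler class of the normal bundle to $\overline{N^*_{X^u}}$ in $T^*\calB$ at $(uB,0)$. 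A direct weight computation identifies this Euler class, up to sign, with $e_u := \prod_{\alpha>0,\,u\alpha>0}(u\alpha-\hbar)\prod_{\alpha>0,\,u\alpha<0} u\alpha$. Since $\deg \delta|_u = 2\dim\calB = \deg e_u$, we obtain $\delta|_u = c \cdot e_u$ for some scalar $c \in \bbC$. The crucial point is that $e_u|_{\hbar = 0} = \prod_{\alpha>0} u\alpha \neq 0$, so the divisibility $\hbar \mid \delta|_u$ forces $c = 0$, closing the induction.

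For existence, I would construct $\Stab_-(w)$ by descending induction on Bruhat order. The anchor $\Stab_-(w_0)$ can be taken as the pushforward of $1$ along the inclusion of the cotangent fiber over $w_0 B$, which manifestly satisfies (1)-(3). For the inductive step, for a simple reflection $s_i$ with $w < s_i w$, a suitably normalized application of the Demazure-Lusztig operator $s_i \in W \subset \calH_\hbar$ to $\Stab_-(s_i w)$ produces a candidate for $\Stab_-(w)$, and properties (1), (2), (3) can be checked via the explicit action of $\calH_\hbar$ on the localized fixed-point basis recalled in Section \ref{sec: springer}. Equivalently, the class may be realized geometrically as the Maulik-Okounkov stable envelope built from a Lagrangian correspondence with prescribed support and polarization, which by uniqueness agrees with the inductive construction.

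The basis property then follows formally: the restriction matrix $(\Stab_-(w)|_u)_{u,w \in W}$ is upper-triangular in Bruhat order by (1), with diagonal entries (2) that become invertible after inverting the equivariant parameters; combined with the equivariant localization theorem this shows that $\{\Stab_-(w)\}_{w \in W}$ is an $H^*_{\bbT}(\pt)_{\loc}$-basis of $H^*_{\bbT}(T^*\calB)_{\loc}$. I expect the main obstacle to be the careful handling of the inductive decomposition in the uniqueness argument, where one must justify that $\delta$ locally behaves as a class supported on $\overline{N^*_{X^u}}$, and, in the existence argument, the verification that condition (3) holds for the Hecke-operator-derived candidate---both of which rely on the polarization data intrinsic to the Maulik-Okounkov framework.
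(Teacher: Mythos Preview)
The paper does not supply its own proof of this proposition: it is stated with citations to Maulik--Okounkov \cite{MR3951025} and Su \cite{MR3595900} and used as a black box thereafter. So there is no ``paper's own proof'' to compare against; your outline is essentially the argument found in those references, and the overall strategy (uniqueness via support/degree/divisibility, existence via the Lagrangian correspondence or equivalently an inductive Hecke construction, basis property via triangularity plus localization) is the standard one.

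Two remarks on the details. First, in the existence step you write $w<s_iw$, but the action of $s_i\in\calH_\hbar$ recorded in the paper is $s_i\cdot\Stab_-(w)=-\Stab_-(ws_i)$, i.e.\ it shifts by \emph{right} multiplication; your induction should run along $w\leftrightarrow ws_i$ rather than $w\leftrightarrow s_iw$. Second, in the uniqueness step the passage from ``$\delta|_{u'}=0$ for all $w\le u'<u$'' to ``$\delta$ is, near $(uB,0)$, the pushforward of a class on $\overline{N^*_{X^u}}$'' is exactly the place where the attracting-set stratification is used in \cite{MR3951025}: one needs that the full attracting set $\bigcup_{v\ge w}\mathrm{Attr}(v)$ is closed and that removing the strata already shown to contribute zero leaves $\mathrm{Attr}(u)$ as the bottom stratum at $u$. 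You flag this as an obstacle, and indeed it is the only point where real geometry (rather than linear algebra at fixed points) enters; the cited references handle it via the properness of the moment-map-like projection defining the stable envelope.
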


\begin{Prop}[{\cite[Theorem 3.14]{MR3439689}}]\label{prop:quantum}
For $w\in W$ and $\lambda\in X^*(T)$, the following holds in $QH_{\bbT}^*(T^*\calB)$
  \begin{align*}
    D_\lambda*\Stab_-(w)&=w(\lambda)\Stab_-(w)-\hbar\sum_{\alpha>0,w\alpha>0}\left<\lambda,\alpha^\vee\right>\Stab_-(ws_\alpha)\\
    &\quad -\hbar\sum_{\alpha>0}\left<\lambda,\alpha^\vee\right>\frac{q^{\alpha^\vee}}{1-q^{\alpha^\vee}}\big(\Stab_-(w)+\Stab_-(ws_\alpha)\big).
  \end{align*}
\end{Prop}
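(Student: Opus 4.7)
The natural plan is to apply the Braverman--Maulik--Okounkov formula (Proposition \ref{thm:BMO}(\ref{thm:BMO(1)})) and thereby reduce the quantum multiplication identity to two classical computations of the $\calH_\hbar$-action on the stable basis. Substituting BMO gives
\[
D_\lambda * \Stab_-(w) = x_\lambda\, \Stab_-(w) + \hbar \sum_{\alpha > 0} \langle \lambda, \alpha^\vee \rangle\, \frac{q^{\alpha^\vee}}{1-q^{\alpha^\vee}}\, \big(s_\alpha - 1\big)\Stab_-(w),
\]
and comparing term by term with the target formula shows that it suffices to establish
\[
\text{(A)}\ \ D_\lambda \cdot \Stab_-(w) = w(\lambda)\Stab_-(w) - \hbar\!\!\sum_{\alpha > 0,\, w\alpha > 0}\!\!\langle \lambda, \alpha^\vee \rangle\, \Stab_-(ws_\alpha),
\]
\[
\text{(B)}\ \ s_\alpha\, \Stab_-(w) = -\Stab_-(ws_\alpha) \quad (\alpha > 0),
\]
where $x_\lambda$ acts by classical cup product with $D_\lambda$ and $s_\alpha$ denotes the Hecke algebra element acting on $H^*_\bbT(T^*\calB)$ through Lusztig's isomorphism.

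For (A), I would expand $D_\lambda \cdot \Stab_-(w) = \sum_{u \geq w} c_u\, \Stab_-(u)$ in the stable basis, which is valid after localizing equivariant parameters by Proposition \ref{rem:stab}, and determine the coefficients $c_u$ by restriction to torus fixed points. The diagonal restriction $(D_\lambda \cdot \Stab_-(w))|_w = D_\lambda|_w \cdot \Stab_-(w)|_w = w(\lambda)\Stab_-(w)|_w$ immediately forces $c_w = w(\lambda)$. To extract $c_{ws_\alpha}$ for $\alpha > 0$ with $w\alpha > 0$, restrict the difference $D_\lambda \cdot \Stab_-(w) - w(\lambda)\Stab_-(w)$ to the fixed point $ws_\alpha$ and divide by $\Stab_-(ws_\alpha)|_{ws_\alpha}$, using the $\hbar$-divisibility from Proposition \ref{rem:stab}(\ref{def:stab(3)}). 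The remaining coefficients at other $u > w$ vanish by a combination of degree considerations and the support/normalization conditions of Proposition \ref{rem:stab}.

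Identity (B) is the geometrically substantive step. For a simple reflection $s_i$, the convolution description identifies $s_i - 1 \in \calH_\hbar$ with the fundamental class of the conormal bundle of $\calB \times_{\calP_i} \calB \subset \calB \times \calB$ in $H_*^\bbG(\St)$. The action of this convolution kernel reduces, fiberwise over $G/P_i$, to a rank-one computation on $T^*\bbP^1$, where the effect on the two stable basis elements is determined directly by fixed-point localization; this yields $(s_i - 1)\Stab_-(w) = -\Stab_-(w) - \Stab_-(ws_i)$, which is (B) for simple $\alpha$. For arbitrary $\alpha = u\alpha_i$, the identity follows by the Weyl conjugation $s_\alpha = u s_i u^{-1}$, together with the $W$-equivariance built into the stable basis construction.

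Substituting (A) and (B) into the BMO expansion then produces the claimed formula. The main obstacle is (B): one must reconcile the Hecke algebra conventions (the convolution kernel corresponds to $s_\alpha - 1$ rather than $s_\alpha$ itself) with the signs and normalizations in the definition of $\Stab_-$ determined by the opposite chamber, so that the rank-one computation on $T^*\bbP^1$ produces exactly the sign $-1$ appearing in (B). Once this is pinned down, (A) is a routine fixed-point calculation and the final assembly is immediate.
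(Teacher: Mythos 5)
The paper does not prove this proposition: it is quoted verbatim as Theorem 3.14 of the cited reference, and the Remark immediately following it sketches precisely the alternative derivation you adopt (combine Proposition \ref{thm:BMO}(\ref{thm:BMO(1)}) with the cup-product Chevalley formula of \cite[Theorem 3.7]{MR3439689}). Your reduction is correct: feeding the BMO operator into $\Stab_-(w)$ and comparing coefficients of $\frac{q^{\alpha^\vee}}{1-q^{\alpha^\vee}}$ does reduce the claim to your identities (A) and (B). For (B), note that the clean route is to invoke the fact recorded in Lemma \ref{lem:TDel}(\ref{lem:TDel(1)}) (from \cite[Lemma 3.2]{MR3595900}) that $u\big(\Stab_-(w)\big)=(-1)^{\ell(u)}\Stab_-(wu^{-1})$ for \emph{arbitrary} $u\in W$; taking $u=s_\alpha$ with $\ell(s_\alpha)$ odd gives $s_\alpha\Stab_-(w)=-\Stab_-(ws_\alpha)$ directly, so your Weyl-conjugation detour for non-simple $\alpha$ is unnecessary (though not wrong). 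The one genuine soft spot is your treatment of (A): the assertion that ``the remaining coefficients at other $u>w$ vanish by a combination of degree considerations and the support/normalization conditions'' is not an argument. Degree counting only pins the coefficient $c_u$ to degree $2$ in $H^*_\bbT(\pt)$, and the support and normalization axioms of Proposition \ref{rem:stab} do not by themselves kill $c_u$ for $u$ with $\ell(u)-\ell(w)\geq 2$ or for $u\not\in\{w\}\cup\{ws_\alpha:w\alpha>0\}$; this vanishing is exactly the nontrivial content of \cite[Theorem 3.7]{MR3439689}. Since (A) is a result the paper only cites, treating it as a black box is legitimate, but you should be clear that your localization sketch does not itself prove it.
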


\begin{Rmk}
  The above formula can also be deduced from Propositions \ref{thm:BMO} (\ref{thm:BMO(1)}) as well as 
  the Chevalley formula \cite[Theorem 3.7]{MR3439689} of the stable basis for cup product. 
\end{Rmk}

Recall the class $\Delta_{\lambda}$ defined in \eqref{eq:Deltacls}.
Proposition \ref{prop:quantum} can be rewritten as
\begin{align}
\Delta_{\lambda}*\Stab_-(w) &
= w(\lambda) \Stab_-(w)
-\hbar\sum_{\alpha>0,w\alpha>0}\left<\lambda,\alpha^\vee\right>\frac{1}{1-q^{\alpha^\vee}}\Stab_-(ws_\alpha)\nonumber\\
    &-\hbar\sum_{\alpha>0,w\alpha<0}\left<\lambda,\alpha^\vee\right>\frac{q^{\alpha^\vee}}{1-q^{\alpha^\vee}}\Stab_-(ws_\alpha).\label{eq:qChe}
\end{align}

\subsection{Proof of Theorem \ref{Swisring}}
Let us first prove the following lemma.

\begin{Lemma}\label{lem:TDel}
For any $w,u\in W$ and $\lambda\in X^*(T)$, we have
\begin{enumerate}[\quad \rm(1)]
\item \label{lem:TDel(1)}
$T_u\big(\Stab_-(w)\big) =(-1)^{\ell(u)}\Stab_-(wu^{-1})$; 
\item \label{lem:TDel(2)}
$T_u (\Delta_\lambda) = \Delta_{u\lambda}$.
\end{enumerate}
\end{Lemma}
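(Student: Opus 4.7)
The plan is to prove both parts by induction on $\ell(u)$, reducing to the case $u = s_i$ of a simple reflection. The base case $u = e$ is immediate. For the inductive step one uses the factorization $T_{uv} = T_u T_v$ whenever $\ell(uv) = \ell(u) + \ell(v)$, which follows directly from $T_u = u \otimes u$, since $w \mapsto w$ embeds $W$ into $\calH_\hbar$ as a group and $W$ also acts honestly on $\calO(T^\vee_{\reg})$. Iterating then gives part (\ref{lem:TDel(2)}) for general $u$ via $T_u(\Delta_\lambda) = T_{u'}(T_{s_i}(\Delta_\lambda)) = T_{u'}(\Delta_{s_i\lambda}) = \Delta_{u\lambda}$, and analogously for part (\ref{lem:TDel(1)}) with the sign absorbing $(-1)^{\ell(u)}$.

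For part (\ref{lem:TDel(2)}) with $u = s_i$, I would split $\Delta_\lambda$ into the $q$-free part $D_\lambda$ and the $q$-dependent part $\hbar \sum_{\alpha>0}\langle\lambda,\alpha^\vee\rangle \frac{q^{\alpha^\vee}}{1-q^{\alpha^\vee}}$. For the first, apply the $\calH_\hbar$-relation $s_i x_\lambda = x_{s_i\lambda} s_i + \hbar\langle\alpha_i^\vee, \lambda\rangle$ to the unit class, using that the Demazure--Lusztig operator fixes $1$ (equivalently, $s_i = \hbar\partial_i + s_i^{\mathrm{BGG}}$ with $\partial_i(1)=0$), giving $T_{s_i}(D_\lambda) = D_{s_i\lambda} + \hbar\langle\lambda, \alpha_i^\vee\rangle$. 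For the $q$-dependent part, apply $q^{\alpha^\vee}\mapsto q^{s_i\alpha^\vee}$. Since $s_i$ permutes $R^+\setminus\{\alpha_i\}$, I would reindex by $\beta = s_i\alpha$ (using $\langle\lambda,\alpha^\vee\rangle = \langle s_i\lambda,\beta^\vee\rangle$) and handle the remaining $\alpha = \alpha_i$ term via the identity
\[\frac{q^{-\alpha_i^\vee}}{1-q^{-\alpha_i^\vee}} = -1 - \frac{q^{\alpha_i^\vee}}{1-q^{\alpha_i^\vee}}.\]
The residual $-\hbar\langle\lambda, \alpha_i^\vee\rangle$ from the $-1$ cancels against the correction $+\hbar\langle\lambda, \alpha_i^\vee\rangle$ from $T_{s_i}(D_\lambda)$, and the surviving sum repackages into $\Delta_{s_i\lambda}$.

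For part (\ref{lem:TDel(1)}) with $u = s_i$, note that $\Stab_-(w)$ is $q$-independent, so $T_{s_i}(\Stab_-(w)) = s_i \cdot \Stab_-(w)$ with $s_i$ acting as a Demazure--Lusztig operator. The strategy is to show the right-hand side satisfies the three characterizing properties (\ref{def:stab(1)})--(\ref{def:stab(3)}) of $-\Stab_-(ws_i)$ in Proposition \ref{rem:stab}. The key input is the fixed-point formula for the Demazure--Lusztig action, expressing $(s_i\gamma)|_v$ in terms of $\gamma|_v$ and $\gamma|_{vs_i}$. Combined with the known restrictions of $\Stab_-(w)$, the support condition (\ref{def:stab(1)}) follows from the bijection $v \leftrightarrow vs_i$ together with compatibility of Bruhat order with right multiplication by $s_i$, and the divisibility statement (\ref{def:stab(3)}) is inherited from that of $\Stab_-(w)$.

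The main obstacle is verifying the diagonal restriction (\ref{def:stab(2)}): one must show that applying the fixed-point formula of $s_i$ to $\Stab_-(w)|_{ws_i}$ and $\Stab_-(w)|_w$ produces exactly
\[-\prod_{\alpha > 0,\, ws_i\alpha > 0}(ws_i\alpha - \hbar)\prod_{\alpha > 0,\, ws_i\alpha < 0} ws_i\alpha.\]
This requires careful root-system bookkeeping, exploiting the relation between the sets $\{\alpha>0 : w\alpha < 0\}$ and $\{\alpha>0 : ws_i\alpha < 0\}$ (which differ by a single reflected element), so that one linear factor flips sign under the $s_i$-action and accounts for the global minus sign. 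Once (\ref{def:stab(2)}) is verified, the combination of the three properties determines $-\Stab_-(ws_i)$ uniquely by Proposition \ref{rem:stab}, completing the induction.
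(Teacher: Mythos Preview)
Your argument for part~(\ref{lem:TDel(2)}) is correct and is exactly what the paper does: reduce to a simple reflection, split $\Delta_\lambda$ into $D_\lambda$ plus the $q$-sum, use $s_i x_\lambda = x_{s_i\lambda}s_i + \hbar\langle\lambda,\alpha_i^\vee\rangle$ on the unit, reindex the sum over $R^+\setminus\{\alpha_i\}$, and cancel the leftover $\hbar\langle\lambda,\alpha_i^\vee\rangle$ against the $\alpha_i$-term via $\frac{q^{-\alpha_i^\vee}}{1-q^{-\alpha_i^\vee}} = -1 - \frac{q^{\alpha_i^\vee}}{1-q^{\alpha_i^\vee}}$.

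For part~(\ref{lem:TDel(1)}) the paper does not argue at all: it simply invokes \cite[Lemma~3.2]{MR3595900}. Your plan to instead verify the three axioms of Proposition~\ref{rem:stab} for $s_i\cdot\Stab_-(w)$ is natural, but the sketch has a real gap in the support condition. It is \emph{not} true that $(s_i\cdot\Stab_-(w))|_u=0$ for $u\not\geq ws_i$ follows from Bruhat combinatorics alone. The localization formula reads
\[
(s_i\gamma)|_u \;=\; \tfrac{\hbar}{u\alpha_i}\,\gamma|_u \;+\; \tfrac{u\alpha_i-\hbar}{u\alpha_i}\,\gamma|_{us_i},
\]
and Bruhat compatibility only yields vanishing when \emph{both} $u\not\geq w$ and $us_i\not\geq w$. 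In the remaining case $u\not\geq ws_i$ but $us_i\geq ws_i$ (which does occur: take $w=e$, $ws_i=s_i$, and $u=e$ or $u=s_j$ with $j\neq i$), both $\Stab_-(w)|_u$ and $\Stab_-(w)|_{us_i}$ are nonzero, and you need an honest cancellation between the two terms. That cancellation requires the \emph{explicit} off-diagonal restrictions $\Stab_-(w)|_v$ for $v>w$, whereas Proposition~\ref{rem:stab} only tells you they are divisible by $\hbar$. The same issue resurfaces in your divisibility claim: at $u=w$ with $ws_i<w$ one gets $(s_i\cdot\Stab_-(w))|_w=\frac{\hbar}{w\alpha_i}\Stab_-(w)|_w$, which is indeed $\hbar$-divisible, but only because the explicit diagonal value in (\ref{def:stab(2)}) contains the factor $w\alpha_i$; this is not ``inherited'' from condition~(\ref{def:stab(3)}). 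So the characterization by itself is not enough input to run your argument; one needs either the closed restriction formulas for $\Stab_-$ or the geometric/recursive construction used in \cite{MR3595900}.
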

\begin{proof}
Since $\Stab_-(w)$ has no quantum component, we have $T_u(\Stab_-(w))=u(\Stab_-(w))$. 
We know from \cite[Lemma 3.2]{MR3595900} that 
$u\,\big(\Stab_-(w)\big)=(-1)^{\ell(u)}\Stab_-(wu^{-1})$. 
This proves the first part (\ref{lem:TDel(1)}). 

For the second part (\ref{lem:TDel(2)}), it suffices to show the case when $w=s_i$ is a simple reflection:
\begin{align*}
T_{s_i}(\Delta_{\lambda})
& = s_i(D_{\lambda}) + \hbar\sum_{\alpha>0} \left<\lambda,\alpha^\vee\right>\frac{q^{s_i\alpha^\vee}}{1-q^{s_i\alpha^\vee}}\\
& = D_{s_i\lambda}+\hbar\left<\lambda,\alpha_i^\vee\right> + \hbar\sum_{\alpha>0,\alpha\neq \alpha_i} \left<s_i\lambda,\alpha^\vee\right>\frac{q^{\alpha^\vee}}{1-q^{\alpha^\vee}}+\hbar\left<\lambda,\alpha_i^\vee\right>\frac{q^{-\alpha_i^\vee}}{1-q^{-\alpha_i^\vee}}\\
& = \Delta_{s_i\lambda}.
\end{align*}
Here in the second equality, we have used the facts $s_i(R^+\setminus\{\alpha_i\})=R^+\setminus\{\alpha_i\}$ and 
$s_i(D_{\lambda})=s_ix_\lambda \cdot 1=(x_{s_i\lambda} +\hbar\langle\lambda,\alpha_i^\vee\rangle)\cdot 1= D_{s_i\lambda}+\hbar\langle\lambda,\alpha_i^\vee\rangle$.
\end{proof}

With this lemma, we can prove the following special case of Theorem \ref{Swisring}.
\begin{Prop}\label{SDeltaStab}
For any $w,u\in W$ and $\lambda\in X^*(T)$,
\begin{equation}
T_u(\Delta_{\lambda}*\Stab_-(w)) =
T_u(\Delta_{\lambda}) * T_u(\Stab_-(w)).
\end{equation}
\end{Prop}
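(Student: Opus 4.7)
\textbf{Proof plan for Proposition \ref{SDeltaStab}.} The strategy is to expand both sides of \eqref{SDeltaStab} in the stable basis using the quantum Chevalley formula \eqref{eq:qChe}, apply Lemma \ref{lem:TDel}, and reindex. First, I apply $T_u = u \otimes u$ term-by-term to the right-hand side of \eqref{eq:qChe}. Since the Demazure--Lusztig action of $u$ on $H^*_{\bbT}(T^*\calB)$ is $H^*_{\bbT}(\pt)$-linear, the coefficients $w(\lambda)$ and $\langle\lambda,\alpha^\vee\rangle$ are untouched, while $\Stab_-(ws_\alpha)$ goes to $(-1)^{\ell(u)}\Stab_-(ws_\alpha u^{-1})$ by Lemma \ref{lem:TDel}(\ref{lem:TDel(1)}), and $q^{\alpha^\vee}$ goes to $q^{u\alpha^\vee}$. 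Setting $w':=wu^{-1}$ and substituting $\beta=u\alpha$ (so $ws_\alpha u^{-1}=w'\,s_\beta$, $w\alpha = w'\beta$, and $\langle\lambda,\alpha^\vee\rangle=\langle u\lambda,\beta^\vee\rangle$), the expansion of $T_u(\Delta_\lambda*\Stab_-(w))$ becomes a sum over $\beta\in u(R^+)$, with coefficients depending on $\mathrm{sign}(w'\beta)$.

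On the other hand, by Lemma \ref{lem:TDel}, $T_u(\Delta_\lambda)*T_u(\Stab_-(w)) = (-1)^{\ell(u)}\,\Delta_{u\lambda}*\Stab_-(w')$, which I expand using \eqref{eq:qChe} as a sum over $\beta\in R^+$, again with coefficients depending on $\mathrm{sign}(w'\beta)$. Since $w'(u\lambda)=w\lambda$, the ``diagonal'' terms match. The two expansions agree on the intersection $uR^+\cap R^+$; the discrepancy lies in the symmetric difference $uR^+ \triangle R^+$.

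The key step is to match the extra contributions. On the LHS side we have $\beta\in uR^+\cap R^-$; writing $\gamma=-\beta$, these $\gamma$'s are exactly the positive roots in $R^+\setminus uR^+$, which parametrize the extra RHS contributions. I then use the elementary identities
\begin{equation*}
\frac{1}{1-q^{-\gamma^\vee}}=-\frac{q^{\gamma^\vee}}{1-q^{\gamma^\vee}},\qquad
\frac{q^{-\gamma^\vee}}{1-q^{-\gamma^\vee}}=-\frac{1}{1-q^{\gamma^\vee}},
\end{equation*}
together with $s_{-\gamma}=s_\gamma$, $(-\gamma)^\vee=-\gamma^\vee$ and the sign flip $w'\beta=-w'\gamma$, to check that the LHS coefficient of $\Stab_-(w's_\gamma)$ in case $w'\beta>0$ matches the RHS coefficient in case $w'\gamma<0$ (and vice versa). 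Every sign change cancels out to give exact agreement.

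The main obstacle is bookkeeping: the $H^*_\bbT(\pt)$-linearity of the Demazure--Lusztig action, the noncommutativity between $u$ and $x_\lambda$ hidden in Lemma \ref{lem:TDel}(\ref{lem:TDel(2)}), and especially tracking which positive roots lie in $uR^+$ versus $R^+$ together with the correct sign conventions. Once the substitution $\beta=u\alpha$ and the reindexing $\gamma=-\beta$ are carried out cleanly, the identity reduces to the four elementary sign computations above.
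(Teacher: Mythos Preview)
Your proposal is correct and follows essentially the same approach as the paper's proof: expand both sides via the quantum Chevalley formula \eqref{eq:qChe}, apply Lemma \ref{lem:TDel}, and match coefficients by a case analysis on whether $u\alpha$ (your $\beta$) is positive or negative, using the identities $\frac{1}{1-q^{-\gamma^\vee}}=-\frac{q^{\gamma^\vee}}{1-q^{\gamma^\vee}}$ to reconcile the two sides. Your phrasing in terms of the symmetric difference $uR^+\triangle R^+$ is a tidy repackaging of the paper's Cases B and C, but the content is identical.
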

\begin{proof}The proof is given by direct computation using the definition of $T_u$, Lemma \ref{lem:TDel}, and \eqref{eq:qChe}. On one hand, we have
\begin{align*}
  &\quad (-1)^{\ell(u)}T_u(\Delta_{\lambda}) * T_u(\Stab_-(w))=\Delta_{u\lambda}*\Stab_-(wu^{-1})\\
  &=w(\lambda) \Stab_-(wu^{-1}) -\hbar\sum_{\alpha>0,wu^{-1}\alpha>0}\left<u\lambda,\alpha^\vee\right>\frac{1}{1-q^{\alpha^\vee}}\Stab_-(wu^{-1}s_\alpha)\\
    &\quad -\hbar\sum_{\alpha>0,wu^{-1}\alpha<0}\left<u\lambda,\alpha^\vee\right>\frac{q^{\alpha^\vee}}{1-q^{\alpha^\vee}}\Stab_-(wu^{-1}s_\alpha).
\end{align*}
On the other hand, we have
\begin{align*}
  &\quad (-1)^{\ell(u)}T_u(\Delta_{\lambda}*\Stab_-(w)) \\
  &= w(\lambda) \Stab_-(wu^{-1})-\hbar\sum_{\beta>0,w\beta>0}
\left<\lambda,\beta^\vee\right>\frac{1}{1-q^{u\beta^\vee}}\Stab_-(ws_\beta u^{-1})\\
    &\quad-\hbar\sum_{\beta>0,w\beta<0}\left<\lambda,\beta^\vee\right>\frac{q^{u\beta^\vee}}{1-q^{u\beta^\vee}}\Stab_-(ws_\beta u^{-1})
\end{align*}
We finish the proof by matching the coefficients in the two expressions above.

\medbreak
{\bf Case A.}
The coefficients of $\Stab_-(wu^{-1})$ are both $w(\lambda)$.

\medbreak
{\bf Case B.}
Let $\alpha>0$ such that $u^{-1}\alpha=\beta>0$. 
If $wu^{-1}\alpha>0$, i.e. $w\beta >0$, we have
\begin{align*}
\left<u\lambda,\alpha^\vee\right>\dfrac{1}{1-q^{\alpha^\vee}}\Stab_-(wu^{-1}s_{\alpha})
& = 
\left<\lambda,\beta^\vee\right>\dfrac{1}{1-q^{u\beta^\vee}}\Stab_-(ws_{\beta}u^{-1}).
\end{align*}
Similarly, if $wu^{-1}\alpha<0$, i.e. $w\beta <0$, we have
\begin{align*}
\left<u\lambda,\alpha^\vee\right>\dfrac{q^{\alpha^\vee}}{1-q^{\alpha^\vee}}\Stab_-(wu^{-1}s_{\alpha})
& = 
\left<\lambda,\beta^\vee\right>\dfrac{q^{u\beta^\vee}}{1-q^{u\beta^\vee}}\Stab_-(ws_{\beta}u^{-1}).
\end{align*}

\medbreak
{\bf Case C.}
Let $\alpha>0$ such that $u^{-1}\alpha=-\beta>0$. 
If $wu^{-1}\alpha>0$, i.e. $w\beta < 0$, we have
\begin{align*}
\left<u\lambda,\alpha^\vee\right>\dfrac{1}{1-q^{\alpha^\vee}}\Stab_-(wu^{-1}s_{\alpha})
& = \left<\lambda,-\beta^\vee\right>\dfrac{1}{1-q^{-u\beta^\vee}}\Stab_-(ws_{\beta}u^{-1})\\
& = \left<\lambda,\beta^\vee\right>\dfrac{q^{u\beta^\vee}}{1-q^{u\beta^\vee}}\Stab_-(ws_{\beta}u^{-1}).
\end{align*}
Similarly, if $wu^{-1}\alpha<0$, i.e. $w\beta >0$, we have
\begin{align*}
\left<u\lambda,\alpha^\vee\right>\dfrac{q^{\alpha^\vee}}{1-q^{\alpha^\vee}}\Stab_-(wu^{-1}s_{\alpha})
& = \left<\lambda,-\beta^\vee\right>\dfrac{q^{-u\beta^\vee}}{1-q^{-u\beta^\vee}}\Stab_-(ws_{\beta}u^{-1})\\
& = \left<\lambda,\beta^\vee\right>\dfrac{1}{1-q^{u\beta^\vee}}\Stab_-(ws_{\beta}u^{-1}).\qedhere
\end{align*}
\end{proof}

We also need the following easy lemma.
\begin{Lemma}\label{lem:prep}
Let $\gamma_1,\gamma_2,\gamma_3\in QH_{\bbT}^*(T^*\calB)$.
\begin{enumerate}[\quad \rm(1)]
\item\label{lem:prep(1)}
    Assume $T_u(\gamma_1 * \gamma_2)=T_u(\gamma_1)*T_u(\gamma_2)$ and $T_u(\gamma_1 * \gamma_3)=T_u(\gamma_1)*T_u(\gamma_3)$. Then
    for any $f_2,f_3\in H^*_{\mathbb{T}}(\pt)\otimes \calO(T^\vee_{\reg})$,
    \[T_u(\gamma_1 * (f_2\gamma_2+f_3\gamma_3))=T_u(\gamma_1)*T_u(f_2\gamma_2+f_3\gamma_3).\]
\item\label{lem:prep(2)}
    Assume $T_u(\gamma_i * \gamma)=T_u(\gamma_i)*T_u(\gamma)$ for $i=1,2$ and any $\gamma\in QH_{\bbT}^*(T^*\calB)$. Then
    \[T_u(\gamma_1*\gamma_2 * \gamma)=T_u(\gamma_1*\gamma_2)*T_u(\gamma).\]
\end{enumerate}
\end{Lemma}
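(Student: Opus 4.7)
The plan is to deduce both parts formally from two facts: the $H^*_\bbT(\pt)\otimes\calO(T^\vee_{\reg})$-bilinearity of the quantum product, and the way in which $T_u$ interacts with scalar multiplication. For the latter, the key observation is that in $T_u = u\otimes u$ the first factor acts $H^*_\bbT(\pt)$-linearly on $H^*_\bbT(T^*\calB)$ (this is recorded in the discussion of the affine Hecke algebra action in Section \ref{sec: springer}), while the second factor is a ring automorphism of $\calO(T^\vee_{\reg})$ induced by $u \in W$. Writing $f = \sum_i a_i \otimes b_i$ with $a_i \in H^*_\bbT(\pt)$ and $b_i \in \calO(T^\vee_{\reg})$ and applying $T_u$ termwise gives the compatibility
\[
T_u(f\cdot \gamma) = \tilde u(f)\cdot T_u(\gamma), \qquad \tilde u := \id\otimes u,
\]
for every $\gamma \in QH^*_\bbT(T^*\calB)$, where $\tilde u$ is the automorphism of the coefficient ring induced by the Weyl group action on the second tensor factor alone.

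For part (\ref{lem:prep(1)}), I would first expand $\gamma_1 * (f_2\gamma_2 + f_3\gamma_3) = f_2(\gamma_1*\gamma_2) + f_3(\gamma_1*\gamma_3)$ using bilinearity of $*$, apply $T_u$, and substitute the two multiplicativity hypotheses after pulling out the scalars via the compatibility displayed above. Re-assembling with bilinearity of $*$ then yields $T_u(\gamma_1) * T_u(f_2\gamma_2+f_3\gamma_3)$. For part (\ref{lem:prep(2)}), the chain
\[
T_u(\gamma_1 * \gamma_2 * \gamma) = T_u\bigl(\gamma_1*(\gamma_2*\gamma)\bigr) = T_u(\gamma_1) * T_u(\gamma_2*\gamma) = T_u(\gamma_1)*T_u(\gamma_2)*T_u(\gamma)
\]
uses associativity of $*$ together with the hypothesis for $i=1$ applied to the partner $\gamma_2*\gamma$ and then the hypothesis for $i=2$; a final application of the hypothesis for $i=1$ with partner $\gamma_2$ collapses $T_u(\gamma_1)*T_u(\gamma_2)$ back into $T_u(\gamma_1*\gamma_2)$, giving the claim.

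Neither part presents a real obstacle, since the content is purely formal bookkeeping once the compatibility identity is in place. The only subtlety worth spelling out carefully is that the Demazure--Lusztig factor of $T_u$ fixes the equivariant scalars in $H^*_\bbT(\pt)$, whereas the Weyl group factor acts nontrivially on $\calO(T^\vee_{\reg})$; these two effects package together into the single twisted scalar automorphism $\tilde u$ that appears above, and any attempt to conflate them would introduce spurious extra terms.
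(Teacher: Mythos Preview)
Your proof is correct and follows essentially the same approach as the paper, which simply records that part~(\ref{lem:prep(1)}) follows from the definition of $T_u$ and part~(\ref{lem:prep(2)}) from associativity of the quantum product. Your write-up is a careful unpacking of exactly these two remarks, including the useful explicit formulation of the twisted-scalar compatibility $T_u(f\cdot\gamma)=\tilde u(f)\cdot T_u(\gamma)$.
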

\begin{proof}
  The first one follows from the definition of $T_u$, while the second one follows from the associativity of the quantum product.
\end{proof}

Now we can prove Theorem \ref{Swisring}.

\begin{proof}[Proof of Theorem \ref{Swisring}] 
Recall that by Proposition \ref{rem:stab} the stable basis forms a basis of $H_{\bbT}(T^*\calB)$ after inverting the equivariant parameters. Thus by Proposition \ref{SDeltaStab} and Lemma \ref{lem:prep} (\ref{lem:prep(1)}), \eqref{eq:Swisring} is true for $\gamma_1=\Delta_{\lambda}$ with $\lambda\in X^*(T)$ and arbitrary $\gamma_2\in QH_{\bbT}(T^*\calB)$.
Since $QH_{\bbT}(T^*\calB)$ is generated by classes $\Delta_{\lambda}$, the theorem follows from Lemma \ref{lem:prep} (\ref{lem:prep(2)}). 
\end{proof}

\section{Presentation of Quantum Cohomology}\label{sec:Pres}

The equivariant cohomology of $T^*\calB$ is generated by the divisors $D_\mu$. Thus there is a natural surjective morphism $\pi$ of algebras over $\calO(T_{\reg}^\vee)[\hbar]$,
\begin{equation}\label{equ:pi}
\pi:\calO(T_{\reg}^\vee)[\hbar][\mu,D_{\mu}]_{\mu\in X^*(T)} \longrightarrow QH_{\bbT}^*(T^*\calB).
\end{equation}
In this section, we will determine the kernel $\ker \pi$. More precisely,
for any (anti-)dominant weight $\lambda\in X^*(T)_\pm$ and positive integer $k$, we let 
\begin{equation}
\mathcal{R}_{\lambda}^k=\operatorname{tr}\big(\Theta(\lambda)^k\big)-\sum_{u\in W^\lambda} (u\lambda)^k
\end{equation}
where $\Theta(\lambda)=\big(\Theta_{{u},{v}}(\lambda)\big)_{{u},{v}\in W^\lambda}$ is a square matrix of order $|W^\lambda|=|W\!\lambda|$  defined by 
 \begin{equation}\label{eq:defofThetalambda}
\Theta_{{u},{v}}(\lambda) =
\begin{cases}
\Delta_{u\lambda}, & \mbox{if } {u}={v},\\[1ex]
\frac{-\hbar\left<\lambda,\alpha^\vee\right>}{1-q^{u\alpha^\vee}}, & \text{if $\o{us_\alpha}=v$ for some $\alpha\in R^+\setminus R_\lambda^+$},\\[1ex]
0,& \text{otherwise}.
\end{cases}
\end{equation}
Here $R_\lambda^+:=R^+\bigcap (\sum_{s_i\in W_\lambda}\mathbb{Z}\alpha_i)$, and we recall that  $\o{us_\alpha}\in W^\lambda$ denotes the minimal length representative of the coset $us_\alpha W_\lambda\in W/W_\lambda$. The root $\alpha$ in the second case is unique \cite{MR2072765}. The following theorem is the main result of this section. 
\begin{Th}\label{qHpre}
We have the following presentation:
\begin{align*}
QH_{\bbT}^*(T^*\calB)
&\cong \calO(T^\vee_{\reg})[\hbar][\mu,D_{\mu}]_{\mu\in X^*(T)}\bigg/\big<\mathcal{R}_{\lambda}^k\mid\lambda\in X^*(T)_+, k> 0\big>.
\end{align*}
\end{Th}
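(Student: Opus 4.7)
The plan is to prove the theorem in two steps: (a) show that every class $\mathcal{R}_{\lambda}^k$ lies in $\ker \pi$, and (b) show that these classes generate the entire kernel.

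For (a), the key commutative-algebra input should be Lemma \ref{lem: relation}: to verify the identity $\operatorname{tr}(\Theta(\lambda)^k) = \sum_{\mu \in W\!\lambda}\mu^k$ inside $QH^*_\bbT(T^*\calB)$, it suffices to exhibit $\Theta(\lambda)$ as a matrix that is similar, over a suitable localization, to the diagonal matrix $\operatorname{diag}(u\lambda)_{u \in W^\lambda}$. I would construct this similarity using the stable basis together with the quantum Demazure--Lusztig automorphisms. Concretely, applying $T_u$ to the quantum Chevalley formula \eqref{eq:qChe} for $\Delta_\lambda * \Stab_-(e)$ and invoking Lemma \ref{lem:TDel} yields a formula for $\Delta_{u\lambda} * \Stab_-(u^{-1})$ in which precisely the denominators $1 - q^{u\alpha^\vee}$ appear; reindexing positive roots via $\beta = u\alpha$ and grouping stable basis elements along $W_\lambda$-cosets reorganize these identities into a matrix equation whose coefficient matrix is exactly $\Theta(\lambda)$. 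The ring-automorphism property from Theorem \ref{Swisring} guarantees that the resulting matrix is conjugate to $\operatorname{diag}(u\lambda)_{u \in W^\lambda}$. Taking traces of powers and noting $\sum_{u \in W^\lambda}(u\lambda)^k = \sum_{\mu \in W\!\lambda}\mu^k$ gives $\mathcal{R}_{\lambda}^k = 0$.

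For (b), I would invoke the Deformation Principle of Proposition \ref{Mainprop}. The point is that $QH^*_\bbT(T^*\calB)$ is a flat deformation of $H^*_\bbT(T^*\calB) = H^*_T(\calB)[\hbar]$, which by Proposition \ref{Th:BorelTh} is presented by relations $f(D_\mu) - f(\mu)$ for $f \in \Sym(\ft^*)^W$. Newton-type identities show that the orbit power sums $\sum_{\mu \in W\!\lambda}\mu^k$ (as $\lambda$ ranges over dominant weights and $k \geq 1$) span $\Sym(\ft^*)^W$, so the classical specialization of $\mathcal{R}_{\lambda}^k$ recovers a generating set of the Borel relations up to lower-order $q$- and $\hbar$-corrections. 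The Deformation Principle then lifts this to an isomorphism in the quantum setting, using that both the candidate quotient and $QH^*_\bbT(T^*\calB)$ are free of rank $|W|$ over the coefficient ring $\calO(T^\vee_{\reg})[\hbar]\otimes\Sym(\ft^*)$.

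The main obstacle lies in Step (a): verifying that the explicit matrix $\Theta(\lambda)$, with its specific denominators $1 - q^{u\alpha^\vee}$ and its indexing by minimal coset representatives in $W^\lambda$, genuinely represents an operator diagonalizable to $\operatorname{diag}(u\lambda)_{u\in W^\lambda}$ over the localized ring. The case of non-regular $\lambda$, where $W_\lambda$ is non-trivial, demands careful bookkeeping: one must pass from the full stable basis indexed by $W$ to a $W_\lambda$-symmetrized submodule of rank $|W^\lambda|$, and the uniqueness clause in the definition of $\Theta(\lambda)$ --- that the $\alpha \in R^+ \setminus R_\lambda^+$ with $\o{us_\alpha} = v$ is unique whenever it exists --- is precisely what makes this symmetrization compatible with the matrix entries produced by transferring \eqref{eq:qChe} across the automorphisms $T_u$.
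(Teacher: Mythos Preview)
Your overall architecture matches the paper's: Step~(a) establishes $\mathcal{R}_\lambda^k\in\ker\pi$ via a similarity $\Theta(\lambda)\sim\Xi(\lambda)$, and Step~(b) is the Deformation Principle (though note this is Proposition~\ref{DefPrin}, not Proposition~\ref{Mainprop}).

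There is, however, a genuine gap in Step~(a). Applying $T_u$ to the Chevalley identity for $\Delta_\lambda*\varsigma_\lambda$ (with $\varsigma_\lambda$ the $W_\lambda$-averaged stable class) does reproduce the rows of $\Theta(\lambda)$, but it yields only a \emph{single} eigenvector: since the Demazure--Lusztig operators are $H^*_\bbT(\pt)$-linear, $T_u$ fixes the equivariant scalar $\lambda$, so every identity you obtain still has eigenvalue $\lambda$. What you have described assembles into the one-column equation $\Theta(\lambda)\cdot(\Sigma_{u,1})_u=\lambda\cdot(\Sigma_{u,1})_u$. To invoke Lemma~\ref{lem: relation} you need a full $|W^\lambda|\times|W^\lambda|$ matrix $\Sigma(\lambda)$ whose columns are eigenvectors with the \emph{distinct} eigenvalues $v(\lambda)$, $v\in W^\lambda$. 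The paper supplies these via a second, independent Weyl group action: the \emph{left} action $v^L$ on $QH^*_\bbT(T^*\calB)$ induced from left $G$-multiplication (Lemma~\ref{lem:leftW}). This action commutes with $T_u$, is again a ring automorphism, fixes every entry of $\Theta(\lambda)$, but sends $\lambda\mapsto v(\lambda)$; hence applying $v^L$ to the $v=1$ column manufactures the remaining eigenvectors and gives $\Sigma(\lambda)_{u,v}=v^L\circ T_u(\varsigma_\lambda)$. Your sentence ``the ring-automorphism property from Theorem~\ref{Swisring} guarantees\dots conjugate to $\operatorname{diag}(u\lambda)$'' is exactly where this is missing: Theorem~\ref{Swisring} alone cannot change the eigenvalue.

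Your Step~(b) is essentially correct. The paper's Deformation Principle argues by minimal $\hbar$-degree rather than by comparing ranks of free modules, but either works; the substantive input---that orbit power sums $\sum_{\mu\in W\!\lambda}\mu^k$ span $\Sym(\ft^*)^W$---is the same.
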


\begin{Eg}\label{preforsl2}
Let $G=SL(2, \bbC)$, and use the same notation as in Example \ref{eg:autoP1}. 
Then
$$\Theta(\varpi)=
\bigg[\!\!\begin{subarray}{c}
  \phantom{\frac{1}{2}}
D_{\varpi}+\hbar\frac{q}{1-q}
  \phantom{\frac{1}{2}}\\
\frac{-\hbar}{1-q^{{-}1}}
\end{subarray}\!\!\!\!
\begin{subarray}{c}
\frac{-\hbar}{1-q}\\
  \phantom{\frac{1}{2}}
D_{{-}\varpi}-\hbar\frac{q}{1-q}
  \phantom{\frac{1}{2}}
\end{subarray}\!\!\bigg],$$
and
\begin{align*}
\tfrac{1}{2}
\operatorname{tr}\big(\Theta(\varpi)^2\big)
& =
\big(D_{\varpi}+\hbar\tfrac{q}{1-q}\big)^2
+\tfrac{\hbar^2}{(1-q)(1-q^{-1})}\\
& = D_{\varpi}*D_{\varpi} + 2\hbar \tfrac{q}{1-q}D_{\varpi}
-\hbar^2 \tfrac{q}{1-q}.
\end{align*}
Theorem \ref{qHpre} gives
\begin{align}\label{eq:preforsl2}
D_{\varpi}*D_{\varpi} + 2\hbar \frac{q}{1-q}D_{\varpi}
-\hbar^2 \frac{q}{1-q}
=\varpi^2.
\end{align}
Thus $QH_{\bbT}^*(T^*\calB)$ is generated by $D_{\varpi}$ with relation \eqref{eq:preforsl2}. 

On the other hand, $T^*\calB=T^*\mathbb{P}^1$ is also a hypertoric variety. Following \cite{MR3095147}, we denote by $x$ and $y$ the fundamental class of the cotangent fibers at $0$ and $\infty$, respectively. We denote $h=-\hbar$. Then
$$x =-\varpi-D_{\varpi}\quad \text{and}\quad y =\varpi-D_{\varpi}.$$
We thus get the following relation, compatible with \cite[Theorem 1.1]{MR3095147}, 
$$x*y = q(h-x)*(h-y).$$
\end{Eg}

\begin{Rmk}\label{rmk:TequivtoG}
Note that the $\bbG$-equivariant (quantum) cohomology can be viewed as the invariants ring under the left Weyl group action recalled at the beginning of Section \ref{sec:rel} below. 
By Lemma \ref{lem:leftW}, it is not hard to obtain that 
\[QH_{\mathbb{G}}^*(T^*\calB)\cong \calO(T^\vee_{\reg})[\hbar][D_{\mu}]_{\mu\in X^*(T)}\cong \calO(T^\vee_{\reg}\times \ft^*)[\hbar].\] 
This recovers the first part of Proposition \ref{thm:BMO} (\ref{thm:BMO(3)}) by Braverman, Maulik, and Okounkov. 
Using the second part of Proposition \ref{thm:BMO} (\ref{thm:BMO(3)}), our result provides an application to the Calogero--Moser system, see Corollary \ref{thm:classicalcm}. 
\end{Rmk}

\subsection{Construction of Relations}\label{sec:rel} In this subsection, we will show that the equalities $\mathcal{R}_\lambda^k=0$ hold in $QH_{\bbT}^*(T^*\calB)$.

Recall that the left Weyl group action on $H^*_{\mathbb{T}}(T^*\calB)$ is induced by the left multiplication action of $G$ on $T^*\calB$. 
By extending it linearly over $\calO(T^\vee_{\reg})$, we get a left action of $W$ on $QH^*_{\mathbb{T}}(T^*\calB)$. For $w\in W$, we use $w^L$ to denote this action. 
By noting that $w^L$ preserves line bundles and using 
\cite[Equation (12)]{MR4465997} \footnote{The formula in \emph{loc. cit.} was stated for $QH^*_T(G/P)$, while the whole set-up works for all $G$-varieties.}, we have 
\begin{Lemma}\label{lem:leftW}
  The following holds for any 
 $f\in H^*_{\bbT}(\pt)$, $g\in \calO(T^\vee_{\reg})$, $\mu\in X^*(T)$ and $\gamma_1, \gamma_2\in QH^*_{\bbT}(T^*\calB)$:
\begin{enumerate}[\quad\rm(1)]
    \item \label{lem:leftW(1)}
        $w^L (f gD_\mu)=w(f)gD_\mu$;  
    \item $w^L (\gamma_1*\gamma_2)=(w^L\gamma_1)*(w^L\gamma_2)$.
\end{enumerate}
\end{Lemma}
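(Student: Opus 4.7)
Both parts rest on standard functoriality of equivariant cohomology. We identify $w^L$ as pullback by a representative $\dot w\in N_G(T)$, independent of the choice because $G$ is connected. Since $\dot w$ is only $\bbT$-equivariant up to the conjugation action on $\bbT=T\times\bbC^*$ (which acts as $w$ on $T$ and trivially on the dilation factor), the induced map $w^L$ is $w$-semilinear over $H^*_{\bbT}(\pt)$, and we extend it $\calO(T^\vee_{\reg})$-linearly to $QH^*_{\bbT}(T^*\calB)$.

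The crux of part (\ref{lem:leftW(1)}) is that $w^L$ fixes $D_\mu$. This follows because $\calL_\mu=G\times_B\bbC_\mu$ is tautologically $G$-equivariant, and its pullback to $T^*\calB$ is $\bbG$-equivariant with trivial $\bbC^*$-action on the fibers. Hence its first Chern class lifts to $H^*_{\bbG}(T^*\calB)$ and restricts to $D_\mu$ under the natural map $H^*_{\bbG}(T^*\calB)\to H^*_{\bbT}(T^*\calB)$; meanwhile $w^L$ restricts to the identity on this image, since pullback by any element of the connected group $G$ acts trivially on $H^*_{\bbG}$ of a $G$-space. Combined with the semilinearity from the previous paragraph, this gives (\ref{lem:leftW(1)}).

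For part (2), the compatibility of $w^L$ with the quantum product is a manifestation of the $\bbG$-equivariance of Gromov--Witten theory: the moduli spaces $\overline{M}_{0,k}(T^*\calB,\beta)$, the evaluation maps, and the virtual fundamental classes are all $\bbG$-equivariant, so pullback by $\dot w$ intertwines $3$-point GW invariants in the $w$-semilinear fashion dictated by (\ref{lem:leftW(1)}). The Poincar\'e pairing, defined by $\bbT$-equivariant residue in this noncompact setting, is likewise compatible with $w^L$ because $N_G(T)$ permutes the $\bbT$-fixed locus consistently with the $w$-twist on $H^*_{\bbT}(\pt)$. This is precisely the content of \cite[Eq.~(12)]{MR4465997} in the $G/P$ setting, and the proof transfers verbatim to the smooth $G$-variety $T^*\calB$.

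The main point to verify carefully is the $W$-equivariance of the equivariant residue used to define integration on the noncompact $T^*\calB$, but this is automatic from the $\bbT$-localization theorem together with the observation that $\dot w$ identifies the normal bundle at a $\bbT$-fixed point with that at its image in the $w$-twisted manner required for the semilinearity to balance.
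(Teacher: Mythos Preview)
Your proposal is correct and follows essentially the same approach as the paper. The paper's own justification is the single sentence preceding the lemma---``by noting that $w^L$ preserves line bundles and using \cite[Equation~(12)]{MR4465997}''---together with the footnote that the setup there carries over to general $G$-varieties; you have simply unpacked both ingredients (the $G$-equivariance of $\calL_\mu$ for part~(1), and the $\bbG$-equivariance of the virtual class and evaluation maps for part~(2)) in more detail, and added the observation that the equivariant-residue pairing is compatible with the $N_G(T)$-action, which is indeed the point one must check in the noncompact setting.
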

\noindent 
Thus every $w^L$ is a ring automorphism of $QH^*_{\mathbb{T}}(T^*\calB)$. Moreover, the above action of $W$ commutes with the action by the affine Hecke algebra $\calH_\hbar$.

\begin{Lemma}\label{lem:Theta}
  Let $u,v,w\in W$ and $\alpha\in R^+\setminus R^+_\lambda$. We have
\begin{enumerate}[\quad\rm(1)]
  \item \label{lem:Theta(1)}
  $\Theta_{1,\o{s_\alpha}}(\lambda)=\frac{-\hbar\left<\lambda,\alpha^\vee\right>}{1-q^{\alpha^\vee}}$;
  \item \label{lem:Theta(2)}
  $T_u(\Theta_{{1},{w}})=\Theta_{\o{u},\o{uw}}$;
  \item \label{lem:Theta(3)}
  $v^L\circ T_u(\Theta_{{1},{w}})=\Theta_{\o{u},\o{uw}}$.
\end{enumerate}
\end{Lemma}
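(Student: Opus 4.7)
The plan is to verify each assertion by direct computation, using Lemma \ref{lem:TDel} to evaluate $T_u$ on the entries of $\Theta$ and Lemma \ref{lem:leftW} to track $v^L$. Part (\ref{lem:Theta(1)}) is immediate from \eqref{eq:defofThetalambda} at $u=1$, $v=\o{s_\alpha}$: since $\o{1\cdot s_\alpha}=\o{s_\alpha}=v$ and $\alpha\in R^+\setminus R_\lambda^+$, we are in the second case of the definition, and substituting $u=1$ gives the stated formula.

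For part (\ref{lem:Theta(2)}), I would run a case analysis on $w\in W^\lambda$. If $w=1$, then $\Theta_{1,1}=\Delta_\lambda$ and Lemma \ref{lem:TDel}(\ref{lem:TDel(2)}) gives $T_u(\Delta_\lambda)=\Delta_{u\lambda}$; writing $\o u=uw_1$ with $w_1\in W_\lambda$ and using $w_1\lambda=\lambda$, this equals $\Delta_{\o u\lambda}=\Theta_{\o u,\o u}$. If $w=\o{s_\alpha}$ for some $\alpha\in R^+\setminus R_\lambda^+$, part (\ref{lem:Theta(1)}) gives $\Theta_{1,w}=\frac{-\hbar\langle\lambda,\alpha^\vee\rangle}{1-q^{\alpha^\vee}}$; since $T_u$ is $H^*_\bbT(\pt)$-linear and acts on $\calO(T^\vee_{\reg})$ by $q^{\alpha^\vee}\mapsto q^{u\alpha^\vee}$, one gets $T_u(\Theta_{1,w})=\frac{-\hbar\langle\lambda,\alpha^\vee\rangle}{1-q^{u\alpha^\vee}}$. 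To identify this with $\Theta_{\o u,\o{us_\alpha}}$, I would set $\beta=w_1^{-1}\alpha$ and invoke the classical fact that $W_\lambda$ permutes $R^+\setminus R_\lambda^+$ (its generators $s_i$ satisfy $\langle\lambda,\alpha_i^\vee\rangle=0$ and permute $R^+\setminus\{\alpha_i\}$ while preserving $R_\lambda$), so that $\beta\in R^+\setminus R_\lambda^+$. A short manipulation yields $\o u\,s_\beta=uw_1\,w_1^{-1}s_\alpha w_1=us_\alpha w_1\in us_\alpha W_\lambda$, hence $\o{\o u\,s_\beta}=\o{us_\alpha}$; combined with $\langle\lambda,\beta^\vee\rangle=\langle w_1\lambda,\alpha^\vee\rangle=\langle\lambda,\alpha^\vee\rangle$ and $\o u\,\beta^\vee=u\alpha^\vee$, this recognizes $\Theta_{\o u,\o{us_\alpha}}=T_u(\Theta_{1,w})$. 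Finally, if $w\in W^\lambda$ lies outside $\{1\}\cup\{\o{s_\alpha}:\alpha\in R^+\setminus R_\lambda^+\}$, then $\Theta_{1,w}=0$ and the same correspondence run in reverse shows $\Theta_{\o u,\o{uw}}=0$.

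Part (\ref{lem:Theta(3)}) follows directly from part (\ref{lem:Theta(2)}): every entry of $\Theta$ is built from divisors $D_\mu$, integer scalars, the parameter $\hbar$, and elements of $\calO(T^\vee_{\reg})$, and by Lemma \ref{lem:leftW}(\ref{lem:leftW(1)}) each of these is fixed by $v^L$ (in particular $v^L(\hbar)=\hbar$ since $\hbar$ is $W$-invariant). Thus $v^L(T_u(\Theta_{1,w}))=T_u(\Theta_{1,w})=\Theta_{\o u,\o{uw}}$. The main obstacle I anticipate is the bookkeeping in part (\ref{lem:Theta(2)}) when $w=\o{s_\alpha}$: choosing the positive root $\beta$ and verifying it lies in $R^+\setminus R_\lambda^+$ so that $\Theta_{\o u,\o{us_\alpha}}$ is indeed given by the second case of \eqref{eq:defofThetalambda} with the correct numerator and denominator.
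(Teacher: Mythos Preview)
Your proof is correct and follows essentially the same approach as the paper's: a direct case analysis on \eqref{eq:defofThetalambda} using Lemma \ref{lem:TDel} for part (\ref{lem:Theta(2)}), and Lemma \ref{lem:leftW}(\ref{lem:leftW(1)}) for part (\ref{lem:Theta(3)}). Your version is in fact more explicit---the paper dispatches the second case of (\ref{lem:Theta(2)}) by ``follows from the definition of $T_u$,'' whereas you carry out the root bookkeeping with $\beta=w_1^{-1}\alpha$ (using that $W_\lambda$ permutes $R^+\setminus R_\lambda^+$) in full, which is exactly the verification the paper suppresses.
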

\begin{proof}
Statement (\ref{lem:Theta(1)}) follows directly from the definition of $\Theta(\lambda)$.
Next, we need to verify Statement (\ref{lem:Theta(2)}) in each case of the definition of $\Theta(\lambda)$ \eqref{eq:defofThetalambda}. 
In the first case, we have $T_u(\Theta_{1, 1})=T_u(\Delta_\lambda)=\Delta_{u\lambda}=\Theta_{u, \o{u \cdot 1}}$ by Lemma \ref{lem:TDel} (\ref{lem:TDel(2)}); the second case follows from the definition of $T_u$; if the third case occurs for $\Theta_{1, w}$ (i.e. $\Theta_{1,w}=0$) then so is $\Theta_{u, \o{uw}}$.
Lastly, since every entry in $\Theta(\lambda)$ is fixed by the left Weyl group action $v^L$ by Lemma \ref{lem:leftW} (\ref{lem:leftW(1)}), Statement (\ref{lem:Theta(3)}) follows.
\end{proof}

Let
\begin{equation}
\varsigma_\lambda=\sum_{w\in W_\lambda}(-1)^{\ell(w)}\Stab_-(w),
\end{equation}
denote the \emph{averaging class}. 
Define
\begin{equation}
\Sigma(\lambda) = \big(v^L\circ T_u (\varsigma_\lambda)\big)_{{u},{v}\in W^\lambda}.
\end{equation}
Note that $T_u (\varsigma_\lambda)$ is not fixed\footnote{Using \cite[Theorem 4.3]{MR4465997} and \cite[Proposition 9.7]{aluffi2017shadows}, we can show that $s_i^L(\varsigma_\lambda)=\frac{\hbar+\alpha_i}{\hbar-\alpha_i}\varsigma_\lambda$ for any $s_i\in W_\lambda$. }
by $W_\lambda$.

\begin{Lemma}\label{lem:Sigma} 
For any $w\in W_\lambda$ and $u, v\in W^\lambda$, we have 
\begin{enumerate}[\quad\rm(1)]
\item \label{lem:Sigma(1)}
$T_w(\varsigma_\lambda)=\varsigma_\lambda$; 
\item \label{lem:Sigma(2)} 
$\Sigma(\lambda)_{{u},{v}} =
v^L\left(\sum_{w\in uW_\lambda} (-1)^{\ell(w)}\Stab_-(w^{-1})\right)$.
\end{enumerate}
\end{Lemma}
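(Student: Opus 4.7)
The plan is to derive both parts directly from Lemma~\ref{lem:TDel}(\ref{lem:TDel(1)}), which computes $T_u\bigl(\Stab_-(w)\bigr)=(-1)^{\ell(u)}\Stab_-(wu^{-1})$. Both statements are then essentially bookkeeping arguments combined with two standard facts about parabolic subgroups: first, that $W_\lambda$ is generated by the simple reflections $s_i$ with $\langle\lambda,\alpha_i^\vee\rangle=0$; second, that every $u\in W^\lambda$ satisfies $\ell(uy)=\ell(u)+\ell(y)$ for every $y\in W_\lambda$.

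For part~(\ref{lem:Sigma(1)}), I would reduce to $w=s_i\in W_\lambda$ using that $W$ acts on $QH^*_{\bbT}(T^*\calB)$ through the $T_w$'s, so that it suffices to verify $T_{s_i}(\varsigma_\lambda)=\varsigma_\lambda$ for each such simple reflection. By Lemma~\ref{lem:TDel}(\ref{lem:TDel(1)}),
\[
T_{s_i}(\varsigma_\lambda)=-\sum_{w\in W_\lambda}(-1)^{\ell(w)}\Stab_-(ws_i).
\]
The substitution $w\mapsto ws_i$ is an involution on $W_\lambda$, and since $\ell(ws_i)=\ell(w)\pm 1$ we have $(-1)^{\ell(ws_i)}=-(-1)^{\ell(w)}$. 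Reindexing then cancels both minus signs and returns $\varsigma_\lambda$.

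For part~(\ref{lem:Sigma(2)}), I would expand $T_u(\varsigma_\lambda)$ using Lemma~\ref{lem:TDel}(\ref{lem:TDel(1)}) to obtain
\[
T_u(\varsigma_\lambda)=(-1)^{\ell(u)}\sum_{w\in W_\lambda}(-1)^{\ell(w)}\Stab_-(wu^{-1}),
\]
then apply the length-preserving involution $w\mapsto w^{-1}$ on $W_\lambda$ and rewrite $w^{-1}u^{-1}=(uw)^{-1}$. The change of variable $z=uw$ sends $W_\lambda$ bijectively onto $uW_\lambda$, and because $u\in W^\lambda$ the identity $\ell(z)=\ell(u)+\ell(w)$ gives $(-1)^{\ell(w)}=(-1)^{\ell(u)}(-1)^{\ell(z)}$. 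The two factors of $(-1)^{\ell(u)}$ cancel, yielding
\[
T_u(\varsigma_\lambda)=\sum_{z\in uW_\lambda}(-1)^{\ell(z)}\Stab_-(z^{-1}),
\]
and applying $v^L$ gives the claimed formula for $\Sigma(\lambda)_{u,v}$.

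I do not foresee a serious obstacle: both parts are formal consequences of Lemma~\ref{lem:TDel}(\ref{lem:TDel(1)}). The only point requiring care is tracking the signs through the two changes of variable in part~(\ref{lem:Sigma(2)}), which is precisely where the minimal-length property $\ell(uy)=\ell(u)+\ell(y)$ of $W^\lambda$ is used to match parities.
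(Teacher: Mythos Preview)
Your proposal is correct and follows essentially the same route as the paper: both parts are derived from Lemma~\ref{lem:TDel}(\ref{lem:TDel(1)}) together with a sign-tracking argument. The only difference worth noting is that the paper uses the weaker (and universal) parity identity $\ell(wu^{-1})\equiv\ell(w)+\ell(u)\pmod 2$, valid for all $u,w\in W$, rather than the full length-additivity $\ell(uy)=\ell(u)+\ell(y)$ for $u\in W^\lambda$; this lets one skip the extra $w\mapsto w^{-1}$ substitution in part~(\ref{lem:Sigma(2)}) and handle part~(\ref{lem:Sigma(1)}) for arbitrary $w\in W_\lambda$ directly without reducing to simple reflections.
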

\begin{proof}
Both statements follow immediately from Lemma \ref{lem:TDel} and the fact that $\ell(wu^{-1})\equiv \ell(w)+\ell(u) \bmod 2$ for any $u, w\in W$.
\end{proof}
 Define the diagonal matrix
\begin{equation}
\Xi(\lambda)=\operatorname{diag}\big(u\lambda\big)_{{u}\in W^\lambda}.
\end{equation}
As an application of Theorem \ref{Swisring}, we have the following key proposition. 

\begin{Prop}\label{MB=BN}
For any $\lambda\in X^*(T)_\pm$, we have
\begin{equation}
\Theta(\lambda)*\Sigma(\lambda)=\Sigma(\lambda)*\Xi(\lambda),
\end{equation}
where the product of matrices is evaluated by the quantum product.
\end{Prop}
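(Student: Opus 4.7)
The plan is to verify the claimed matrix equality entry-by-entry. For each $u,v\in W^\lambda$ I want
\[
\sum_{w\in W^\lambda} \Theta_{u,w}(\lambda)*\Sigma_{w,v}(\lambda) = v\lambda\cdot \Sigma_{u,v}(\lambda),
\]
and I would proceed by two reductions followed by one explicit computation.

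First, I would eliminate the column index $v$ by applying $(v^L)^{-1}$. Every entry of $\Theta(\lambda)$ lies in the $v^L$-fixed subring: by Lemma~\ref{lem:leftW}~(\ref{lem:leftW(1)}), $v^L$ fixes line bundle classes $D_\mu$, quantum variables $q^{\alpha^\vee}$, and $\hbar$, so both the diagonal entries $\Delta_{u\lambda}$ and the off-diagonal entries $\frac{-\hbar\langle\lambda,\alpha^\vee\rangle}{1-q^{u\alpha^\vee}}$ are preserved. Using $\Sigma_{w,v}=v^L T_w(\varsigma_\lambda)$ together with the fact that $(v^L)^{-1}$ acts on $\ft^*\subset H^*_\bbT(\pt)$ by $v^{-1}$ so that $(v^L)^{-1}(v\lambda)=\lambda$, the ring-automorphism property of $v^L$ (Lemma~\ref{lem:leftW}~(2)) reduces the target to
\[
\sum_{w\in W^\lambda}\Theta_{u,w}(\lambda)*T_w(\varsigma_\lambda) = \lambda\cdot T_u(\varsigma_\lambda),\qquad u\in W^\lambda.
\]

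Second, I would reduce to $u=1$ by applying $T_{u^{-1}}$. By Theorem~\ref{Swisring}, $T_{u^{-1}}$ is a ring automorphism, and it is $H^*_\bbT(\pt)$-linear since the Hecke action on $H^*_\bbT(T^*\calB)$ is (as stated in Section~\ref{sec: springer}). Combined with Lemma~\ref{lem:Theta}~(\ref{lem:Theta(2)}), which gives $T_{u^{-1}}(\Theta_{u,w})=\Theta_{1,\o{u^{-1}w}}$, with Lemma~\ref{lem:Sigma}~(\ref{lem:Sigma(1)}), which gives $T_{u^{-1}}T_w(\varsigma_\lambda)=T_{\o{u^{-1}w}}(\varsigma_\lambda)$, and the bijection $w\mapsto \o{u^{-1}w}$ of $W^\lambda$, this collapses everything to the single identity
\[
\Delta_\lambda*\varsigma_\lambda+\sum_{\alpha\in R^+\setminus R^+_\lambda}\frac{-\hbar\langle\lambda,\alpha^\vee\rangle}{1-q^{\alpha^\vee}}*T_{s_\alpha}(\varsigma_\lambda)=\lambda\cdot\varsigma_\lambda.
\]

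Finally, I would verify this identity by direct computation. Expand $\Delta_\lambda*\varsigma_\lambda$ via the quantum Chevalley formula \eqref{eq:qChe} applied to each $\Stab_-(w)$ with $w\in W_\lambda$: since $w\lambda=\lambda$, the diagonal terms sum to $\lambda\cdot\varsigma_\lambda$. Expand $T_{s_\alpha}(\varsigma_\lambda)=-\sum_{w\in W_\lambda}(-1)^{\ell(w)}\Stab_-(ws_\alpha)$ using Lemma~\ref{lem:TDel}~(\ref{lem:TDel(1)}). The decisive combinatorial input is that $W_\lambda$ preserves $R^+\setminus R^+_\lambda$: it is generated by simple reflections $s_i$ with $\alpha_i\in R_\lambda$, and each such $s_i$ permutes the complement $R^+\setminus R^+_\lambda$ (it can flip only $\pm\alpha_i\in R_\lambda$). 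Hence for every $w\in W_\lambda$ and $\beta\in R^+\setminus R^+_\lambda$ one has $w\beta>0$, so the $\frac{q^{\beta^\vee}}{1-q^{\beta^\vee}}$ branch of the Chevalley formula never contributes. Only the $\frac{1}{1-q^{\beta^\vee}}$ branch survives, and after collecting terms it cancels precisely against the sum involving $T_{s_\alpha}(\varsigma_\lambda)$. The main obstacle is the bookkeeping at this final cancellation step, but the vanishing of the $\frac{q^{\beta^\vee}}{1-q^{\beta^\vee}}$ branch is exactly what makes the match clean.
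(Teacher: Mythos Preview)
Your proposal is correct and follows essentially the same approach as the paper: the paper first verifies the $(1,1)$-entry directly via the quantum Chevalley formula \eqref{eq:qChe} together with the facts $w\lambda=\lambda$, $\langle\lambda,\alpha^\vee\rangle=0$ for $\alpha\in R^+_\lambda$, and $w\alpha>0$ for $w\in W_\lambda$, $\alpha\in R^+\setminus R^+_\lambda$, and then obtains the general $(u,v)$-entry by applying the automorphism $v^L\circ T_u$. You run the same argument in reverse, reducing the $(u,v)$-entry to the $(1,1)$-entry by applying $(v^L)^{-1}$ and then $T_{u^{-1}}$; the key inputs (Theorem~\ref{Swisring}, Lemmas~\ref{lem:leftW}, \ref{lem:Theta}, \ref{lem:Sigma}, and the $W_\lambda$-stability of $R^+\setminus R^+_\lambda$) are identical.
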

\begin{proof}
Let us first compare the $(1,1)$-entry of both sides. By definition, 
\begin{align*}
&\quad (\Theta(\lambda)*\Sigma(\lambda))_{1,1}\\
& =\sum_{w\in W^\lambda}
\Theta(\lambda)_{1,w}*\Sigma(\lambda)_{w,1}\\
&=\Theta(\lambda)_{1,1}*\Sigma(\lambda)_{1,1} + \sum_{\alpha\in R^+\setminus R^+_\lambda}\Theta(\lambda)_{1,\o{s_\alpha}}*\Sigma(\lambda)_{\o{s_\alpha},1}\\
&=\sum_{w\in W_\lambda}(-1)^{\ell(w)}\Delta_\lambda*\Stab_-(w)
  +\hbar\!\!\!\!\sum_{\alpha\in R^+\setminus R^+_\lambda}\frac{\langle\lambda,\alpha^\vee\rangle}{1-q^{\alpha^\vee}}\sum_{w\in W_\lambda}(-1)^{\ell(w)}\Stab_-(ws_\alpha)\\
&=\lambda\cdot\varsigma_\lambda
=\lambda \cdot\Sigma(\lambda)_{1,1}.
\end{align*}
Here the third equality uses the equality $T_{\o{s_\alpha}}(\varsigma_\lambda)=T_{{s_\alpha}}(\varsigma_\lambda)$ by Lemma \ref{lem:Sigma} (\ref{lem:Sigma(1)}) as well as the fact $(-1)^{\ell(s_\alpha)}=-1$.
The forth equality follows from \eqref{eq:qChe}, and the facts that (i) $w(\lambda)=\lambda$ for any $w\in W_\lambda$, (ii) $\langle\lambda,\alpha^\vee\rangle=0$ for every $\alpha\in R^+_\lambda$, and (iii) $w\alpha>0$ for every $w\in W_\lambda$ and $\alpha\in R^+\setminus R^+_\lambda$.

For general $u,v\in W^\lambda$, by applying $v^L\circ T_u$ to the above equalities, we get
\begin{align*}
v(\lambda)\cdot \Sigma(\lambda)_{u,v}=v^L\circ T_u(\lambda\cdot \Sigma(\lambda)_{1,1})
&=v^L\circ T_u\bigg(\sum_{w\in W^\lambda}
\Theta(\lambda)_{1,w}*\Sigma(\lambda)_{w,1}\bigg)\\
& = \sum_{w\in W^\lambda}\Theta(\lambda)_{u,\o{uw}}*\Sigma(\lambda)_{\o{uw},v}\\
& = \sum_{w\in W^\lambda}
\Theta_{u,w}(\lambda)*\Sigma(\lambda)_{w,v}\\
& =(\Theta(\lambda)*\Sigma(\lambda))_{u,v}.
\end{align*}
Here the second equality follows from Lemmas \ref{lem:leftW}, \ref{lem:Theta}, \ref{lem:Sigma} and Theorem \ref{Swisring}.
This finishes the proof of the proposition.
\end{proof}

\begin{Eg}
Let $G=SL(2, \bbC)$, and use the same notation as in Example \ref{eg:autoP1}. 
Then Proposition \ref{MB=BN} gives
$$
\bigg[\,\begin{subarray}{c}
  \phantom{\frac{1}{2}}
\Delta_{\varpi}
  \phantom{\frac{1}{2}}\\
\frac{-\hbar}{1-q^{{-}1}}
\end{subarray}
\begin{subarray}{c}
\frac{-\hbar}{1-q}\\
  \phantom{\frac{1}{2}}
\Delta_{-\varpi}
  \phantom{\frac{1}{2}}
\end{subarray}\!\!\bigg]
*
\bigg[\begin{subarray}{c}
\Stab_-(\id)\\[0.5ex]
-\Stab_-(s)\\
\end{subarray}\,\,
\begin{subarray}{c}
s^L\Stab_-(\id)\\[0.5ex]
-s^L\Stab_-(s)
\end{subarray}\bigg]
=\bigg[\begin{subarray}{c}
\Stab_-(\id)\\[0.5ex]
-\Stab_-(s)\\
\end{subarray}\,\,
\begin{subarray}{c}
s^L\Stab_-(\id)\\[0.5ex]
-s^L\Stab_-(s)
\end{subarray}\bigg]
*
\bigg[\begin{subarray}{c}
\varpi\\[1ex]
\phantom{0}\\
\end{subarray}\,\,\,\,
\begin{subarray}{c}
\phantom{0}\\[1ex]
-\varpi
\end{subarray}\bigg]
$$
\end{Eg}

 The next lemma works for any commutative algebra  over an arbitrary field $\mathbb{F}$.

\begin{Lemma}\label{lem: relation} Let $\Theta,\Sigma$ be square matrices of order $m$ with entries in an $\mathbb{F}$-algebra $\mathcal{A}$, and  $\Xi=\operatorname{diag}(a_1,\ldots, a_m)$ with $a_1,\ldots,a_m$ distinct elements in $\mathbb{F}$.
Assume 
$$\Theta \cdot \Sigma=\Sigma \cdot \Xi. $$
If there is at least one unit in each column of $\Sigma$, then $\det(\Sigma)$ is a unit in $\mathcal{A}$.
\end{Lemma}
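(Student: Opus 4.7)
The plan is to reduce to the case of a field by passing to residue fields, using the standard characterization that $x \in \mathcal{A}$ is a unit if and only if $x$ lies in no maximal ideal of $\mathcal{A}$. Concretely, I will show $\det \Sigma \notin \mathfrak{m}$ for every maximal ideal $\mathfrak{m} \subset \mathcal{A}$.

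First, I would fix a maximal ideal $\mathfrak{m}$ and set $\mathbb{K} := \mathcal{A}/\mathfrak{m}$. Since $\mathcal{A}$ is an $\mathbb{F}$-algebra, the composition $\mathbb{F} \to \mathcal{A} \to \mathbb{K}$ is an inclusion of fields, so the scalars $a_1, \ldots, a_m \in \mathbb{F}$ remain pairwise distinct in $\mathbb{K}$. Denoting reduction modulo $\mathfrak{m}$ by overbars, the identity $\Theta \cdot \Sigma = \Sigma \cdot \Xi$ descends to $\bar\Theta \bar\Sigma = \bar\Sigma \bar\Xi$, which reads column-wise as $\bar\Theta \bar\sigma_j = a_j \bar\sigma_j$. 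Thus each column $\bar\sigma_j$ of $\bar\Sigma$ is an eigenvector (a priori possibly zero) of $\bar\Theta$ with eigenvalue $a_j$.

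Next I would invoke the hypothesis: each column of $\Sigma$ contains a unit of $\mathcal{A}$, and the image of a unit under $\mathcal{A} \twoheadrightarrow \mathbb{K}$ is necessarily nonzero. Hence every $\bar\sigma_j$ is a \emph{nonzero} eigenvector of $\bar\Theta$. Since nonzero eigenvectors attached to pairwise distinct eigenvalues are linearly independent over a field, $\bar\sigma_1, \ldots, \bar\sigma_m$ form a basis of $\mathbb{K}^m$, so $\det \bar\Sigma \neq 0$, i.e.\ $\det \Sigma \notin \mathfrak{m}$. As this holds for every maximal ideal of $\mathcal{A}$, I conclude that $\det \Sigma$ is a unit in $\mathcal{A}$.

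The argument is essentially direct once the correct reduction is identified, and I do not foresee a serious obstacle. The only mild subtlety worth flagging is that the distinctness of the $a_i$ must persist in every residue field, which is automatic because $\mathbb{F}$ is a field and so embeds into each $\mathcal{A}/\mathfrak{m}$; this is precisely where the $\mathbb{F}$-algebra hypothesis enters.
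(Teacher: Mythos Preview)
Your argument is correct and essentially identical to the paper's own proof: both pass to a residue field $\mathcal{A}/\mathfrak{m}$, observe that the columns of $\bar\Sigma$ are nonzero (since units survive) eigenvectors of $\bar\Theta$ with distinct eigenvalues, and conclude that $\det\bar\Sigma\neq 0$. The only cosmetic difference is that the paper phrases it by contradiction (assume $\det\Sigma$ lies in some $\mathfrak{m}$), whereas you quantify directly over all maximal ideals; you also make explicit the point that $\mathbb{F}\hookrightarrow\mathcal{A}/\mathfrak{m}$, which the paper leaves implicit.
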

\begin{proof}
Assume $\det(\Sigma)$ is not a unit in $\mathcal{A}$.
Then there is a maximal ideal $\mathfrak{m}$ containing $\det(\Sigma)$, so that $\overline {\mathcal{A}}:=\mathcal{A}/\mathfrak{m}$ is a field and 
$\overline{\det(\Sigma)}=0\in \overline{\mathcal{A}}$. Note that the columns of $\overline{\Sigma}$ are all nonzero vectors by our assumption, and hence are eigenvectors of $\overline{\Theta}$ of distinct eigenvalues by the hypothesis.
Therefore they form a basis of $\overline{\mathcal{A}}^m$. In other words, $ \overline{\det(\Sigma)}=\det(\overline{\Sigma})$ is invertible, resulting in a contradiction.
\end{proof}

Let us denote $\mathbb{F}=\mbox{Frac} H^*_{\bbT}(\pt)(\!(\hbar^{-1})\!)$, and consider the $\mathbb{F}$-algebra 
\begin{equation}\label{eq:defofcalA}
  \mathcal{A}=QH_{\bbT}^*(T^*\calB)\otimes_{H_{\bbT}^*(\pt)}\mathbb{F}.
\end{equation}

\begin{Prop}\label{detLemma} 
For any $\lambda\in X^*(T)_\pm$, the determinant $\det\Sigma(\lambda)$ is a unit in $\mathcal{A}$.
\end{Prop}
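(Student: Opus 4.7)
The plan is to apply Lemma \ref{lem: relation} directly to the matrix identity $\Theta(\lambda)*\Sigma(\lambda)=\Sigma(\lambda)*\Xi(\lambda)$ of Proposition \ref{MB=BN}, viewed in the $\mathbb{F}$-algebra $\mathcal{A}$. The diagonality hypothesis is immediate: since $\lambda\in X^*(T)_{\pm}$ has $W_\lambda$ as its exact stabilizer in $W$, the $|W^\lambda|$ entries $u\lambda$ of $\Xi(\lambda)$ are pairwise distinct in $\mathbb{F}$. The substantive task is to exhibit, in each column of $\Sigma(\lambda)$, an entry that is a unit in $\mathcal{A}$.

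I would take the top row: $\Sigma(\lambda)_{e,v}=v^L(\varsigma_\lambda)$, and since each $v^L$ is a ring automorphism of $\mathcal{A}$ by Lemma \ref{lem:leftW}, this reduces the problem to showing that $\varsigma_\lambda$ itself is a unit in $\mathcal{A}$. I would attack this via the top-degree coefficient of $\varsigma_\lambda$ as a polynomial in $\hbar$. Each $\Stab_-(w)\in H^{2\dim\calB}_{\bbT}(T^*\calB)$, written as a polynomial in $\hbar$ with coefficients in $H^*_T(\calB)$, has $\hbar$-degree at most $\ell(w_0)=\dim\calB$, with the top coefficient lying in $H^0(\calB)=\mathbb{C}$. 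Proposition \ref{rem:stab}(\ref{def:stab(2)}) gives $\Stab_-(e)|_e=\prod_{\alpha>0}(\alpha-\hbar)$ whose leading term $(-\hbar)^{\ell(w_0)}$ contributes $(-1)^{\ell(w_0)}$; for any $w\in W_\lambda\setminus\{e\}$, Proposition \ref{rem:stab}(\ref{def:stab(1)}) gives $\Stab_-(w)|_e=0$, so the $\hbar^{\ell(w_0)}$-coefficient of $\Stab_-(w)$ must vanish. Summing,
\[
\varsigma_\lambda=(-\hbar)^{\ell(w_0)}+\text{terms of strictly lower $\hbar$-degree}\quad\text{in}\quad H^*_{\bbT}(T^*\calB).
\]

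The hard part will be converting this top-term information into genuine unit-hood in $\mathcal{A}$. I would use Proposition \ref{thm:BMO}(\ref{thm:BMO(3)}) to decompose $\mathcal{A}$: extending the Braverman--Maulik--Okounkov identification $QH^*_{\bbG}(T^*\calB)\cong\calO(T^\vee_\reg\times\ft)[\hbar]$ to $\mathbb{F}$-scalars trivializes the Galois $W$-cover $\ft\to\ft/W$ (possible since $\mathbb{F}\supset\Frac(\Sym(\ft^*)[\hbar])$) and yields a product decomposition $\mathcal{A}\cong(\calO(T^\vee_\reg)\otimes\mathbb{F})^{|W|}$ into integral domains. Unit-hood of $\varsigma_\lambda$ in $\mathcal{A}$ then reduces factor by factor to non-vanishing in each residue field, and this follows because the leading $\hbar^{\ell(w_0)}$-coefficient $(-1)^{\ell(w_0)}$ is preserved under specialization while the $\hbar^{-1}$-adic valuation inherited from $\mathbb{F}$ prevents a polynomial in $\hbar$ of this form from vanishing. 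Once each $\Sigma(\lambda)_{e,v}$ is verified to be a unit, Lemma \ref{lem: relation} delivers $\det\Sigma(\lambda)\in\mathcal{A}^\times$.
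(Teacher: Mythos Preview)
Your approach is essentially the paper's: verify the hypotheses of Lemma~\ref{lem: relation} by showing that the first-row entries $\Sigma(\lambda)_{e,v}$ are units via their leading $\hbar$-term. Two of your moves are pleasant refinements over the paper. First, reducing $\Sigma(\lambda)_{e,v}=v^L(\varsigma_\lambda)$ to the single case $v=e$ via the ring automorphism $v^L$ is cleaner than treating each $v$ separately. Second, you extract the leading term $(-\hbar)^{\dim\calB}$ directly from Proposition~\ref{rem:stab}: the $\hbar^{\dim\calB}$-coefficient of any $\Stab_-(w)$ lies in $H^0_T(\calB)=\bbC$, restriction at the identity fixed point detects it, and the support condition kills it for $w\neq e$. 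The paper instead cites \cite{aluffi2017shadows} for the same conclusion, so your argument is a self-contained substitute.

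The one place to tighten is the passage from ``leading $\hbar$-term invertible'' to ``unit in $\mathcal{A}$''. Your product decomposition $\mathcal{A}\cong(\calO(T^\vee_{\reg})\otimes\bbF)^{|W|}$ via Proposition~\ref{thm:BMO}(\ref{thm:BMO(3)}) is correct, and you are right that unit-hood is equivalent to non-vanishing in every residue field. But the $|W|$ projections $\pi_w$ are ring homomorphisms for the \emph{quantum} product, so you cannot directly read off that the lower-order piece $\gamma'=\varsigma_\lambda-(-\hbar)^{\dim\calB}$ maps to something of $\hbar^{-1}$-adic valuation strictly greater than $-\dim\calB$: a priori $\pi_w(\gamma')$ could pick up extra powers of $\hbar$ from the quantum corrections. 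One way to close this is to observe that each residue field is a finite extension of $\bbF=K(\!(\hbar^{-1})\!)$ carrying a unique extension of the valuation, and that the grading on $QH^*_{\bbT}(T^*\calB)$ (with $\deg q=0$) bounds the $\hbar$-degree of the image of any degree-$2k$ class by $k$; applying this to the homogeneous pieces of $\gamma'$ gives the needed strict inequality. The paper's own ``Hence, it is a unit in $\mathcal{A}$'' is equally terse on this point.
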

\begin{proof} For any $v\in W^\lambda$, by \cite[Proposition 9.7 and Lemma 7.3]{aluffi2017shadows}, we have
$$\Sigma(\lambda)_{1,{v}} = (-1)^{\dim \calB}\hbar^{\dim \calB}+\big(\text{terms of lower degrees in $\hbar$}\big),$$
Hence, it is a unit in $\mathcal{A}$.
Thus every column of $\Sigma(\lambda)$ contains a unit in $\mathcal{A}$ (at the first row). 
Moreover, the diagonal terms of $\Xi(\lambda)$ are distinct elements in $\bbF$. 
By Lemma \ref{lem: relation}, $\det\Sigma(\lambda)$ is a unit in $\mathcal{A}$. 
\end{proof}

\begin{Prop}
 \label{Mainprop}
For any $\lambda\in X^*(T)_\pm$ and $k\in \mathbb{Z}_{>0}$, in $QH^*_{\bbT}(T^*\calB)$, we have
\begin{equation} \label{mainrelation}
\operatorname{tr}\big(\Theta(\lambda)^k\big)=\sum_{{u}\in W^\lambda} (u\lambda)^k.
\end{equation}
\end{Prop}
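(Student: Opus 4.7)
\smallbreak

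The plan is to deduce Proposition \ref{Mainprop} from Propositions \ref{MB=BN} and \ref{detLemma} by a conjugate-and-trace argument carried out in the localization $\mathcal{A}$ of \eqref{eq:defofcalA}. First I would work inside the matrix algebra $\operatorname{Mat}_{|W^\lambda|}(\mathcal{A})$ under the quantum product (which is commutative on the base $\mathcal{A}$, so the usual linear-algebraic identities apply). Proposition \ref{detLemma} makes $\Sigma(\lambda)$ invertible there, and Proposition \ref{MB=BN} then yields the conjugation identity
\[
\Theta(\lambda) = \Sigma(\lambda) * \Xi(\lambda) * \Sigma(\lambda)^{-1}
\]
inside $\operatorname{Mat}_{|W^\lambda|}(\mathcal{A})$.

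Raising to the $k$-th power and using cyclicity of the trace (valid because $\mathcal{A}$ is commutative) gives
\[
\operatorname{tr}\bigl(\Theta(\lambda)^k\bigr)
= \operatorname{tr}\bigl(\Sigma(\lambda) * \Xi(\lambda)^k * \Sigma(\lambda)^{-1}\bigr)
= \operatorname{tr}\bigl(\Xi(\lambda)^k\bigr)
= \sum_{u\in W^\lambda}(u\lambda)^k,
\]
which is the desired equality, but a priori only in $\mathcal{A}$.

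Finally, to descend back to $QH^*_{\bbT}(T^*\calB)$, I would verify that the localization map $QH^*_{\bbT}(T^*\calB)\to\mathcal{A}$ is injective. By \eqref{eq:HTGBh} and the definition of the quantum ring, $QH^*_{\bbT}(T^*\calB)$ is a free module over $H^*_{\bbT}(\pt)=\Sym(\ft^*)[\hbar]$ (with basis, for instance, coming from Schubert classes tensored with a $\calO(T^\vee_{\reg})$-basis), and $H^*_{\bbT}(\pt)$ embeds into $\mathbb{F}=\operatorname{Frac}H^*_{\bbT}(\pt)(\!(\hbar^{-1})\!)$. Flatness then gives the required injectivity. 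Both sides of \eqref{mainrelation} already live in $QH^*_{\bbT}(T^*\calB)$ — the left is a polynomial in the entries of $\Theta(\lambda)$, and the right lies in $\Sym(\ft^*)\subset H^*_{\bbT}(\pt)$ — so their equality in $\mathcal{A}$ pulls back to equality in $QH^*_{\bbT}(T^*\calB)$.

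The only genuinely delicate step is formal linear algebra inside a matrix ring over $\mathcal{A}$: one must confirm that the standard identities $\operatorname{tr}(XYZ)=\operatorname{tr}(ZXY)$ and $(PXP^{-1})^k=PX^kP^{-1}$ still hold when the matrix entries multiply under $*$. This is automatic since the quantum product on $QH^*_{\bbT}(T^*\calB)$ is commutative and associative; the potential obstruction — noncommutativity of entries — simply does not arise here. Once this is noted, the proof reduces to combining Propositions \ref{MB=BN} and \ref{detLemma} with the injectivity of the localization.
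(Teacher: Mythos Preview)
Your proof is correct and follows essentially the same route as the paper: invoke Proposition~\ref{detLemma} to invert $\Sigma(\lambda)$ in $\mathcal{A}$, use Proposition~\ref{MB=BN} to conjugate $\Theta(\lambda)$ to $\Xi(\lambda)$, take traces, and then descend via the injection $QH^*_{\bbT}(T^*\calB)\hookrightarrow\mathcal{A}$. Your extra justification of that injectivity (freeness over $H^*_{\bbT}(\pt)$) and of the validity of the standard matrix identities over the commutative ring $(\mathcal{A},*)$ are welcome elaborations of points the paper leaves implicit.
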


\begin{proof}
By Proposition \ref{detLemma}, $\det\Sigma(\lambda)$ is a unit in $\mathcal{A}$, so that $\Sigma(\lambda)$ is invertible.
Therefore by Proposition \ref{MB=BN}, we have the following relation in $\mathcal{A}$:
$$\operatorname{tr} \big(\Theta(\lambda)^k\big)=\operatorname{tr} \big(\Sigma(\lambda)^{-1}* \Theta(\lambda)^k *\Sigma(\lambda)\big)=\operatorname{tr} \big(\Xi(\lambda)^k\big)=\sum\nolimits_{u\in W^\lambda} (u\lambda)^k.$$ 
 Notice that there is a natural embedding 
 $QH^*_{\mathbb{T}}(T^*\calB)\hookrightarrow QH^*_{\mathbb{T}}(T^*\calB)\otimes_{H_{\bbT}(\pt)} \mathbb{F}=\mathcal{A}$, and both sides of the
 equality \eqref{mainrelation} take values in $QH^*_{\bbT}(T^*\calB)$,
 the statement follows.
\end{proof}

\subsection{Proof of Theorem \ref{qHpre}} By Proposition \ref{Mainprop}, all $\mathcal{R}_\lambda^k$ are in the kernel of $\pi$. 
Once we prove that they generate the entire kernel, this will finish our proof.
The following Deformation Principle is a key ingredient in our argument, the proof of which is similar to that of a lemma by Siebert and Tian \cite{MR1621570}.

\begin{Prop}[Deformation Principle]\label{DefPrin}
Let $\{\mathcal{R}_i\}_{i\in I}$ be a family of elements in $\ker \pi$. Assume for all $i\in I$
$$\mathcal{R}_i = f_i(D_{\mu})-f_i(\mu)\mod \hbar,$$
where $\{f_i\}_{i\in I}$ generates $\Sym(\ft^*)^W$. 
Then $\ker \pi$ is generated by $\{\mathcal{R}_i\}_{i\in I}$. 
\end{Prop}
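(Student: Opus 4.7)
The plan is to identify $A := \calO(T^\vee_{\reg})[\hbar][\mu, D_\mu]/\langle \mathcal{R}_i : i\in I\rangle$ with $B := QH^*_{\bbT}(T^*\calB)$ via the induced surjection $\bar\pi : A \twoheadrightarrow B$ (which is defined precisely because each $\mathcal{R}_i \in \ker\pi$), by checking the isomorphism modulo $\hbar$ and then lifting it using a graded Nakayama argument. The ambient ring is graded with $\deg\mu=\deg D_\mu=\deg\hbar=2$ and $\deg q^{\alpha^\vee}=0$, which makes $A$ and $B$ into non-negatively graded $\calO(T^\vee_{\reg})$-algebras (concentrated in even degrees) and $\bar\pi$ a graded homomorphism; observe that each $\mathcal{R}_i$ is homogeneous of degree $2\deg f_i$, so this grading is well-defined on $A$.

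For the reduction modulo $\hbar$: on the target side, the quantum Chevalley formula of Proposition \ref{thm:BMO}(\ref{thm:BMO(1)}) shows that every quantum correction carries an explicit factor of $\hbar$, and combined with $H^*_{\bbT}(T^*\calB)=H^*_T(\calB)[\hbar]$ from \eqref{eq:HTGBh} this yields $B/\hbar B \cong H^*_T(\calB)\otimes_{\bbC}\calO(T^\vee_{\reg})$. On the source side, since $\{f_i\}$ generates $\Sym(\ft^*)^W$ as a $\bbC$-algebra, the elementary identity $P(\bar a)-P(\bar b)\in \langle a_j-b_j\rangle$ in any commutative ring shows that the ideal generated by $\{f_i(D_\mu)-f_i(\mu)\}$ in $\calO(T^\vee_{\reg})[\mu,D_\mu]$ coincides with the full Borel ideal $\langle f(D_\mu)-f(\mu):f\in\Sym(\ft^*)^W\rangle$. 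Using the hypothesis $\mathcal{R}_i\equiv f_i(D_\mu)-f_i(\mu)\pmod\hbar$ together with Proposition \ref{Th:BorelTh} gives $A/\hbar A \cong H^*_T(\calB)\otimes_{\bbC}\calO(T^\vee_{\reg})$. Since the induced map $A/\hbar A \to B/\hbar B$ sends $D_\mu$ to $D_\mu$ and is compatible with both Borel isomorphisms, it is an isomorphism.

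To lift this to an isomorphism $\bar\pi:A\to B$, let $K=\ker\bar\pi$, so $0\to K\to A\to B\to 0$ is exact. The key observation is that $B$ is $\hbar$-torsion-free: indeed $B=\calO(T^\vee_{\reg})\otimes_{\bbC} H^*_T(\calB)[\hbar]$ is free over $\bbC[\hbar]$, a consequence of the cotangent-bundle structure recorded in \eqref{eq:HTGBh}. Tensoring the short exact sequence with $\cdot/\hbar$ is therefore left-exact, producing $K/\hbar K \hookrightarrow A/\hbar A \xrightarrow{\,\sim\,} B/\hbar B$; hence $K=\hbar K$. Because $K$ is non-negatively graded and $\hbar$ has strictly positive degree, iterating gives $K^d\subseteq \hbar^n K^{d-2n}$ for every $n$, which forces $K^d=0$ once $2n>d$. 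Thus $K=0$, $\bar\pi$ is an isomorphism, and $\ker\pi=\langle \mathcal{R}_i:i\in I\rangle$.

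The main obstacle is establishing the $\hbar$-flatness of $B$ needed for the snake-lemma step, since without it $K$ could contain $\hbar$-torsion invisible to the mod-$\hbar$ reduction; this is handled cleanly by the identification $H^*_{\bbT}(T^*\calB)=H^*_T(\calB)[\hbar]$ special to the Springer resolution. A secondary bookkeeping point is ensuring that the ideal generated by $\{f_i(D_\mu)-f_i(\mu)\}$ really exhausts the Borel relations, which I handle via the polynomial identity above. Both points require only structural input from the earlier sections, so the argument is essentially an application of the Siebert--Tian-type principle tailored to the grading available on $QH^*_{\bbT}(T^*\calB)$.
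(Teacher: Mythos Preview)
Your argument is correct and follows the same Siebert--Tian deformation strategy as the paper: reduce modulo $\hbar$ to the Borel presentation, then lift using the $\hbar$-torsion-freeness of $QH^*_{\bbT}(T^*\calB)$. The paper packages the lifting step as a minimal-$\hbar$-degree contradiction on $\ker\pi\setminus J$, whereas you run it as a Tor/graded-Nakayama argument on $K=\ker\bar\pi$; these are equivalent formulations of the same idea.

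One caveat: your assertion that ``each $\mathcal{R}_i$ is homogeneous of degree $2\deg f_i$'' is not part of the stated hypotheses (neither the $f_i$ nor the $\mathcal{R}_i$ are assumed homogeneous), so the grading on $A$---and hence the graded-Nakayama step $K=\hbar K\Rightarrow K=0$---is not justified as written. The paper's minimal-$\hbar$-degree argument carries the same implicit reliance on homogeneity to guarantee that the degree actually drops, and in every application in the paper the $\mathcal{R}_i$ are indeed homogeneous, so this is a cosmetic point rather than a genuine gap.
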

\begin{proof}
We have the following diagram
$$\xymatrix{
\calO(T_{\reg}^\vee)[\hbar][\mu,D_{\mu}]_{\mu}\ar[r]^{\quad\pi}\ar[d]& QH_{\bbT}^*(T^*\calB)\ar[d]\\
\calO(T_{\reg}^\vee)[\mu,D_{\mu}]_{\mu}\ar[r]_{\varphi}& QH_{T}^*(T^*\calB),
}$$
where the vertical maps are modulo $\hbar$.
 Since $T^*\calB$ is a homomorphic symplectic variety, the quantum product and the equivariant product coincide modulo $\hbar$ \cite{MR2782198}. 
Hence, by Proposition \ref{Th:BorelTh} and isomorphism \eqref{eq:HTGBh}, we see that
$$\ker\varphi =\big<f(D_{\mu})-f(\mu)\mid f(\mu)\in \Sym(\ft^*)^W\big>.$$
 
Let $J$ be the ideal generated by $\{\mathcal{R}_i\}_{i\in I}$. 
Assume $J\neq \ker \pi$. Let $\gamma$ be an element of minimal $\hbar$-degree such that $\gamma\in \ker \pi\setminus J$. By the commutativity of the above diagram,
\[\varphi(\gamma\bmod \hbar)=\pi(\gamma)\bmod \hbar =0.\]
Note that $J/\left<\hbar\right>=\ker \varphi$ by our assumption.  
We can write
$$\gamma=\gamma_1+\hbar\gamma_2$$
for $\gamma_1\in J$ and some $\gamma_2\notin J$ of $\hbar$-degree less than $\gamma$. Since $J\subset \ker \pi$, we have $\hbar\pi(\gamma_2)=0$, which implies $\pi(\gamma_2)=0$. Hence, $\gamma_2\in \ker\pi\setminus J$, and the $\hbar$-degree of $\gamma_2$ is less than that of $\gamma$, which contradicts the choice of $\gamma$.
\end{proof}

\begin{proof}[Proof of Theorem \ref{qHpre}]
First of all, we have $\mathcal{R}_{\lambda}^k\in \ker \pi$ by Proposition \ref{Mainprop}. Modulo $\hbar$, the matrix $\Theta(\lambda)$ becomes a diagonal matrix with entries $D_{u\lambda}$ for ${u}\in W^{\lambda}$. Hence,
$$\mathcal{R}_{\lambda}^k=
\sum_{ {u}\in W^\lambda} (D_{u\lambda})^k
-\sum_{ {u}\in W^\lambda} (u\lambda)^k\mod \hbar.$$
By {\cite[Section 23.1]{MR499562}},
the set of polynomials
 $\sum_{u\in W^\lambda} (u\lambda)^k$
for all dominant weights $\lambda$ and $k\geq 1$ spans the space $\Sym(\ft^*)^W$.
Therefore the family of all $\mathcal{R}_{\lambda}^k$ satisfies the condition of the Deformation Principle \ref{DefPrin}. We can conclude that all $\mathcal{R}_{\lambda}^k$ generate the kernel of $\pi$.
\end{proof}

\subsection{Explicit formulae for the classical Calogero--Moser map}\label{sec:CMmap}
In this subsection, we will give an explicit formula for the classical trigonometric Calogero--Moser map \eqref{equ:etaCcl}.
Recall that $q^{\alpha^\vee}$ is naturally a function over $T^\vee_{\reg}$ for any coroot $\alpha^\vee$
and ${p}_{\lambda}$ defined in \eqref{eqn:partial} is the linear function over $\ft$.
Both of them can be viewed as functions over $T^\vee_{\reg}\times \ft$ naturally.

Let $Y(\lambda)$ be the matrix obtained from $\Theta(\lambda)$ by replacing $\Delta_{u\lambda}$ by $p_{u\lambda}$ in \eqref{eq:defofThetalambda}. 
By Proposition \ref{thm:BMO} (\ref{thm:BMO(3)}), we get immediately

\begin{Coro}\label{thm:classicalcm}
We have 
$$\eta_{\CM,\hbar}^{\cl}\bigg(\sum_{u\in W^{\lambda}}(u\lambda)^k\bigg)=\operatorname{tr}(Y(\lambda)^k)\in \calO(T^\vee_{\reg}\times \ft)[\hbar].$$
\end{Coro}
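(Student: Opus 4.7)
The plan is to derive the corollary as a direct translation of Proposition \ref{Mainprop} through the ungauged version of the isomorphism from Proposition \ref{thm:BMO}(\ref{thm:BMO(3)}), as formulated in the subsequent Remark. The starting point is the identity
\[
\operatorname{tr}\bigl(\Theta(\lambda)^k\bigr) \;=\; \sum_{u\in W^{\lambda}}(u\lambda)^k
\]
holding in $QH^*_{\bbT}(T^*\calB)$ for every dominant weight $\lambda$ and every $k\geq 1$. Both sides in fact lie in the $\bbG$-equivariant subring $QH^*_{\mathbb{G}}(T^*\calB)$: the right-hand side is a $W$-invariant polynomial in $\Sym(\ft^*)$, hence belongs to $H^*_{\mathbb{G}}(\pt)\subset H^*_{\bbT}(\pt)$, and the left-hand side equals it.

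Next, I would invoke the variant of Proposition \ref{thm:BMO}(\ref{thm:BMO(3)}) described in the Remark: there is an $\calO(T^\vee_{\reg})$-algebra isomorphism
\[
\calO(T^\vee_{\reg}\times \ft)[\hbar] \;\xrightarrow{\sim}\; QH^*_{\mathbb{G}}(T^*\calB), \qquad p_{\mu}\longmapsto \Delta_{\mu},
\]
under which the $H^*_{\mathbb{G}}(\pt)$-module structure on the left is supplied precisely by the (non-gauged) classical Calogero--Moser map $\eta^{\cl}_{\CM,\hbar}$. By the definition of $Y(\lambda)$, which is obtained from $\Theta(\lambda)$ by swapping each diagonal entry $\Delta_{u\lambda}$ for $p_{u\lambda}$ (off-diagonal entries being polynomial in $q^{\alpha^\vee}, (1-q^{\alpha^\vee})^{-1}$ and thus fixed by the isomorphism), this map sends $Y(\lambda)\mapsto \Theta(\lambda)$ entry-wise, and consequently $\operatorname{tr}(Y(\lambda)^k)\mapsto \operatorname{tr}(\Theta(\lambda)^k)$.

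Finally, the element $\sum_{u\in W^\lambda}(u\lambda)^k\in \Sym(\ft^*)^W = H^*_{\mathbb{G}}(\pt)$ acts on $1\in \calO(T^\vee_{\reg}\times \ft)[\hbar]$ as multiplication by $\eta^{\cl}_{\CM,\hbar}\bigl(\sum_{u\in W^\lambda}(u\lambda)^k\bigr)$, and under the isomorphism this corresponds to the same element viewed inside $QH^*_{\mathbb{G}}(T^*\calB)$. Matching the two sides of Proposition \ref{Mainprop} under the isomorphism immediately yields
\[
\operatorname{tr}\bigl(Y(\lambda)^k\bigr) \;=\; \eta^{\cl}_{\CM,\hbar}\Bigl(\sum_{u\in W^\lambda}(u\lambda)^k\Bigr),
\]
which is the claim. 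There is no real obstacle here; the one subtlety to keep track of is to use the non-gauged isomorphism (sending $p_\mu\mapsto \Delta_\mu$ and carrying the structure map $\eta^{\cl}_{\CM,\hbar}$) rather than the one sending $p_\mu\mapsto D_\mu$ (which carries $\overline{\eta}^{\cl}_{\CM,\hbar}$). Using the gauged version would shift the diagonal entries and produce the wrong formula.
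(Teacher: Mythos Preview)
Your argument is correct and matches the paper's approach: the paper simply states that the corollary follows immediately from Proposition~\ref{thm:BMO}(\ref{thm:BMO(3)}), and you have spelled out exactly how, via the ungauged form of the isomorphism in the Remark following that proposition. The one substantive point you make explicit---that the isomorphism sending $p_\mu\mapsto\Delta_\mu$ carries $Y(\lambda)$ to $\Theta(\lambda)$ entrywise, while the structure map is $\eta^{\cl}_{\CM,\hbar}$ rather than $\overline{\eta}^{\cl}_{\CM,\hbar}$---is precisely the content the paper leaves implicit.
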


\begin{Rmk}\label{Rmk:Etingof}
P. Etingof shared with us in private communication that our matrix $Y(\lambda)$ can be viewed as the matrix of the Dunkl operator under the basis $W$ of $\calO(T^\vee_{\reg}\times \ft)[\hbar]\rtimes \mathbb{C}[W]$, provided that $\lambda$ is strictly dominant (i.e., $W_{\lambda}$ is trivial). In the rational case (analogous to our case), this method yields the known explicit formula for the rational Calogero--Moser map in both classical and quantum cases, obtained based on Lax pairs \cite{shastry1993super}. In the trigonometric case (our case), Corollary \ref{thm:classicalcm} can be translated to the following trace-free formula:
\begin{equation}\label{eq:tracefree}
\operatorname{tr}\left(
\sum_{w\in W} \Dun_{w\lambda}^k-
\sum_{w\in W} w\Dun_{\lambda}^kw^{-1}\right)=0.
\end{equation}
Recall that $\Dun_{\lambda}$ is the classical trigonometric Dunkl operator defined in \eqref{eq:cltriDunkop}.
It is worth noting that the trigonometric Dunkl operators and Weyl group elements generate the degenerate Hecke algebra $\calH_{\hbar}$, and thus, unlike the rational case, the two sums in \eqref{eq:tracefree} are not equal.
To relate our result to the above formula, we need to note that
\begin{align*}
\operatorname{tr}(Y(\lambda)^k)
& =\operatorname{tr}(\Dun_{\lambda}^k)
= \frac{1}{|W|}\operatorname{tr}\left(\sum_{w\in W} w\Dun_{\lambda}^kw^{-1}\right).
\end{align*}
On the other hand, by extending \cite[Lemma 2.2]{MR2769318} to the trigonometric case, we can conclude that $\sum_{w\in W} \Dun_{w\lambda}^k$ is the scalar matrix of $\eta_{\CM,\hbar}^{\cl}\left(\sum_{w\in W}(w\lambda)^k\right)$. Hence,
\begin{align*}
\eta_{\CM,\hbar}^{\cl}\left(\sum_{w\in W}(w\lambda)^k\right)
& = \frac{1}{|W|}\operatorname{tr}\left(\sum_{w\in W} \Dun_{w\lambda}^k\right).
\end{align*}
Corollary \ref{thm:classicalcm} is thus equivalent to \eqref{eq:tracefree}. 
It is interesting to ask for a direct proof of \eqref{eq:tracefree}. 
Additionally, it should be noted that the trace-free formula \eqref{eq:tracefree} is not valid for the quantum case.

In practical computation, rather than substituting square matrices of size $|W|$ into a symmetric polynomial, our matrix is small and involves only matrix powers. This reduces the computational complexity significantly.
\end{Rmk}

\begin{Eg}[Hamiltonian]
Let $\lambda\in X^*(T)_+$ be strictly dominant such that $W_\lambda$ is trivial.
By direct computations, we have
\begin{align*}
\operatorname{tr}\big(Y(\lambda)^2\big)
& =\sum_{w\in W} {p}_{w\lambda}^2
+\hbar^2\sum_{w\in W}
\sum_{\alpha>0}\left<\lambda,\alpha^\vee\right>^2
\frac{1}{\big(1-q^{w\alpha^\vee}\big)\big(1-q^{-w\alpha^\vee}\big)}\\
& = \sum_{w\in W} {p}_{w\lambda}^2
-\hbar^2
\sum_{\alpha>0}
\bigg(\sum_{w\in W}\left<w\lambda,\alpha^\vee\right>^2\bigg)
\frac{1}{\big(q^{\alpha^\vee/2}-q^{-\alpha^\vee/2}\big)^2}.
\end{align*}
Let $C \in \Sym^2(\ft^*)$ be a non-degenerate quadratic form, which is unique up to a scalar.
Thus,
$$0\neq \sum_{w\in W} (w\lambda)^2=c_{\lambda}\cdot C\in \Sym^2(\ft^*),$$
for some constant $c_\lambda$.
Then we can rewrite the above by
$$
c_{\lambda}\cdot\left(C({p})- \hbar^2
\sum_{\alpha>0}
\frac{C(\alpha^\vee,\alpha^\vee)}{\big(q^{\alpha^\vee/2}-q^{-\alpha^\vee/2}\big)^2}
\right).$$
Corollary \ref{thm:classicalcm} shows
$$\eta_{\CM,\hbar}^{\cl}(C)=C({p})- \hbar^2
\sum_{\alpha>0}
\frac{C(\alpha^\vee,\alpha^\vee)}{\big(q^{\alpha^\vee/2}-q^{-\alpha^\vee/2}\big)^2},$$
which is the Hamiltonian of the classical Calogero--Moser system \eqref{equ:etaCcl}.
\end{Eg}

\section{Computation for Classical Types}\label{sec:ABCD}

In this section, we compute $QH_{\bbT}^*(T^*\calB)$ for groups $G$ of classical types. 

\subsection{Type $A_{n-1}$}

Recall that $G=SL(n,\mathbb{C})$ in type $A_{n-1}$. 
While for the convenience of our computation, we prefer to use $GL$-notations. 
Let $\hat{T}$ be the subgroup of diagonal matrices of $\hat{G}:=GL(n,\mathbb{C})$ and $\hat{\ft}$ be its Lie algebra.  
We denote $\hat{\bbG}=\hat{G}\times\bbC^*$ and $\hat{\bbT}=\hat{T}\times \bbC^*$. 
By carefully examining the proof, all the argument in this paper works for $GL$ with minor modification after replacing $\ft$ by $\hat{\ft}$ in all the arguments. 

Let us denote $\mathbf{e}_i\in X^*(\hat{T})$ the projection of the $i$-th diagonal entry. Thus
$$X^*(\hat{T}) = \mathbb{Z}\mathbf{e}_1\oplus \cdots \oplus \mathbb{Z}\mathbf{e}_n.$$
Denote by $\{\mathbf{e}_j^\vee\}$ the dual basis of $\{\mathbf{e}_i\}$. The  simple coroots are given by 
$$\alpha_i^\vee=\mathbf{e}^\vee_i-\mathbf{e}^\vee_{i+1}\quad (i=1,\ldots,n-1).$$
We denote $q_i=q^{\mathbf{e}^\vee_i}$.
Note that $q_i/q_j=q^{\mathbf{e}_i^\vee-\mathbf{e}_j^\vee}\in \Eff(\calB)$ for $i<j$. 
 Following the notation from classical Schubert calculus (e.g. \cite{AFbook}), we denote
$$x_i=D_{-\mathbf{e}_i},\qquad t_i=-\mathbf{e}_i.$$
We denote also
$$\chi_i=\Delta_{-\mathbf{e}_i}
=x_i+\hbar\sum_{a<i}
\frac{q_a/q_i}{1-q_a/q_i}
-\hbar\sum_{i<b}
\frac{q_i/q_b}{1-q_i/q_b}.
$$
By direct computation with respect to the ordered set $W^{-\mathbf{e}_1}=\{\id, s_1, \ldots, s_{n-1}\cdots s_1\}$, we have
\begin{equation}\label{eq:MatA}
M(\chi):=\Theta(-\mathbf{e}_1)=\left[
\begin{matrix}
\chi_1 &
\frac{\hbar}{1-q_1/q_2}&
\frac{\hbar}{1-q_1/q_3}&
\cdots &
\frac{\hbar}{1-q_1/q_n}\\[1ex]
\frac{\hbar}{1-q_2/q_1} &
\chi_2 &
\frac{\hbar}{1-q_2/q_3} &
\cdots &
\frac{\hbar}{1-q_2/q_n}\\[1ex]
\frac{\hbar}{1-q_3/q_1} &
\frac{\hbar}{1-q_3/q_2} &
\chi_3 &
\cdots &
\frac{\hbar}{1-q_3/q_n}\\[1ex]
\vdots&\vdots&\vdots&\ddots&\vdots\\[1ex]
\frac{\hbar}{1-q_n/q_1} &
\frac{\hbar}{1-q_n/q_2} &
\frac{\hbar}{1-q_n/q_3} &
\cdots &
\chi_n
\end{matrix}\right].
\end{equation}

From the proof of Proposition \ref{Mainprop}, we have the following identity over $\mathcal{A}$
$$\Sigma(-\mathbf{e}_1)^{-1}*M(\chi)*\Sigma(-\mathbf{e}_1)=\Xi(-\mathbf{e}_1)=\operatorname{diag}(t_1,\ldots,t_n).$$
Let $y$ be an indeterminant. We thus have
$$\det\bigg(y 1_n+M(\chi)\bigg)=\prod_{i=1}^n (y+t_i).$$
Expand the characteristic polynomial 
$$\det\bigg(y 1_n+M(\chi)\bigg)=
y^n+\mathcal{E}_1(\chi)y^{n-1}+\cdots+\mathcal{E}_{n-1}(\chi)y+\mathcal{E}_n(\chi).$$
Thus the following relation holds in $QH_{\hat{\bbT}}^*(T^*\calB)$,
$$\mathcal{E}_k(\chi)=e_k(t)$$
where $e_k$ is the $k$-th elementary symmetric polynomial.
Since the algebra of $W$-invariant polynomials is generated by the $k$-th elementary symmetric polynomials.
By Proposition \ref{DefPrin}, we can conclude the following presentation of quantum cohomology.

\begin{Th}\label{Th:typeApre}
For type $A_{n-1}$, we have
\begin{equation}
QH^*_{\hat{\mathbb{T}}}(T^*\calB) =
\frac{
\calO(T^\vee_{\reg})[\hbar,x_1,\ldots,x_n,t_1,\ldots,t_n]
}{\left<\mathcal{E}_k(\chi)-e_k(t),k=1,\ldots,n\right>}.
\end{equation}
Consequently, since $T$ is cut from $\hat{T}$ by the relation $\mathbf{e}_1+\cdots+\mathbf{e}_n=0$, we can conclude that 
$$QH^*_{{\mathbb{T}}}(T^*\calB)=QH^*_{\hat{\mathbb{T}}}(T^*\calB)/\left<e_1(t)\right>.$$
\end{Th}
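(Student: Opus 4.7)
The plan is to combine Proposition \ref{MB=BN} (applied to $\lambda = -\mathbf{e}_1$) with the Deformation Principle (Proposition \ref{DefPrin}). First I would note that $-\mathbf{e}_1$ is an anti-dominant minuscule weight with stabilizer $W_{-\mathbf{e}_1}$ generated by $s_2,\ldots,s_{n-1}$, so the minimal coset representatives $W^{-\mathbf{e}_1} = \{\id, s_1, s_2s_1, \ldots, s_{n-1}\cdots s_1\}$ are in bijection with the orbit $\{-\mathbf{e}_1,\ldots,-\mathbf{e}_n\}$ via $u\mapsto u(-\mathbf{e}_1)$. A direct unpacking of \eqref{eq:defofThetalambda} with respect to this ordering recovers $\Theta(-\mathbf{e}_1) = M(\chi)$ and $\Xi(-\mathbf{e}_1) = \operatorname{diag}(t_1,\ldots,t_n)$.

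Next, Proposition \ref{MB=BN} gives
$$M(\chi) * \Sigma(-\mathbf{e}_1) \;=\; \Sigma(-\mathbf{e}_1) * \operatorname{diag}(t_1,\ldots,t_n).$$
By Proposition \ref{detLemma}, $\Sigma(-\mathbf{e}_1)$ becomes invertible in $\mathcal{A}$, so $M(\chi)$ and $\operatorname{diag}(t_1,\ldots,t_n)$ are conjugate over $\mathcal{A}$. Taking characteristic polynomials,
$$\det\bigl(y\cdot 1_n + M(\chi)\bigr) \;=\; \prod_{i=1}^n(y+t_i) \qquad \text{in } \mathcal{A}[y].$$
Both sides have coefficients coming from $QH^*_{\hat{\bbT}}(T^*\calB)$, and the natural map $QH^*_{\hat{\bbT}}(T^*\calB)\hookrightarrow \mathcal{A}$ is injective because $QH^*_{\hat{\bbT}}(T^*\calB) \cong H^*_{\hat{\bbT}}(\calB)\otimes \calO(T^\vee_{\reg})$ is free over $H^*_{\hat{\bbT}}(\pt)$. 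Comparing coefficients of $y^{n-k}$ yields the identity $\mathcal{E}_k(\chi) = e_k(t)$ in $QH^*_{\hat{\bbT}}(T^*\calB)$, so each $\mathcal{E}_k(\chi)-e_k(t)$ lies in $\ker\pi$.

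To see that these elements generate $\ker\pi$, I would apply Proposition \ref{DefPrin} with $f_k = e_k \in \mathbb{C}[x_1,\ldots,x_n]^{S_n} = \Sym(\hat{\ft}^*)^W$, which is generated by elementary symmetric polynomials. For the congruence hypothesis, every off-diagonal entry of $M(\chi)$ carries an explicit factor of $\hbar$, while $\chi_i\equiv x_i\pmod{\hbar}$, so $M(\chi)\equiv \operatorname{diag}(x_1,\ldots,x_n)\pmod{\hbar}$, giving
$$\mathcal{E}_k(\chi)-e_k(t) \;\equiv\; e_k(D_{-\mathbf{e}_1},\ldots,D_{-\mathbf{e}_n}) - e_k(-\mathbf{e}_1,\ldots,-\mathbf{e}_n) \pmod{\hbar},$$
exactly of the form required by the Deformation Principle. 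The final assertion is then immediate: $T \subset \hat{T}$ is cut out by $\mathbf{e}_1+\cdots+\mathbf{e}_n = 0$, i.e.\ by $e_1(t)=0$.

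The main obstacle is the passage from the conjugation identity \emph{in the localization $\mathcal{A}$} to the polynomial identity \emph{in $QH^*_{\hat{\bbT}}(T^*\calB)$}; this is ultimately a freeness/injectivity check and is what makes it legitimate to read off relations in the unlocalized quantum cohomology from the spectral computation over $\mathcal{A}$. Once this is in hand, the rest is a routine characteristic-polynomial expansion together with the mod-$\hbar$ degeneration that activates Proposition \ref{DefPrin}.
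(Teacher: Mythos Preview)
Your proposal is correct and follows essentially the same route as the paper: identify $\Theta(-\mathbf{e}_1)=M(\chi)$, use Proposition~\ref{MB=BN} and Proposition~\ref{detLemma} to conjugate $M(\chi)$ to $\operatorname{diag}(t_1,\ldots,t_n)$ over $\mathcal{A}$, read off $\mathcal{E}_k(\chi)=e_k(t)$ via the characteristic polynomial (with the descent from $\mathcal{A}$ to $QH^*_{\hat{\bbT}}(T^*\calB)$ handled exactly as in the proof of Proposition~\ref{Mainprop}), and then invoke the Deformation Principle with the mod-$\hbar$ check. Your explicit verification of the congruence $M(\chi)\equiv\operatorname{diag}(x_1,\ldots,x_n)\pmod{\hbar}$ spells out a step the paper leaves implicit, but otherwise the arguments coincide.
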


Next, we will give a combinatorial formula for $\mathcal{E}_k(\chi)$. 
Let $K$ be a subset of $\{1,\ldots,n\}$ of order $k$. A \emph{matching} of $K$ is a map $\pi: K\to K$ with $\pi^2=\operatorname{id}$, i.e. an involution over $K$. A matching $\pi$ is called perfect, if the involution $\pi$ has no fixed points.
Let us denote
$$
J(\pi)=\prod_{\pi(i)=i}\chi_i\quad\text{and}\quad
V(\pi) = \prod_{\pi(i)=j>i}\frac{\hbar^2q_{i}q_{j}}{(q_{i}-q_{j})^2}.
$$
Note that $\frac{q_{i}q_{j}}{(q_{i}-q_{j})^2}
=\frac{q_{i}/q_j}{(1-q_{i}/q_{j})^2}\in \mathcal{O}(T^\vee_{\reg})$.

\begin{Th}\label{combforEk}
For any $1\leq k\leq n$, 
\begin{equation}
\mathcal{E}_k(\chi)=\sum_{K}\sum_{\pi} J(\pi)V(\pi),
\end{equation}
where the sum over all $k$-subsets $K$ of $\{1,\ldots,n\}$, and matchings $\pi$ of $K$.
\end{Th}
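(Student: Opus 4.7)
The plan is to expand the characteristic polynomial of $M(\chi)$ directly and match its expansion with the matching-sum formula. Writing $M(\chi)=D(\chi)+N$ with $D(\chi)=\mathrm{diag}(\chi_1,\ldots,\chi_n)$ and $N$ the purely off-diagonal part ($N_{ij}=\hbar/(1-q_i/q_j)$ for $i\neq j$, $N_{ii}=0$), a routine bookkeeping in the Leibniz expansion (at each position one picks either the diagonal or $N$) yields
\[
\det(y\cdot 1_n+D(\chi)+N)=\sum_{S\subseteq\{1,\ldots,n\}}\det(N[S,S])\prod_{i\notin S}(y+\chi_i).
\]
Extracting the coefficient of $y^{n-k}$ and regrouping by the $k$-set $K$ collecting indices that contribute either an $N$-entry (in $S$) or a $\chi$-factor (in $K\setminus S$), one obtains
\[
\mathcal{E}_k(\chi)=\sum_{|K|=k}\sum_{S\subseteq K}\det(N[S,S])\prod_{i\in K\setminus S}\chi_i.
\]
Comparing with the claimed combinatorial formula, the theorem thus reduces to the identity
\[
\det(N[S,S])=\sum_{\pi\text{ perfect matching of }S}\;\prod_{(i,j)\in\pi,\,i<j}\frac{\hbar^2 q_iq_j}{(q_i-q_j)^2},
\]
with the right-hand side interpreted as zero when $|S|$ is odd; granted this, the double sum above becomes $\sum_K\sum_\pi J(\pi)V(\pi)$, the matching $\pi$ of $K$ having fixed points $K\setminus S$ (contributing $J(\pi)$) and being a perfect matching on $S$ (contributing $V(\pi)$).

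To establish this identity I expand $\det(N[S,S])$ by Leibniz. Since $N_{ii}=0$, only derangements of $S$ contribute; grouping them by their cycle partitions $\mathcal{P}$ into blocks of size $\geq 2$ factors the determinant as $\det(N[S,S])=\sum_{\mathcal{P}}\prod_{B\in\mathcal{P}}Q(B)$, where $Q(B)=\sum_{c}\mathrm{sgn}(c)\prod_{i\in B}N_{i,c(i)}$ sums over $|B|$-cycles $c$ on $B$. A direct computation gives $Q(\{i,j\})=-N_{ij}N_{ji}=\hbar^2 q_iq_j/(q_i-q_j)^2$, exactly the matching weight of the pair $(i,j)$. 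Hence the theorem reduces to showing $Q(B)=0$ for every $|B|\geq 3$; after pulling out the common factor $(-1)^{|B|-1}\hbar^{|B|}\prod_{i\in B}q_i$, this becomes the cyclic vanishing identity
\[
\sum_{(i_1,\ldots,i_r)\,\text{cyclic ordering of }B}\;\prod_{j=1}^{r}\frac{1}{q_{i_{j+1}}-q_{i_j}}=0,\qquad r=|B|\geq 3.
\]

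The main obstacle, and the only substantive step, is this cyclic vanishing. I plan to prove it by a residue/degree argument. The left-hand side is a rational function in $q_1,\ldots,q_r$, homogeneous of degree $-r$, with only simple poles along the hyperplanes $q_i=q_j$. For each such hyperplane the residue vanishes by an involutive pairing: every cyclic ordering in which $i,j$ appear as consecutive entries $\ldots,i,j,\ldots$ is paired with the ordering obtained by swapping those two entries to $\ldots,j,i,\ldots$; at $q_i=q_j$ the remaining denominator factors in the two orderings agree, while the residues pick up opposite signs from $q_j-q_i$ versus $q_i-q_j$, so they cancel. With no poles the sum is a polynomial homogeneous of negative degree, hence identically zero. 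This identity is the combinatorial content behind the Feynman-style computation referenced in \cite{Polychronakos2019}.
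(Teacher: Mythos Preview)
Your argument is correct and follows the same overall architecture as the paper's Appendix~\ref{sec:Acomb}: both reduce $\mathcal{E}_k(\chi)$ to the identity
\[
\det(N[S,S])=\sum_{\pi\ \text{perfect matching of }S}\ \prod_{\substack{(i,j)\in\pi\\ i<j}}\frac{\hbar^2 q_iq_j}{(q_i-q_j)^2},
\]
and both prove this by grouping the Leibniz expansion according to cycle type, computing the $2$-cycle contribution directly and showing that cycles of length $\geq 3$ contribute zero. The paper packages the first reduction as ``do $k=n$ first (Lemma~\ref{Lemmawhenk=n}), then sum over principal minors'', while you expand $\det(y1_n+D(\chi)+N)$ in one step; these are the same computation.

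The genuine difference is in the proof of the cyclic vanishing identity
\[
\sum_{(i_1,\ldots,i_r)}\ \prod_{j=1}^{r}\frac{1}{q_{i_{j+1}}-q_{i_j}}=0,\qquad r\geq 3.
\]
The paper (Lemma~\ref{lemmaof2}) proves this by Atiyah--Bott localization on $\calB$: it exhibits a class $\gamma\in H^*_T(\calB)$ of degree strictly less than $\dim\calB$ whose localization formula is exactly the cyclic sum, forcing it to vanish. Your residue-and-degree argument is more elementary and self-contained: each term has only simple poles along the diagonals, the swap involution $(\ldots,i,j,\ldots)\leftrightarrow(\ldots,j,i,\ldots)$ pairs the contributing terms with opposite residues at $q_i=q_j$, and a rational function with no poles that is homogeneous of negative degree must vanish. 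This avoids equivariant cohomology entirely, at the cost of being a bit ad hoc; the paper's localization proof, on the other hand, explains \emph{why} the identity holds geometrically and fits the ambient theme of the paper. Either proof is fine here.
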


The proof of the above theorem involves some interesting equalities from the viewpoint of combinatorics. We leave the details in Appendix \ref{sec:Acomb}.

\begin{Eg}For $n=3$, 
\begin{align*}
\mathcal{E}_1(\chi) & = \chi_1+\chi_2+\chi_3\\
\mathcal{E}_2(\chi) & = \chi_1\chi_2+\chi_1\chi_3+\chi_2\chi_3
+\frac{\hbar^2q_1q_2}{(q_1-q_2)^2}
+\frac{\hbar^2q_1q_3}{(q_1-q_3)^2}
+\frac{\hbar^2q_2q_3}{(q_2-q_3)^2}\\
\mathcal{E}_3(\chi) & = \chi_1\chi_2\chi_3+
+\chi_3\frac{\hbar^2q_1q_2}{(q_1-q_2)^2}
+\chi_2\frac{\hbar^2q_1q_3}{(q_1-q_3)^2}
+\chi_1\frac{\hbar^2q_2q_3}{(q_2-q_3)^2}.
\end{align*}
We can illustrate each term above by drawing the matching as follows
$$\setlength{\unitlength}{1pc}
\begin{array}{c}
\begin{picture}(3,1)
\put(0.3,0.5){${}^1$}
\put(0.5,0.5){\circle*{0.3}}
\put(1.3,0.5){${}^2$}
\put(1.5,0.5){\circle*{0.3}}
\put(2.3,0.5){${}^3$}
\put(2.5,0.5){\circle*{0.3}}
\qbezier(0.5,0.5)(0,-0.2)(0.5,-0.2)
\qbezier(0.5,0.5)(1,-0.2)(0.5,-0.2)
\end{picture}\\
\chi_1
\end{array}\quad
\begin{array}{c}
\begin{picture}(3,1)
\put(0.3,0.5){${}^1$}
\put(0.5,0.5){\circle*{0.3}}
\put(1.3,0.5){${}^2$}
\put(1.5,0.5){\circle*{0.3}}
\put(2.3,0.5){${}^3$}
\put(2.5,0.5){\circle*{0.3}}
\qbezier(1.5,0.5)(1,-0.2)(1.5,-0.2)
\qbezier(1.5,0.5)(2,-0.2)(1.5,-0.2)
\end{picture}\\
\chi_2
\end{array}\quad
\begin{array}{c}
\begin{picture}(3,1)
\put(0.3,0.5){${}^1$}
\put(0.5,0.5){\circle*{0.3}}
\put(1.3,0.5){${}^2$}
\put(1.5,0.5){\circle*{0.3}}
\put(2.3,0.5){${}^3$}
\put(2.5,0.5){\circle*{0.3}}
\qbezier(2.5,0.5)(2,-0.2)(2.5,-0.2)
\qbezier(2.5,0.5)(3,-0.2)(2.5,-0.2)
\end{picture}\\
\chi_3
\end{array}$$
$$\setlength{\unitlength}{1pc}
\begin{array}{c}
\begin{picture}(3,1)
\put(0.3,0.5){${}^1$}
\put(0.5,0.5){\circle*{0.3}}
\put(1.3,0.5){${}^2$}
\put(1.5,0.5){\circle*{0.3}}
\put(2.3,0.5){${}^3$}
\put(2.5,0.5){\circle*{0.3}}
\qbezier(0.5,0.5)(0,-0.2)(0.5,-0.2)
\qbezier(0.5,0.5)(1,-0.2)(0.5,-0.2)
\qbezier(1.5,0.5)(1,-0.2)(1.5,-0.2)
\qbezier(1.5,0.5)(2,-0.2)(1.5,-0.2)
\end{picture}\\
\chi_1\chi_2
\end{array}\quad
\begin{array}{c}
\begin{picture}(3,1)
\put(0.3,0.5){${}^1$}
\put(0.5,0.5){\circle*{0.3}}
\put(1.3,0.5){${}^2$}
\put(1.5,0.5){\circle*{0.3}}
\put(2.3,0.5){${}^3$}
\put(2.5,0.5){\circle*{0.3}}
\qbezier(0.5,0.5)(0,-0.2)(0.5,-0.2)
\qbezier(0.5,0.5)(1,-0.2)(0.5,-0.2)
\qbezier(2.5,0.5)(2,-0.2)(2.5,-0.2)
\qbezier(2.5,0.5)(3,-0.2)(2.5,-0.2)
\end{picture}\\
\chi_1\chi_3
\end{array}\quad
\begin{array}{c}
\begin{picture}(3,1)
\put(0.3,0.5){${}^1$}
\put(0.5,0.5){\circle*{0.3}}
\put(1.3,0.5){${}^2$}
\put(1.5,0.5){\circle*{0.3}}
\put(2.3,0.5){${}^3$}
\put(2.5,0.5){\circle*{0.3}}
\qbezier(1.5,0.5)(1,-0.2)(1.5,-0.2)
\qbezier(1.5,0.5)(2,-0.2)(1.5,-0.2)
\qbezier(2.5,0.5)(2,-0.2)(2.5,-0.2)
\qbezier(2.5,0.5)(3,-0.2)(2.5,-0.2)
\end{picture}\\
\chi_2\chi_3
\end{array}\quad
\begin{array}{c}
\begin{picture}(3,1)
\put(0.3,0.5){${}^1$}
\put(0.5,0.5){\circle*{0.3}}
\put(1.3,0.5){${}^2$}
\put(1.5,0.5){\circle*{0.3}}
\put(2.3,0.5){${}^3$}
\put(2.5,0.5){\circle*{0.3}}
\qbezier(0.5,0.5)(1,-0.2)(1.5,0.5)
\end{picture}\\
\frac{\hbar^2q_1q_2}{(q_1-q_2)^2}
\end{array}\quad
\begin{array}{c}
\begin{picture}(3,1)
\put(0.3,0.5){${}^1$}
\put(0.5,0.5){\circle*{0.3}}
\put(1.3,0.5){${}^2$}
\put(1.5,0.5){\circle*{0.3}}
\put(2.3,0.5){${}^3$}
\put(2.5,0.5){\circle*{0.3}}
\qbezier(0.5,0.5)(1.5,-0.5)(2.5,0.5)
\end{picture}\\
\frac{\hbar^2q_1q_3}{(q_1-q_3)^2}
\end{array}\quad
\begin{array}{c}
\begin{picture}(3,1)
\put(0.3,0.5){${}^1$}
\put(0.5,0.5){\circle*{0.3}}
\put(1.3,0.5){${}^2$}
\put(1.5,0.5){\circle*{0.3}}
\put(2.3,0.5){${}^3$}
\put(2.5,0.5){\circle*{0.3}}
\qbezier(1.5,0.5)(2,-0.2)(2.5,0.5)
\end{picture}\\
\frac{\hbar^2q_2q_3}{(q_2-q_3)^2}
\end{array}$$
$$\setlength{\unitlength}{1pc}
\begin{array}{c}
\begin{picture}(3,1)
\put(0.3,0.5){${}^1$}
\put(0.5,0.5){\circle*{0.3}}
\put(1.3,0.5){${}^2$}
\put(1.5,0.5){\circle*{0.3}}
\put(2.3,0.5){${}^3$}
\put(2.5,0.5){\circle*{0.3}}
\qbezier(0.5,0.5)(0,-0.2)(0.5,-0.2)
\qbezier(0.5,0.5)(1,-0.2)(0.5,-0.2)
\qbezier(1.5,0.5)(1,-0.2)(1.5,-0.2)
\qbezier(1.5,0.5)(2,-0.2)(1.5,-0.2)
\qbezier(2.5,0.5)(2,-0.2)(2.5,-0.2)
\qbezier(2.5,0.5)(3,-0.2)(2.5,-0.2)
\end{picture}\\[1ex]
\chi_1\chi_2\chi_3
\end{array}\quad
\begin{array}{c}
\begin{picture}(3,1)
\put(0.3,0.5){${}^1$}
\put(0.5,0.5){\circle*{0.3}}
\put(1.3,0.5){${}^2$}
\put(1.5,0.5){\circle*{0.3}}
\put(2.3,0.5){${}^3$}
\put(2.5,0.5){\circle*{0.3}}
\qbezier(0.5,0.5)(1,-0.2)(1.5,0.5)
\qbezier(2.5,0.5)(2,-0.2)(2.5,-0.2)
\qbezier(2.5,0.5)(3,-0.2)(2.5,-0.2)
\end{picture}\\[1ex]
\chi_3\frac{\hbar^2q_1q_2}{(q_1-q_2)^2}
\end{array}\quad
\begin{array}{c}
\begin{picture}(3,1)
\put(0.3,0.5){${}^1$}
\put(0.5,0.5){\circle*{0.3}}
\put(1.3,0.5){${}^2$}
\put(1.5,0.5){\circle*{0.3}}
\put(2.3,0.5){${}^3$}
\put(2.5,0.5){\circle*{0.3}}
\qbezier(0.5,0.5)(1.5,-1.2)(2.5,0.5)
\qbezier(1.5,0.5)(1.1,-0.2)(1.5,-0.2)
\qbezier(1.5,0.5)(1.9,-0.2)(1.5,-0.2)
\end{picture}\\[1ex]
\chi_2\frac{\hbar^2q_1q_3}{(q_1-q_3)^2}
\end{array}\quad
\begin{array}{c}
\begin{picture}(3,1)
\put(0.3,0.5){${}^1$}
\put(0.5,0.5){\circle*{0.3}}
\put(1.3,0.5){${}^2$}
\put(1.5,0.5){\circle*{0.3}}
\put(2.3,0.5){${}^3$}
\put(2.5,0.5){\circle*{0.3}}
\qbezier(1.5,0.5)(2,-0.2)(2.5,0.5)
\qbezier(0.5,0.5)(0,-0.2)(0.5,-0.2)
\qbezier(0.5,0.5)(1,-0.2)(0.5,-0.2)
\end{picture}\\[1ex]
\chi_1\frac{\hbar^2q_2q_3}{(q_2-q_3)^2}
\end{array}$$
\end{Eg}

\subsection{Type $B_n$ and type $C_n$}
The weight lattice is naturally recognized as
$$\mathbb{Z}\mathbf{e}_1\oplus \cdots \oplus \mathbb{Z}\mathbf{e}_n$$
with the set of simple coroots
$$\alpha^\vee_i=\mathbf{e}^\vee_i-\mathbf{e}^\vee_{i+1}\quad (i=1,\ldots,n-1)\qquad
\alpha_n^\vee=p\mathbf{e}^\vee_n$$
where $p=2$ for type $B_n$ and $p=1$ for type $C_n$.
We denote
$$q_i=q^{\mathbf{e}^\vee_i},\qquad x_i=D_{-\mathbf{e}_i},\qquad\chi_i=\Delta_{-\mathbf{e}_i},\qquad t_i=-\mathbf{e}_i.$$
With respect to the ordered set 
 $$W^{-\mathbf{e}_1}=\big\{\id, s_1, {\ldots},  s_n{\cdot}{\cdot}{\cdot} s_1, s_{n\texttt{-}1}s_n{\cdot}{\cdot}{\cdot} s_1,{\ldots},  s_1{\cdot}{\cdot}{\cdot} s_{n\texttt{-}1} s_n{\cdot}{\cdot}{\cdot} s_1\big\},$$
we have
\begin{equation}\label{eq:MatBC}
M(\chi):=\Theta(-\mathbf{e}_1)=\left[
\begin{array}{@{}c@{\,}c@{\,}c|c@{\,}c@{\,}c@{}}
\chi_1 & \cdots&\frac{\hbar}{1-q_1/q_n}&
  \frac{\hbar}{1-q_1q_n} & \cdots & \frac{p\hbar}{1-q_1^p}\\[1ex]
\vdots & \ddots&\vdots&
  \vdots&\reflectbox{$\ddots$}&\vdots\\[1ex]
\frac{\hbar}{1-q_n/q_1} & \cdots & \chi_n&
  \frac{p\hbar}{1-q_n^p} & \cdots & \frac{\hbar}{1-q_1q_n}\\
    [1ex]\hline\rule{0pc}{3ex}
\frac{\hbar}{1-q_n^{-1}q_1^{-1}} & \cdots &\frac{p\hbar}{1-q_n^{-p}} &
  -\chi_n&\cdots & \frac{\hbar}{1-q_1/q_n}\\[1ex]
\vdots&\reflectbox{$\ddots$}&\vdots&
  \vdots & \ddots&\vdots\\[1ex]
\frac{p\hbar}{1-q_1^{-p}} & \cdots & \frac{\hbar}{1-q_1^{-1}q_n^{-1}}&
  \frac{\hbar}{1-q_n/q_1} & \cdots & -\chi_1
\end{array}\right].
\end{equation}
Here in the upper right (resp. lower left) block, the $(i,j)$-entry is given by $\frac{\hbar}{1-q_iq_{n+1-j}}$ (resp. $\frac{\hbar}{1-q_{n+1-i}^{-1}q_{j}^{-1}}$) when $i+j\neq n+1$ and ${p\hbar\over 1-q_i^p}$ (resp. ${p\hbar\over 1-q_{j}^{-p}}$) otherwise. 
 Similar to the case of type $A$, we have
$$\det\bigg(y 1_{2n}+M(\chi)\bigg)=\prod_{i=1}^{n} (y^2-t_i^2).$$
Expand the characteristic polynomial 
$$\det\bigg(y 1_{2n}+M(\chi)\bigg)=
y^{2n}+\mathcal{E}_1(\chi)y^{2n-1}+\cdots+\mathcal{E}_{2n-1}(\chi)y+\mathcal{E}_{2n}(\chi).$$
Thus for each $k\in \{1,\ldots, n\}$, $\mathcal{E}_{2k-1}(\chi)=0$, and the following relation holds in $QH_{\bbT}^*(T^*\calB)$,
$$\mathcal{E}_{2k}(\chi)=(-1)^ke_k(t^2).$$
Since the algebra of $W$-invariant polynomial is generated by $k$-th elementary symmetric polynomials in squares.
By Deformation Principle in Proposition \ref{DefPrin}, we can conclude the following presentation of quantum cohomology.

\begin{Th}\label{prefortypeBC}
For $G$ of type $B_n$ or $C_n$, we have
\begin{equation}
QH^*_{\mathbb{T}}(T^*\calB) =
\frac{\calO(T^\vee_{\reg})
[\hbar,x_1,\ldots,x_n,t_1,\ldots,t_n]
}{\left<\mathcal{E}_{2k}(\chi)-(-1)^{k}e_{k}(t^2), k=1,\ldots,n\right>}.
\end{equation}
\end{Th}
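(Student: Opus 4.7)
The plan is to mimic the type $A$ argument with $\lambda = -\mathbf{e}_1$ and to assemble the matrix $M(\chi) = \Theta(-\mathbf{e}_1)$ explicitly. First I would determine the stabilizer $W_{-\mathbf{e}_1}$: in both type $B_n$ and type $C_n$, this is the Weyl subgroup generated by $s_2,\ldots,s_n$, namely the Weyl group of the sub-root-system on coordinates $2,\ldots,n$. Consequently, $W^{-\mathbf{e}_1}$ has $2n$ elements and can be listed as in the statement, so that the orbit $W(-\mathbf{e}_1) = \{\pm\mathbf{e}_1,\ldots,\pm\mathbf{e}_n\}$ enumerates the diagonal of $\Xi(-\mathbf{e}_1) = \operatorname{diag}(t_1,\ldots,t_n,-t_n,\ldots,-t_1)$.

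Next I would verify the entries of $M(\chi)$ directly from \eqref{eq:defofThetalambda}. The diagonal entries are $\Delta_{\pm\mathbf{e}_i} = \pm\chi_i$ (the second block has a sign flip because $\Delta_{\mathbf{e}_i} = -\chi_i$). The off-diagonal entries come from positive roots $\alpha\in R^+\setminus R^+_{-\mathbf{e}_1}$: for each pair $(u,v)$ of cosets in $W^{-\mathbf{e}_1}$ we have at most one $\alpha$ with $\overline{us_\alpha}=v$. The short-root-type entries $\hbar/(1-q_i/q_j)$ and $\hbar/(1-q_iq_j)$ arise from roots $\mathbf{e}_i-\mathbf{e}_j$ and $\mathbf{e}_i+\mathbf{e}_j$ respectively, while the anti-diagonal entries involve the distinguished roots at position $n$: in type $C_n$ the root $2\mathbf{e}_n$ gives $\hbar/(1-q_i)$ (with $p=1$ in $q_i^p$), and in type $B_n$ the short root $\mathbf{e}_n$ gives the factor $\langle-\mathbf{e}_1,\alpha^\vee\rangle = 2$ in the numerator (hence $p=2$). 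This uniform treatment justifies formula \eqref{eq:MatBC}.

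Once $M(\chi)$ is identified, the key mechanism of Section \ref{sec:rel} applies verbatim. By Proposition \ref{MB=BN}, $M(\chi)*\Sigma(-\mathbf{e}_1) = \Sigma(-\mathbf{e}_1)*\Xi(-\mathbf{e}_1)$ in the matrix ring over $QH_{\bbT}^*(T^*\calB)$, and by Proposition \ref{detLemma}, $\Sigma(-\mathbf{e}_1)$ is invertible over the localization $\mathcal{A}$ of \eqref{eq:defofcalA}. Thus over $\mathcal{A}$, the characteristic polynomial of $M(\chi)$ agrees with that of $\Xi(-\mathbf{e}_1)$:
\[
\det\!\bigl(y\cdot 1_{2n}+M(\chi)\bigr) \;=\; \prod_{i=1}^{n}(y+t_i)(y-t_i) \;=\; \prod_{i=1}^{n}(y^2-t_i^2).
\]
Since both sides lie in $QH_{\bbT}^*(T^*\calB)[y]$, the identity holds already there. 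Matching coefficients of $y$ yields $\mathcal{E}_{2k-1}(\chi)=0$ and $\mathcal{E}_{2k}(\chi)=(-1)^k e_k(t^2)$ in $QH_{\bbT}^*(T^*\calB)$.

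Finally, I would invoke the Deformation Principle (Proposition \ref{DefPrin}). Modulo $\hbar$, the matrix $M(\chi)$ becomes $\operatorname{diag}(x_1,\ldots,x_n,-x_n,\ldots,-x_1)$, so $\mathcal{E}_{2k}(\chi)\equiv (-1)^k e_k(x^2)\pmod{\hbar}$, and the relations $\mathcal{E}_{2k}(\chi)-(-1)^k e_k(t^2)$ reduce to $(-1)^k(e_k(x^2)-e_k(t^2))$. Since $\{e_k(t^2)\}_{k=1}^n$ generates $\Sym(\ft^*)^W$ in types $B_n$ and $C_n$, Proposition \ref{DefPrin} ensures these relations generate $\ker\pi$, completing the presentation. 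The main obstacle is the explicit combinatorial bookkeeping in assembling $M(\chi)$, especially the anti-diagonal block where the type $B$ versus type $C$ distinction enters through the coroot pairing $\langle -\mathbf{e}_1, \alpha_n^\vee\rangle$ and through the location of the root hyperplanes $q_i^p=1$; once this bookkeeping is settled, the remainder of the argument is parallel to type $A$.
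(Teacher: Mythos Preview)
Your proposal is correct and follows essentially the same route as the paper: compute $M(\chi)=\Theta(-\mathbf{e}_1)$ explicitly, use Propositions \ref{MB=BN} and \ref{detLemma} to conjugate $M(\chi)$ to $\Xi(-\mathbf{e}_1)$ over $\mathcal{A}$ and hence match characteristic polynomials, and then invoke the Deformation Principle with the generators $e_k(t^2)$ of $\Sym(\ft^*)^W$. The only caveat is that your description of the anti-diagonal entries is slightly imprecise (the relevant positive root for the $(u,v)$-entry with $u(-\mathbf{e}_1)=-\mathbf{e}_i$ and $v(-\mathbf{e}_1)=\mathbf{e}_i$ is $2\mathbf{e}_1$ in type $C$ and $\mathbf{e}_1$ in type $B$, with $u\alpha^\vee$ producing the $q_i^p$), but you already flag this bookkeeping as the point requiring care, and it does not affect the argument.
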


\subsection{Type $D_n$}
The weight lattice is naturally recognized as
$$\bigg\{\lambda_1\mathbf{e}_1+\cdots+
\lambda_n\mathbf{e}_n\,\bigg|\,\lambda_i\in \frac{1}{2}\mathbb{Z}, \lambda_i-\lambda_j\in \mathbb{Z}. \bigg\}$$
with the set of simple coroots
$$\alpha^\vee_i=\mathbf{e}^\vee_i-\mathbf{e}^\vee_{i+1}\quad (i=1,\ldots,n-1)\qquad
\alpha^\vee_n=\mathbf{e}^\vee_{n-1}+\mathbf{e}^\vee_{n}.$$
We denote 
$$q_i=q^{\mathbf{e}^\vee_i},\qquad x_i=D_{-\mathbf{e}_i},\qquad\chi_i=\Delta_{-\mathbf{e}_i},\qquad t_i=-\mathbf{e}_i.$$
With respect to the ordered set 
 $$W^{-\mathbf{e}_1}=
 \big\{
 \id, s_1, {\ldots}, s_{n\texttt{-}1}s_{n\texttt{-}2}{\cdot}{\cdot}{\cdot} s_1,
 s_ns_{n\texttt{-}2}{\cdot}{\cdot}{\cdot} s_1,
 s_n{\cdot}{\cdot}{\cdot} s_1,
 s_{n\texttt{-}2}s_n{\cdot}{\cdot}{\cdot} s_1,
 {\ldots},
 s_1{\cdot}{\cdot}{\cdot} s_{n\texttt{-}2} s_n{\cdot}{\cdot}{\cdot} s_1
 \big\},$$
we have
\begin{equation}\label{eq:MatD}
M(\chi):=\Theta(-\mathbf{e}_1)=\left[
\begin{array}{@{}c@{\,}c@{\,}c|c@{\,}c@{\,}c@{}}
\chi_1 & \cdots&\frac{\hbar}{1-q_1/q_n}&
  \frac{\hbar}{1-q_1q_n} & \cdots & 0\\[1ex]
\vdots & \ddots&\vdots&
  \vdots&\reflectbox{$\ddots$}&\vdots\\[1ex]
\frac{\hbar}{1-q_n/q_1} & \cdots & \chi_n&
  0 & \cdots & \frac{\hbar}{1-q_nq_1}\\
    [1ex]\hline\rule{0pc}{3ex}
\frac{\hbar}{1-q_n^{-1}q_1^{-1}} & \cdots &0 &
  -\chi_n&\cdots & \frac{\hbar}{1-q_1/q_n}\\[1ex]
\vdots&\reflectbox{$\ddots$}&\vdots&
  \vdots & \ddots&\vdots\\[1ex]
0 & \cdots & \frac{\hbar}{1-q_1^{-1}q_n^{-1}}&
  \frac{\hbar}{1-q_n/q_1} & \cdots & -\chi_1
\end{array}\right].
\end{equation}
Here in the upper right (resp. lower left) block, the $(i,j)$-entry is given by $\frac{\hbar}{1-q_iq_{n+1-j}}$ (resp. $\frac{\hbar}{1-q_{n+1-i}^{-1}q_{j}^{-1}}$) when $i+j\neq n+1$ and $0$ otherwise. 
Similar to the case of type $A$, we have
$$\det\bigg(y 1_{2n}+M(\chi)\bigg)=\prod_{i=1}^n (y^2-t_i^2).$$
Expand the characteristic polynomial $$\det\bigg(y 1_{2n}+M(\chi)\bigg)=
y^{2n}+\mathcal{E}_1(\chi)y^{2n-1}+\cdots+\mathcal{E}_{2n-1}(\chi)y+\mathcal{E}_{2n}(\chi).$$
Thus for each $k\in \{1,\ldots, n\}$, $\mathcal{E}_{2k-1}(\chi)=0$ and the following relation holds in $QH_{\bbT}^*(T^*\calB)$,
$$\mathcal{E}_{2k}(\chi)=(-1)^ke_k(t^2).$$
For type $D_n$, the polynomial
$e_n(t)$ is also $W$-invariant.
In particular, 
$$(-1)^n\mathcal{E}_{2n}(\chi)=(-1)^n\det M(\chi)=e_n(t^2)=e_n(t)^2.$$
Consider
\begin{equation}
A(\chi)
=\left[\begin{array}{@{}c@{\,}c@{\,}c@{\,}c@{}}
\chi_1 &
\frac{\hbar(1-q_1)(1+q_2)}{(1-q_1/q_2)(1-q_1q_2)}&\cdots&
\frac{\hbar(1-q_1)(1+q_n)}{(1-q_1/q_n)(1-q_1q_n)}\\[1ex]
\frac{\hbar(1-q_2)(1+q_1)}{(1-q_2/q_1)(1-q_1q_2)}&\chi_2 & \cdots &
\frac{\hbar(1-q_2)(1+q_n)}{(1-q_2/q_n)(1-q_2q_n)}\\[1ex]
\vdots&\vdots&\ddots&\vdots\\
\frac{\hbar(1-q_n)(1+q_1)}{(1-q_n/q_1)(1-q_1q_n)}&
\frac{\hbar(1-q_n)(1+q_2)}{(1-q_n/q_2)(1-q_2q_n)}&\cdots&\chi_n
\end{array}\right].
\end{equation}

\begin{Lemma}\label{Lemma:M=detAdetA}
We have
$$
\det M(\chi)=\det A(\chi)\cdot
\det A(-\chi).$$  
\end{Lemma}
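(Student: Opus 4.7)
My plan is to prove the identity by a block Schur-complement argument after a cosmetic reindexing. First I conjugate $M(\chi)$ by the block permutation $\operatorname{diag}(I_n, J_n)$, where $J_n$ is the $n\times n$ anti-diagonal; this preserves the determinant and rewrites $M$ in the clean $2\times 2$ block form
\[
\tilde{M}(\chi) = \begin{bmatrix} P(\chi) & Q \\ \tilde{Q} & P'(-\chi) \end{bmatrix}.
\]
Here $P(\chi)$ has diagonal $\chi_i$ and off-diagonal $\hbar/(1-q_i/q_j)$; the matrices $Q$ and $\tilde{Q}$ are symmetric with zero diagonal and off-diagonal entries $\hbar/(1-q_iq_j)$ and $-\hbar q_iq_j/(1-q_iq_j)$ respectively (so that $\tilde{Q} = -D_q Q D_q$ for $D_q = \operatorname{diag}(q_1, \ldots, q_n)$); and $P'(-\chi)$ has diagonal $-\chi_i$ with the reflected off-diagonal $\hbar/(1-q_{n+1-i}/q_{n+1-j})$.

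The core algebraic ingredient is the partial-fraction identity
\[
\frac{(1-q_i)(1+q_j)}{(1-q_i/q_j)(1-q_iq_j)} = \frac{1}{1-q_i/q_j} + \frac{q_j}{1-q_iq_j}\qquad (i\neq j),
\]
which rewrites the matrix $A(\chi)$ as $A(\chi) = P(\chi) + Q D_q$. Consequently, the determinant-preserving column operations $C_j \mapsto C_j + q_j\, C_{n+j}$ for $j = 1,\ldots,n$ convert the upper-left block of $\tilde{M}(\chi)$ into $A(\chi)$, after which the Schur complement with respect to this block gives
\[
\det M(\chi) = \det A(\chi)\cdot \det\!\Bigl(P'(-\chi) - \bigl(\tilde{Q} + P'(-\chi)\, D_q\bigr) A(\chi)^{-1} Q\Bigr).
\]

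The main obstacle is identifying the Schur complement on the right with a matrix whose determinant is $\det A(-\chi)$. In the case $n=2$ one checks directly (using $P'(-\chi) = P(-\chi)^{\mathsf{T}}$ in that case) that the Schur complement equals $A(-\chi)^{\mathsf{T}}$, finishing the argument. For general $n$ I would exploit $\tilde{Q} = -D_q Q D_q$ together with the analogous partial-fraction identity for $A(-\chi)$ to show that the Schur complement differs from $A(-\chi)$ by an invertible diagonal conjugation; this reduces to an algebraic identity among rational functions in the $q_i$ and $\chi_i$. In parallel I would attempt a direct induction on $n$ by Laplace expansion along a row or column of $M(\chi)$, exploiting the zeros on the anti-diagonals of the off-diagonal blocks (characteristic of type $D$) to keep the recursion manageable.
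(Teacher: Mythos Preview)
Your setup is correct and the partial-fraction identity
\[
\frac{(1-q_i)(1+q_j)}{(1-q_i/q_j)(1-q_iq_j)} = \frac{1}{1-q_i/q_j} + \frac{q_j}{1-q_iq_j}
\]
is exactly the algebraic input needed. But the proof is not complete: you yourself flag the ``main obstacle'' of identifying the Schur complement with a matrix of determinant $\det A(-\chi)$, verify it only for $n=2$, and for general $n$ offer only a plan (``I would exploit\ldots'', ``In parallel I would attempt\ldots''). That plan is not obviously going to close: the Schur complement $P'(-\chi) - (\tilde{Q}+P'(-\chi)D_q)A(\chi)^{-1}Q$ involves $A(\chi)^{-1}$, and showing it is conjugate to $A(-\chi)$ by a diagonal matrix is a genuinely nontrivial rational-function identity in $n$ variables that you have not established. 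The induction-by-Laplace-expansion fallback is also unsubstantiated.

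The paper avoids the Schur complement entirely by doing what your column operations do, but \emph{symmetrically on both sides}. It writes down explicit $2n\times 2n$ matrices $L$ and $R$, each with $2\times 2$ block structure whose blocks are $\pm I_n$, $\pm D_q^{\pm 1}$ or their products with the anti-diagonal $J_n$, and checks directly that
\[
\tfrac{1}{2}\,L\,M(\chi)\,R \;=\; \begin{bmatrix} A(\chi) & 0 \\ 0 & A(-\chi)^{\mathsf{t}} \end{bmatrix},
\qquad \det\bigl(\tfrac{1}{2}RL\bigr)=1.
\]
The verification of the first equality is precisely your partial-fraction identity (and its companion obtained by $q_j\mapsto q_j^{-1}$) applied blockwise. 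Your column move $C_j\mapsto C_j+q_jC_{n+j}$ is essentially the right multiplication by $R$; what you are missing is the analogous row move (left multiplication by $L$) that kills the lower-left block at the same time, so that no inverse of $A(\chi)$ ever appears. If you add those row operations, the Schur complement collapses to $A(-\chi)^{\mathsf{t}}$ on the nose and the argument finishes in one line.
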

\begin{proof}
Denote
$$
L = \left[\begin{array}{@{}c@{\,}c@{\,}c@{\,\,}|@{\,}c@{\!\!}c@{\!\!}c@{}}
1  &&  &  &&-q_1^{-1}\\[-1ex]
&\ddots&&&\reflectbox{$\ddots$}\\
  &&1  &-q_n^{-1}& \\\hline
-q_1&&  &  &&-1\\[-1ex]
&\ddots&&&\reflectbox{$\ddots$}\\
  &&-q_n&-1 &&
\end{array}\right],\qquad
R=
\left[\begin{array}{c@{\,\,\,}c@{\,\,\,}c@{\,\,\,}|c@{}c@{}c@{}}
1  &&  &q_1^{-1} && \\[-1ex]
&\ddots&&&\ddots\\
  &&1  &  && q_n^{-1}\\\hline
  &&q_n &  && -1\\[-1ex]
&\reflectbox{$\ddots$}&&&\reflectbox{$\ddots$}\\[-1ex]
q_1 &&  &-1 &&
\end{array}\right].$$
By direct computation, we have
$$\frac{1}{2}LM(\chi)R
=\left[
\begin{matrix}
A(\chi)\\
&A(-\chi)^{\tt t}
\end{matrix}\right].$$
Note that
$$\frac{1}{2}RL
=\left[\begin{array}{@{}c@{\,\,\,}c@{\,\,\,}c@{\,\,\,}|@{\,}c@{\!\!\!}c@{\!\!\!}c@{}}
  &&  &  &&-q_1^{-1}\\[-1ex]
&&&&\reflectbox{$\ddots$}\\
  &&  &-q_n^{-1}& \\\hline
&& q_n  &  &&\\[-1ex]
&\reflectbox{$\ddots$}&&&\\
q_1 &&
\end{array}\right]$$
whose determinant is $1$.
As a result, we get the assertion.
\end{proof}

Consider the expansion of
$$\det A(\chi)=\sum_{\sigma\in \mathfrak{S}_n}
(-1)^{\ell(\sigma)}m_\sigma,\qquad m_\sigma=A(\chi)_{1\sigma(1)}\cdots A(\chi)_{n\sigma(n)}.$$

\begin{Lemma}
We have
$$\det A(\chi)=\sum_{\sigma}
(-1)^{\ell(\sigma)}m_\sigma$$
with $\sigma$ going through permutations whose nontrivial cycles all have even lengths.
\end{Lemma}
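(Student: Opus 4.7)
The plan is to establish the vanishing of every term $m_\sigma$ indexed by a permutation $\sigma$ having a nontrivial cycle of odd length, by exhibiting a sign-reversing involution on such permutations. The key input is the following cycle-reversal identity: for any cycle $c=(i_1,i_2,\ldots,i_k)$ of length $k\ge 2$, setting
$$
m_c = A(\chi)_{i_1 i_2}\,A(\chi)_{i_2 i_3}\cdots A(\chi)_{i_k i_1},
$$
the reversed cycle $c^{-1}=(i_1,i_k,i_{k-1},\ldots,i_2)$ satisfies $m_{c^{-1}} = (-1)^k m_c$.

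To verify this identity, I would expand each off-diagonal entry as $A(\chi)_{ab} = \frac{\hbar(1-q_a)(1+q_b)}{(1-q_a/q_b)(1-q_a q_b)}$ and track each factor around the cycle. The numerator of $m_c$ equals $\hbar^k\prod_j (1-q_{i_j})(1+q_{i_j})$, which depends only on the support of $c$ and is therefore insensitive to cycle direction. In the denominator, the product $\prod_j (1-q_{i_j}q_{i_{j+1}})$ depends only on the cyclic sequence of unordered adjacent pairs and so is also direction-independent. The single direction-sensitive piece is $\prod_j (1-q_{i_j}/q_{i_{j+1}})$; applying the elementary identity $1-q_b/q_a = -(q_b/q_a)(1-q_a/q_b)$ to each factor and using the telescoping $\prod_j (q_{i_{j+1}}/q_{i_j}) = 1$ around the cycle, this product picks up a global sign of $(-1)^k$ under reversal. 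Combining the three contributions yields $m_{c^{-1}} = (-1)^k m_c$.

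Given this, on the set of permutations containing at least one odd nontrivial cycle, I would define the involution $\sigma \mapsto \sigma^*$ by selecting the odd nontrivial cycle of $\sigma$ that contains the minimum of the set $\{\,i : i \text{ lies in some odd nontrivial cycle of } \sigma\,\}$ and replacing it by its reverse, while leaving the remaining cycles of $\sigma$ untouched. Any odd nontrivial cycle has length $k\ge 3$, so its reverse is a distinct cycle and $\sigma^*\ne \sigma$; moreover, the canonical cycle to be reversed depends only on the cycle partition of $\sigma$, which is preserved by the operation, so $(\sigma^*)^*=\sigma$. Since $\sigma$ and $\sigma^*$ share the same cycle type, $(-1)^{\ell(\sigma)}=(-1)^{\ell(\sigma^*)}$, while the key identity gives $m_{\sigma^*}=-m_\sigma$; the two contributions to $\det A(\chi)$ therefore cancel. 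Fixed points and $2$-cycles do not enter the construction---the former are trivial, and the latter satisfy $c=c^{-1}$ automatically---so the only surviving terms are precisely those indexed by permutations whose nontrivial cycles all have even length. The only delicate point is the sign bookkeeping in the cycle-reversal identity; the telescoping of $q$-ratios around a cycle is the essential structural feature that makes the whole argument go through.
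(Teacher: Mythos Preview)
Your proof is correct and follows essentially the same approach as the paper: both exhibit a sign-reversing involution by selecting the odd nontrivial cycle containing the smallest admissible index and replacing it by its inverse, and both reduce the cancellation to the cycle-reversal identity $m_{c^{-1}}=(-1)^k m_c$. Your derivation of this identity via the telescoping of the $q$-ratio factors is equivalent to the paper's direct rewriting $\prod_i \frac{\hbar(1-q_i)(1+q_{i+1})}{(1-q_i/q_{i+1})(1-q_iq_{i+1})}=\prod_i \frac{\hbar(1-q_i^2)q_i}{(q_{i+1}-q_i)(1-q_iq_{i+1})}$; you are also slightly more explicit than the paper in noting that $(-1)^{\ell(\sigma)}=(-1)^{\ell(\sigma^*)}$ since the cycle type is preserved.
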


\begin{proof}
If $\sigma$ is a permutation having a non-trivial odd cycle, we define $\sigma'$ by inverting the unique odd cycle of $\sigma$ which contains the smallest index.
Since the correspondence $\sigma\mapsto \sigma'$ is an involution, it suffices to show
$$m_{\sigma}+m_{\sigma'}=0.$$
We can assume without loss of generality that the odd cycle is $(1\cdots k)$ for odd $k$.
Note that
\begin{align*}
{A(\chi)_{12}A(\chi)_{23}\cdots A(\chi)_{k1}}
& = \prod_{i}
\frac{\hbar(1-q_i)(1+q_{i+1})}{(1-q_i/q_{i+1})(1-q_iq_{i+1})}\\
& = \prod_{i}
\frac{\hbar(1-q_i^2)q_i}{(q_{i+1}-q_i)(1-q_iq_{i+1})}\\
& = (-1)^k {A(\chi)_{1k}A(\chi)_{21}\cdots A(\chi)_{k,k-1}}.
\end{align*}
where the indices are understood as elements of $\mathbb{Z}/k\mathbb{Z}$.
So we can deduce
$m_{\sigma}+m_{\sigma'}=0$.
\end{proof}

In particular, we have
$$\det A(\chi)=\det \big({-}A(-\chi)\big)
=(-1)^n\det A(-\chi).$$
Combining Lemma \ref{Lemma:M=detAdetA}, we have
$$\big(\det A(\chi)\big)^2=(-1)^n\det\big(M(\chi)\big)=e_n(t)^2.$$
Note that $e_n(t)\in H_G^*(\pt)$ and we can view $\det A(\chi)$ an element of $QH^*_{\mathbb{G}}(T^*\calB)$, see Remark \ref{rmk:TequivtoG}. 
Since $QH^*_{\mathbb{G}}(T^*\calB)$ is a domain by Proposition \ref{thm:BMO} (\ref{thm:BMO(3)}), we have $\det A(\chi)=\pm e_n(t)$. 
As $\det A(\chi)=e_n(x)\bmod \hbar$, the only possibility is $\det A(\chi)=e_n(t)$.

\begin{Th}\label{prefortypeD}
For $G$ of type $D_n$, we have
\begin{equation}
QH^*_{\mathbb{T}}(T^*\calB) =
\frac{\calO(T^\vee_{\reg})
[\hbar,x_1,\ldots,x_n,t_1,\ldots,t_n]
}{\left<\begin{array}{l}
\mathcal{E}_{2k}(\chi)-(-1)^{k}e_{k}(t^2),k=1,\ldots,n-1\\
\det A(\chi)-e_n(t)
\end{array}\right>}.
\end{equation}
\end{Th}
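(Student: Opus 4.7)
The plan is to follow exactly the template of Theorem \ref{prefortypeBC} for types $B_n$ and $C_n$, applying Proposition \ref{Mainprop} to the fundamental weight orbit and then invoking the Deformation Principle (Proposition \ref{DefPrin}). The only new wrinkle in type $D_n$ is that the algebra $\Sym(\ft^*)^W$ is generated not by $e_1(t^2),\ldots,e_n(t^2)$ but by $e_1(t^2),\ldots,e_{n-1}(t^2)$ together with $e_n(t)$ (the extra ``half-determinant''), so one of the characteristic polynomial relations has to be replaced by the finer determinantal identity $\det A(\chi)=e_n(t)$.

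First I would verify that all listed relations lie in $\ker\pi$. The $W$-orbit of $-\mathbf{e}_1$ is the set of short weights $\{\pm\mathbf{e}_i\}_{i=1}^n$, so Proposition \ref{Mainprop} applied to $\lambda=-\mathbf{e}_1$ gives in $QH^*_{\bbT}(T^*\calB)$
\begin{equation*}
\det\bigl(y\,1_{2n}+M(\chi)\bigr)=\prod_{i=1}^n(y^2-t_i^2),
\end{equation*}
and comparison of coefficients produces $\mathcal{E}_{2k-1}(\chi)=0$ together with
$\mathcal{E}_{2k}(\chi)=(-1)^k e_k(t^2)$ for $k=1,\ldots,n-1$ (as well as for $k=n$, but the $k=n$ one will be discarded in favor of the stronger determinantal relation). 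For the relation $\det A(\chi)=e_n(t)$ I simply appeal to the paragraphs preceding the statement: Lemma \ref{Lemma:M=detAdetA} factors $\det M(\chi)$, the parity lemma shows $\det A(-\chi)=(-1)^n\det A(\chi)$, and combining these identities with $\det M(\chi)=(-1)^n e_n(t)^2$ inside the domain $QH^*_{\bbG}(T^*\calB)$ (cf. Proposition \ref{thm:BMO} (\ref{thm:BMO(3)}) and Remark \ref{rmk:TequivtoG}) pins down the sign via the classical limit $\det A(\chi)\equiv e_n(x)\bmod \hbar$.

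Next I would show that the listed relations generate $\ker\pi$ by applying Proposition \ref{DefPrin}. Modulo $\hbar$, the matrix $M(\chi)$ becomes diagonal with entries $x_1,\ldots,x_n,-x_n,\ldots,-x_1$ and $A(\chi)$ becomes diagonal with entries $x_1,\ldots,x_n$; therefore
\begin{align*}
\mathcal{E}_{2k}(\chi)-(-1)^k e_k(t^2)&\equiv (-1)^k\bigl(e_k(x^2)-e_k(t^2)\bigr)\pmod{\hbar},\\
\det A(\chi)-e_n(t)&\equiv e_n(x)-e_n(t)\pmod{\hbar}.
\end{align*}
These are precisely the relations of the form $f(D_\mu)-f(\mu)$ with $f$ ranging over a generating set of $\Sym(\ft^*)^W$ for the Weyl group of type $D_n$, so the hypotheses of the Deformation Principle are met and the given ideal coincides with $\ker\pi$.

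The main obstacle is not in the logical structure (which runs parallel to types $B_n,C_n$) but in the sign determination for $\det A(\chi)=e_n(t)$: the relation $\mathcal{E}_{2n}(\chi)=e_n(t)^2$ alone gives $\det A(\chi)=\pm e_n(t)$, and one must argue that the ambient ring $QH^*_{\bbG}(T^*\calB)$ is a domain to take square roots, then use the classical reduction $\det A(\chi)\equiv e_n(x)\bmod\hbar$ to eliminate the wrong sign. Since that argument has already been carried out in the discussion preceding the statement, the proof itself reduces to combining Proposition \ref{Mainprop}, Lemma \ref{Lemma:M=detAdetA}, and Proposition \ref{DefPrin}.
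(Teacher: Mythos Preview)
Your proposal is correct and matches the paper's argument essentially line for line: the paper likewise derives the characteristic polynomial identity for $M(\chi)$ from the conjugation $\Sigma^{-1}*M(\chi)*\Sigma=\Xi$ (the proof of Proposition~\ref{Mainprop}), establishes $\det A(\chi)=e_n(t)$ via Lemma~\ref{Lemma:M=detAdetA}, the parity lemma, the domain property of $QH^*_{\bbG}(T^*\calB)$, and the $\hbar=0$ specialization, and then invokes the Deformation Principle with the generating set $e_1(t^2),\ldots,e_{n-1}(t^2),e_n(t)$ of $\Sym(\ft^*)^W$.
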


\appendix

\section{Toda limit}\label{sect:Todalimit}

The equivariant quantum cohomology of $\calB$ can be obtained from that of $T^*\calB$, by taking a Toda limit as follows.
\begin{Def}
The \emph{Toda limit} of a class  
$\gamma\in QH^*_{\mathbb{G}}(T^*\calB)$
 is defined to be
$$\tdlim\gamma =
\lim_{\hbar\to\infty}\gamma(\hbar,\hbar^{-2\rho}q)\in QH_G^*(\calB),$$
where $\gamma(\hbar,\hbar^{-2\rho}q)$ denotes the class obtained from $\gamma$ by the substitution
\[q^\beta\mapsto \hbar^{-\left<2\rho,\beta\right>}q^\beta.\]
When $\gamma\in QH^*_{\mathbb{T}}(T^*\calB)$,  $\gamma \in QH^*_T(\calB)$ is also called the Toda limit. 
 \end{Def} 
 
\begin{Prop}[{\cite[Proposition 8.1]{MR2782198}}]
\label{Prop:TodaLimit}
For any classes $\gamma_1,\gamma_2 \in QH^*_{\mathbb{G}}(T^*\calB)$ whose Toda limits both converge, we have
   \[\big(\tdlim\gamma_1\big)*
   \big(\tdlim\gamma_2\big)
   =\tdlim(\gamma_1*\gamma_2)\]
These also hold if we replace $\mathbb{G}$ by $\bbT$.
\end{Prop}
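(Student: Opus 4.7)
The plan is to reduce the multiplicativity statement to the case of divisor classes, verify it there by direct computation from the quantum Chevalley formula, and then extend to arbitrary classes by iteration and associativity.

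First, I would note that by Proposition~\ref{thm:BMO}(\ref{thm:BMO(3)}), the algebra $QH^*_{\mathbb{G}}(T^*\calB)$ is generated over $\calO(T^\vee_{\reg})[\hbar]$ by the divisor classes $D_\lambda$, and likewise $QH^*_G(\calB)$ is generated by divisors (by the Borel presentation of Proposition~\ref{Th:BorelTh} together with the classical fact that divisors generate the quantum cohomology of $\calB$). Hence, by linearity and the associativity of both quantum products, it suffices to prove
\[(\tdlim D_\lambda)*(\tdlim \gamma) \;=\; \tdlim(D_\lambda * \gamma)\]
for every divisor $D_\lambda$ and every $\gamma$ with convergent Toda limit; the general case then follows by expressing $\gamma_1$ as a polynomial in divisor classes and iterating.

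Second, for the divisor case I would invoke the explicit quantum Chevalley formula of Proposition~\ref{thm:BMO}(\ref{thm:BMO(1)}), which presents the operator of quantum multiplication by $D_\lambda$ as
\[x_\lambda + \hbar \sum_{\alpha>0} \langle\lambda,\alpha^\vee\rangle \frac{q^{\alpha^\vee}}{1-q^{\alpha^\vee}}(s_\alpha - 1).\]
Under the Toda substitution $q^{\alpha^\vee} \mapsto \hbar^{-\langle 2\rho,\alpha^\vee\rangle} q^{\alpha^\vee}$, the summand indexed by $\alpha$ acquires an overall factor of order $\hbar^{1-\langle 2\rho,\alpha^\vee\rangle}$ times a function bounded as $\hbar\to\infty$. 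Since $\langle 2\rho,\alpha^\vee\rangle \geq 2$ for every positive root $\alpha$, with equality exactly on simple roots, only simple-root terms survive the limit; combining them with the leading $\hbar$-asymptotics of the Demazure--Lusztig operators one recovers the Fulton--Woodward quantum Chevalley formula for $QH^*_G(\calB)$. This identifies the Toda limit of ``$D_\lambda *$'' on $T^*\calB$ with quantum multiplication by $D_\lambda$ on $\calB$; applied to the Toda-convergent class $\gamma$, one obtains the desired equality.

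The main obstacle is the precise bookkeeping of $\hbar$-powers arising from the interplay between the Toda rescaling of $q$ and the intrinsic $\hbar$-dependence of the Demazure--Lusztig operators, which satisfy the affine Hecke relations $s_i x_\lambda = x_{s_i\lambda} s_i + \hbar\langle \alpha_i^\vee,\lambda\rangle$. Concretely, one must establish a bounded-weight compatibility: assigning $\hbar$ weight $+1$ and $q^{\beta}$ weight $-\langle 2\rho,\beta\rangle$, the structure constants of the $T^*\calB$ quantum product should have nonpositive weight, so that the Toda limit is compatible with $*$ whenever both inputs lie in the appropriate ``convergent'' subspace. Once this compatibility is verified, the divisor case together with the generation-and-associativity reduction closes the argument.
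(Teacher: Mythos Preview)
Your proposal is correct and matches the paper's own treatment. The paper does not give a self-contained proof of this proposition: it cites \cite[Proposition~8.1]{MR2782198} for the divisor case and then remarks that the general case follows either because the BMO proof already works verbatim, or because $QH^*_{\mathbb{G}}(T^*\calB)$ is generated by divisors --- precisely your reduction via generation and associativity. Your sketch of the divisor computation (Toda-rescale the quantum Chevalley formula of Proposition~\ref{thm:BMO}(\ref{thm:BMO(1)}), use $\langle 2\rho,\alpha^\vee\rangle\ge 2$ with equality only on simple roots, recover the Chevalley formula for $QH^*_G(\calB)$) is exactly the content of the cited BMO argument, and the $\hbar$-bookkeeping obstacle you flag is the only genuine technical point there.
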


\begin{Rmk}  Proposition 8.1 of \cite{MR2782198} only  stated for the case when $\gamma_1$ is a divisor.
Nevertheless, its proof therein works in general.
This also follows from the fact that $QH^*_{\mathbb{G}}(T^*\calB)$ is generated by divisors.
\end{Rmk}

By taking the Toda limit, we can obtain a ring presentation of the equivariant cohomology $QH^*_T(\calB)$ from $QH^*_{\bbT}(T^*\calB)$. 
Below we discuss such presentation for $\calB$ of classical type, and compare them with the nice description by using 
integrals of motions of the Toda lattice for the Langlands-dual Lie group by Kim \cite{MR1680543}.

\subsection*{Type $A_{n-1}$} Note $\rho=(n-1)\mathbf{e}_1+(n-2)\mathbf{e}_2+\cdots+\mathbf{e}_{n-1}.$
We conjugate \eqref{eq:MatA} by $$\operatorname{diag}\big(1,-\hbar,\dots,(-\hbar)^{n-1}\big).$$
Note that
$$
\tdlim
\frac{(-1)^{i-j}\hbar^{1+i-j}}{1-q_i/q_j}
=\lim_{\hbar\to\infty}\frac{(-1)^{j-i}\hbar^{1+i-j}}{1-\hbar^{2(i-j)}q_i/q_j}
=\begin{cases}
q_j/q_{j+1}, & i=j+1,\\
-1, & j=i+1,\\
0 & \text{otherwise}.
\end{cases}$$
After taking the Toda limit, we obtain the following matrix
\begin{equation}\label{eq:MatAcomp}
\left[
\begin{matrix}
x_1 &
-1&
0&
\cdots&
0\\[1ex]
q_1/q_2 &
x_2 &
-1 &
\cdots &
0\\[1ex]
0 &
q_2/q_3 &
x_3 &
\cdots &
0\\[1ex]
\vdots&\vdots&\vdots&\ddots&\vdots\\[1ex]
0&
0&
0&
\cdots &
x_n
\end{matrix}\right].
\end{equation}
By Proposition \ref{Prop:TodaLimit}, this covers the representation of $QH^*_T(\calB)$ of type $A_{n-1}$ in \cite{MR1680543}.

\subsection*{Type $B_{n}$}
Note $\rho = (n-\tfrac{1}{2})\mathbf{e}_1+(n-\tfrac{3}{2})\mathbf{e}_2+\cdots+\tfrac{1}{2}\mathbf{e}_n.$
We conjugate \eqref{eq:MatBC} by $$\operatorname{diag}\big(
\hbar^{-n+1/2},\dots,\hbar^{-1/2},
\hbar^{1/2},\dots,\hbar^{n-1/2}
\big).$$
By taking the Toda limit entry-wise, we obtain the following matrix
\begin{equation}\label{eq:MatBcomp}
\arraycolsep=1.4pt
\def\arraystretch{0}
\left[
\begin{array}{cccc|cccc}
x_1 & 1&&&&\\[1ex]
-q_1/q_2& x_2 &\ddots&&&&\\[1ex]
&\ddots &\ddots & 1\\[1ex]
&&-q_{n-1}/q_n& x_n& 2 \\
    [1ex]\hline\rule{0pc}{3ex}
&&&-2q_{n}^2& -x_n& 1 \\[1ex]
&&&&-q_{n}/q_{n-1}& \ddots&\ddots \\[1ex]
&&&&&\ddots&-x_2 & 1\\[1ex]
&&&&&&-q_1/q_2 & -x_1\\[1ex]
\end{array}\right].
\end{equation}

We need the following lemma.

\begin{Lemma}[\cite{MR0114826}]\label{detoftridiag}
The determinant of a tridiagonal matrix is given by 
$$\det\left[
\begin{matrix}
x_1 & a_{1} & 0& \cdots& 0\\[1ex]
b_{1} & x_2 & a_{2} & \cdots & 0\\[1ex]
0 & b_{2} & x_3 & \cdots & 0\\[1ex]
\vdots&\vdots&\vdots&\ddots&\vdots\\[1ex]
0&0&0& \cdots & x_n
\end{matrix}\right]
=\sum_{\sigma}\bigg(\prod_{i=\sigma(i)}x_i
\prod_{\sigma(i)=i+1}(-a_{i}b_i)\bigg),$$
where $\sigma$ goes over involutions over $\{1,\ldots,n\}$ such that $|\sigma(i)-i|\leq 1$ for any $1\leq i\leq n$.
\end{Lemma}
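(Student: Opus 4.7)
The plan is to expand the determinant using the Leibniz formula
$$\det(T) = \sum_{\sigma \in \mathfrak{S}_n} \mathrm{sgn}(\sigma)\prod_{i=1}^n T_{i,\sigma(i)},$$
where $T$ denotes the given tridiagonal matrix, and then identify which permutations yield a nonzero contribution. Because $T_{i,j} = 0$ whenever $|i-j| > 1$, only those $\sigma$ satisfying $|\sigma(i) - i| \le 1$ for every $i$ survive. So the main task is to analyze the cycle structure of such a $\sigma$ and then keep track of signs.

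The central combinatorial observation I would establish is that any such $\sigma$ is automatically an involution whose nontrivial cycles are adjacent transpositions $(i,\,i+1)$. To see this, let $C$ be a nontrivial cycle and let $m$ be its smallest element. Since $\sigma(m) \neq m$ and $\sigma(m) \geq m$ (as $m$ is the minimum of $C$), the constraint $|\sigma(m) - m| \le 1$ forces $\sigma(m) = m+1$. The same argument applied to $\sigma^{-1}$ shows $\sigma^{-1}(m) = m+1$, so $\{m,m+1\}$ itself is a $2$-cycle and $C = (m,\,m+1)$. Iterating over all nontrivial cycles gives the claim.

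Finally I would compute each surviving contribution. For an involution $\sigma$ of the above form with $k$ two-cycles, $\mathrm{sgn}(\sigma) = (-1)^k$, and each $2$-cycle $(i,\,i+1)$ produces the factor $T_{i,i+1} T_{i+1,i} = a_i b_i$, which combines with one of the sign factors into $-a_i b_i$; each fixed point contributes $T_{i,i} = x_i$. Assembling these pieces yields the asserted formula. The only conceptual step is the cycle-structure lemma in paragraph two; everything else is bookkeeping, so I expect no genuine obstacle.
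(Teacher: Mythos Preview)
Your argument is correct. The Leibniz expansion immediately kills all $\sigma$ that move some index by more than $1$; your observation that the smallest element $m$ of a nontrivial cycle must satisfy $\sigma(m)=\sigma^{-1}(m)=m+1$ cleanly forces every nontrivial cycle to be an adjacent transposition, hence $\sigma$ is an involution of the stated shape. The sign-and-entry bookkeeping is then exactly as you describe.

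As for comparison with the paper: the paper does not supply a proof of this lemma at all; it simply records the statement with a reference to \cite{MR0114826} and uses it as a black box. Your write-up therefore provides a short, self-contained verification that the paper omits.
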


As a corollary, replacing $(a_i,b_i)$ by $(-a_i,-b_i)$ for any $1\leq i\leq n$ does not affect the characteristic polynomial of a tridiagonal matrix.
Using this Lemma \ref{detoftridiag}, by suitable re-assigning sign for \eqref{eq:MatBcomp} and rearranging columns and rows if necessary, we recover the matrix in $\mathfrak{sp}_{n}$ obtained in \cite{MR1680543}.

\subsection*{Type $C_n$} Note $\rho = n\mathbf{e}_1+(n-1)\mathbf{e}_2+\cdots+\mathbf{e}_n.$
We can conjugate via the same diagonal matrix
$$\operatorname{diag}\big(
\hbar^{-n+1/2},\dots,\hbar^{-1/2},
\hbar^{1/2},\dots,\hbar^{n-1/2}
\big).$$
Since the $p$ is different from type $B$, we obtain the following matrix
\begin{equation}\label{eq:MatCcomp}
\arraycolsep=1.4pt
\def\arraystretch{0}
M=\left[
\begin{array}{@{}cccc|cccc@{}}
x_1 & 1&&&&\\[1ex]
-q_1/q_2& x_2 &\ddots&&&&\\[1ex]
&\ddots &\ddots & 1\\[1ex]
&&-q_{n-1}/q_n& x_n& 1 \\
    [1ex]\hline\rule{0pc}{3ex}
&&&-q_{n}& -x_n& 1 \\[1ex]
&&-q_{n-1}&&-q_{n}/q_{n-1}& \ddots&\ddots \\[1ex]
&\reflectbox{$\ddots$}&&&&\ddots&-x_2 & 1\\[1ex]
-q_1&&&&&&-q_1/q_2 & -x_1\\[1ex]
\end{array}\right].
\end{equation}

\begin{Lemma}
  
The polynomial $y\det(y1_{2n}+M)$ is the characteristic polynomial of the following matrix
\begin{equation}\label{eq:MatCcomp2}
\arraycolsep=1.4pt
\def\arraystretch{0}
\left[
\begin{array}{@{}c@{\,\,}c@{\,\,}c@{\,\,}c|c|c@{\,}c@{\,}c@{\,}c@{}}
x_1 & -1&&&&\\[1ex]
q_1/q_2& x_2 &\ddots&&&\\[1ex]
&\ddots &\ddots & -1&\\[1ex]
&&q_{n-1}/q_n& x_n&-1/2\\
    [1ex]\hline\rule{0pc}{3ex}
&&&q_{n}& 0& 1/2 \\
    [1ex]\hline\rule{0pc}{3ex}
&&&&-q_n& -x_n& 1 \\[1ex]
&&&&&-q_{n}/q_{n-1}& \ddots&\ddots \\[1ex]
&&&&&&\ddots&-x_2 & 1\\[1ex]
&&&&&&&-q_1/q_2 & -x_1\\[1ex]
\end{array}\right].
\end{equation}
\end{Lemma}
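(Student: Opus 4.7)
The plan is to compute both $\det(yI_{2n+1}+M')$ and $y\det(yI_{2n}+M)$ via Schur complement and then reduce the claim to an algebraic matching of minors of tridiagonal matrices.

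First, I would permute the middle row and column of $yI+M'$ to the last position; this contributes a sign $(-1)^{2n}=1$. The matrix then takes the form $\bigl(\begin{smallmatrix} S & c \\ r & y\end{smallmatrix}\bigr)$ with $S=\operatorname{diag}(yI_n+A',\,yI_n+B)$ block-diagonal, and $r,c$ recording the middle row and column. The Schur complement yields $\det(yI+M')=\det(S)\,(y-rS^{-1}c)$. Since $r$ and $c$ each have only two nonzero entries (at positions $n$ and $n+2$), the scalar $rS^{-1}c$ collapses to $-\tfrac{q_n}{2}\bigl([(yI+A')^{-1}]_{n,n}+[(yI+B)^{-1}]_{1,1}\bigr)$, and the adjugate formula rewrites this as a combination of principal minors of $yI+A'$ and $yI+B$. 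The sign-flip corollary to Lemma~\ref{detoftridiag} identifies $\det(yI+A')$ and its principal minors with those of $yI+A$, so the final expression involves only $yI+A$ and $yI+B$.

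Next, for the matrix $M$ itself, the plan is to apply Schur complement with respect to the bottom-right block $yI+B$ and then the matrix determinant lemma to the rank-one correction $e_n e_1^T$ in the top-right block of $M$. The antidiagonal block $K$ enters through the vector $\phi^{T}=e_1^{T}(yI+B)^{-1}K$, leading to an expansion of the form
\[\det(yI+M)=\det(yI+A)\det(yI+B)+\sum_{j=1}^{n} q_j\,\det\!\bigl((yI+A)^{(n,j)}\bigr)\det\!\bigl((yI+B)^{(n+1-j,1)}\bigr),\]
where $(\,\cdot\,)^{(i,j)}$ denotes the $(i,j)$-minor.

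The hard part will be the final matching: after multiplying the above by $y$ and equating with the Schur complement expression for $\det(yI+M')$, one obtains an identity among sums of products of principal and near-principal minors of the tridiagonal matrices $yI+A$ and $yI+B$. Lemma~\ref{detoftridiag} together with the three-term recursion satisfied by leading principal minors of tridiagonals should reduce this to a combinatorial identity checkable by carefully grouping terms coming from pairings of indices near the split point $n$; organizing this bookkeeping is where the bulk of the computation will be concentrated.
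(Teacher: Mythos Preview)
Your plan is sound and, if carried through, would give a complete argument; but it is organized quite differently from the paper's own sketch.

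The paper does not use Schur complements at all. Instead it Laplace--expands $\det(yI_{2n}+M)$ directly along the antidiagonal entries $-q_1,\dots,-q_n$ in the lower--left block, obtaining an expression of the shape
\[
q_1 + q_2\det[\,\cdot\,]_{2\times2} + q_3\det[\,\cdot\,]_{4\times4} + \cdots,
\]
where each bracketed determinant is a block--diagonal tridiagonal minor in the variables $y\pm x_i$. The key device is then purely algebraic: after multiplying by $y$, one rewrites each factor $yq_i$ as
\[
yq_i \;=\; \tfrac{1}{2}\,\frac{q_i}{q_{i+1}}\cdots\frac{q_{n-1}}{q_n}\,q_n\,(y+x_i)
\;+\;\tfrac{1}{2}\,\frac{q_i}{q_{i+1}}\cdots\frac{q_{n-1}}{q_n}\,q_n\,(y-x_i),
\]
which manufactures precisely the diagonal entries $y\pm x_i$ and the subdiagonal ratios $q_k/q_{k+1}$ appearing in the tridiagonal blocks of the target $(2n+1)\times(2n+1)$ matrix. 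The matching is then finished by Lemma~\ref{detoftridiag}.

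Your route---Schur complement on $M'$ about the middle row/column, and Schur complement plus the matrix determinant lemma on $M$---is perfectly legitimate and has the advantage of being mechanical: everything reduces to the entries $[(yI+A)^{-1}]_{n,n}$, $[(yI+B)^{-1}]_{1,1}$ and the cofactors $C_{n,j}(yI+A)$, $C_{n+1-j,1}(yI+B)$, all of which are standard for tridiagonal matrices. What you lose is the paper's shortcut: the $yq_i$ rewriting collapses the telescoping in one stroke, whereas in your framework the same cancellation has to be extracted by hand from the three--term recursions for the leading minors $P_j(yI+A)$ and the trailing minors $Q_j(yI+B)$. That step is doable (and indeed both arguments end with an appeal to ``tedious computation''), but be aware that the bookkeeping you flag at the end is exactly the content the paper's trick is designed to bypass.
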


\begin{proof} We just give a sketch here.
By expanding the determinant, the characteristic polynomial $\det(y1_{2n}+M)$ equals 
\[
q_1+q_2
\det\big[\begin{subarray}{c}
y+x_1\\[0.5ex]
  \phantom{y-x_0}
\end{subarray}
\begin{subarray}{c}
  \phantom{y-x_0}\\[0.5ex]
y-x_1
\end{subarray}\big]
+q_3
\det\left[
\begin{subarray}{c}
y+x_1\\[0.5ex]
-q_1/q_2\\[0.5ex]
  \phantom{y-x_0}\\[0.5ex]
  \phantom{y-x_0}\\[0.5ex]
\end{subarray}\,\,
\begin{subarray}{c}
1\\[0.5ex]
y+x_2\\[0.5ex]
  \phantom{y-x_0}\\[0.5ex]
  \phantom{y-x_0}\\[0.5ex]
\end{subarray}\,\,
\begin{subarray}{c}
  \phantom{y-x_0}\\[0.5ex]
  \phantom{y-x_0}\\[0.5ex]
y-x_2\\[0.5ex]
-q_1/q_2\\[0.5ex]
\end{subarray}\,\,
\begin{subarray}{c}
  \phantom{y-x_0}\\[0.5ex]
  \phantom{y-x_0}\\[0.5ex]
1\\[0.5ex]
y-x_1\\[0.5ex]
\end{subarray}
\right]+\cdots
\]
Let us write $yq_i$ as
$$\frac{1}{2}(q_i/q_{i+1})\cdots (q_{n-1}/q_n)q_n(y+x_i)
+\frac{1}{2}(q_i/q_{i+1})\cdots (q_{n-1}/q_n)q_n(y-x_i).
$$
By Lemma \ref{detoftridiag} and tedious computation, we can conclude that
$y\det(y1_{2n}+M)$ coincides with the characteristic polynomial of the matrix \eqref{eq:MatCcomp2}.
\end{proof}

By suitable re-assigning sign for \eqref{eq:MatCcomp2} and rearranging columns and rows if necessary, we recover the matrix in $\mathfrak{so}_{2n+1}$ obtained in \cite{MR1680543}.

\subsection*{Type $D_n$}
 Note $\rho = (n-1)\mathbf{e}_1+(n-2)\mathbf{e}_2+\cdots+\mathbf{e}_{n-1}.$
We can conjugate the diagonal matrix
$$\operatorname{diag}\big(
\hbar^{-n+1},\dots,-\hbar^{-1},1,1,
\hbar,\dots,\hbar^{n-1}
\big).$$
We obtain the following matrix
\begin{equation}\label{eq:MatDcomp}
\arraycolsep=1.4pt
\def\arraystretch{0}
\left[
\begin{array}{@{}cc@{\,}c@{\,}c|cccc@{}}
x_1 & 1&&&&\\[1ex]
-q_1/q_2& x_2 &\ddots&&&&\\[1ex]
&\ddots &\ddots & 1&1\\[1ex]
&&-q_{n-1}/q_n& x_n& &1 \\
    [1ex]\hline\rule{0pc}{3ex}
&&-q_{n-1}q_n&& -x_n& 1 \\[1ex]
&&&-q_{n-1}q_n&-q_{n}/q_{n-1}& \ddots&\ddots \\[1ex]
&&&&&\ddots&-x_2 & 1\\[1ex]
&&&&&&-q_1/q_2 & -x_1\\[1ex]
\end{array}\right].
\end{equation}
By expanding determinant directly, it is not hard to show that it has the same characteristic polynomial as that of
\begin{equation}\label{eq:MatDcomp2}
\arraycolsep=1.4pt
\def\arraystretch{0}
\left[
\begin{array}{@{}cc@{\,}c@{\,}c|cccc@{}}
x_1 & -1&&&&\\[1ex]
q_1/q_2& x_2 &\ddots&&&&\\[1ex]
&\ddots &\ddots & -1&-1\\[1ex]
&&q_{n-1}/q_n& x_n& &1 \\
    [1ex]\hline\rule{0pc}{3ex}
&&q_{n-1}q_n&& -x_n& 1 \\[1ex]
&&&-q_{n-1}q_n&-q_{n}/q_{n-1}& \ddots&\ddots \\[1ex]
&&&&&\ddots&-x_2 & 1\\[1ex]
&&&&&&-q_1/q_2 & -x_1\\[1ex]
\end{array}\right].
\end{equation}
which is the matrix for $\mathfrak{so}_{2n}$ in \cite{MR1680543}.

 \section{Proof of Theorem \ref{combforEk}}\label{sec:Acomb}
 For type $A_{n-1}$,  the Weyl group $W$ is given by the symmetric group $\mathfrak{S}_{n}$. 
\begin{Lemma}\label{lemmaof2}
For $n\geq 2$, we have
\begin{equation}\label{eq:lemmaof2}
\sum_{\begin{subarray}{c}
\sigma\in\mathfrak{S}_{n}\\
\sigma(1)=1
\end{subarray}} 
\frac{1}{t_{\sigma(1)}{-}t_{\sigma(2)}}\cdots
\frac{1}{t_{\sigma(n{-}1)}{-}t_{\sigma(n)}}
\frac{1}{t_{\sigma(n)}{-}t_{\sigma(1)}}
=\begin{cases}
\frac{-1}{(t_1-t_2)^2},\!\!\!&n=2,\\
0, & n>2.
\end{cases}
\end{equation}
\end{Lemma}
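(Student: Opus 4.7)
For $n = 2$ the sum contains only one term, namely $\frac{1}{(t_1-t_2)(t_2-t_1)} = -\frac{1}{(t_1-t_2)^2}$, which matches the stated value. For $n > 2$, I would first observe that the summand is invariant under cyclic rotation of $\sigma$, so the restricted sum equals $\frac{1}{n}S_n$ where
\[ S_n := \sum_{\sigma\in\mathfrak{S}_n}\frac{1}{(t_{\sigma(1)}-t_{\sigma(2)})\cdots (t_{\sigma(n-1)}-t_{\sigma(n)})(t_{\sigma(n)}-t_{\sigma(1)})}. \]
Thus it suffices to show $S_n = 0$ for $n > 2$.

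Next, $S_n$ is a rational function that is homogeneous of total degree $-n$ in the variables $(t_1,\dots,t_n)$. If one can verify that $S_n$ has no poles along any hyperplane $t_i = t_j$, then it must be a polynomial of strictly negative degree, hence identically zero. This reduces the task to a residue calculation.

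Fix $i\neq j$. Only those permutations $\sigma$ for which $i$ and $j$ are \emph{cyclically adjacent} (i.e.\ $\sigma^{-1}(i)$ and $\sigma^{-1}(j)$ differ by $1$ mod $n$) contribute to a pole at $t_i = t_j$; moreover, for $n>2$ each pair of variables appears in at most one factor of the cyclic product, so the pole is simple. I would pair every such $\sigma$ with $\sigma' := (i\,j)\circ \sigma$, which is again a permutation with $i,j$ cyclically adjacent but in the reversed cyclic order. This defines a fixed-point-free involution on the contributing set. Comparing the two cyclic products factor-by-factor, every factor not involving $i$ or $j$ is unchanged, the two factors of the form $(t_\ast - t_i)$ or $(t_j - t_\ast)$ around the adjacency agree after specializing $t_i = t_j$ (since swapping labels $i\leftrightarrow j$ has no effect on the hyperplane), while the decisive adjacency factor is $(t_i-t_j)$ for $\sigma$ versus $(t_j-t_i)$ for $\sigma'$. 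Therefore the residues at $t_i=t_j$ of the two summands are negatives of each other, and they cancel pairwise. Combined with the degree argument, $S_n = 0$.

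The main subtlety is to verify carefully that for $n > 2$ each cyclic product is a simple (not higher-order) pole along $t_i=t_j$, and that after cancelling the decisive factor the remaining pieces for $\sigma$ and $\sigma'$ genuinely coincide; one must also check that the pairing $\sigma\mapsto (ij)\sigma$ is compatible with the residue computation at every $\{i,j\}$. The case $n=2$ is a healthy sanity check: there the cyclic product is $(t_1-t_2)(t_2-t_1)$, which has a \emph{double} pole along $t_1=t_2$, so the simple-pole residue-pairing argument genuinely fails, consistent with the explicit nonzero answer $-(t_1-t_2)^{-2}$.
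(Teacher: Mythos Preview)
Your argument is correct and takes a genuinely different route from the paper. After the common reduction to the unrestricted sum $S_n$ via cyclic invariance, the paper recognizes $S_n$ as the Atiyah--Bott localization expression for $\int_{\calB}\gamma$ where $\gamma=-\prod_{i<j,\ j-i\neq 1,n-1}(x_i-x_j)\in H^*_{\bbT}(\calB)$; since $\deg_{\bbC}\gamma=\binom{n}{2}-n<\dim\calB$ for $n>2$, this integral vanishes for degree reasons. Your approach stays purely elementary: $S_n$ is a homogeneous rational function of degree $-n$ whose only possible poles lie on the hyperplanes $t_i=t_j$, and you kill each such pole by the involution $\sigma\mapsto(ij)\sigma$ on the set of terms with $i,j$ cyclically adjacent, so that $S_n$ is a polynomial of negative degree, hence zero. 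The paper's proof is slicker once the equivariant machinery is available and gives a conceptual reason for the identity (it is a vanishing integral over a flag variety), whereas your residue--pairing argument requires no geometric input and would be accessible to a reader unfamiliar with localization; it also makes transparent why $n=2$ is exceptional, since there the cyclic product acquires a double pole and the pairing degenerates.
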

\begin{proof}
This assertion is obvious for $n=2$. Now we assume $n>2$. It suffices to show
\begin{equation}\label{eq:lemmaof}
\sum_{\sigma\in\mathfrak{S}_n} \frac{1}{t_{\sigma(1)}-t_{\sigma(2)}}\cdots
\frac{1}{t_{\sigma(n-1)}-t_{\sigma(n)}}
\frac{1}{t_{\sigma(n)}-t_{\sigma(1)}}
=0
\end{equation}
for $n>2$.
In fact, each term of the sum \eqref{eq:lemmaof} is invariant under the action of the cycle $(12\cdots n)$, so it implies \eqref{eq:lemmaof2}.
Following the idea of \cite{LaszloJanosAdd}, we consider the following class in  $H_{\bbT}^*(\calB)$,
$$\gamma= -\prod_{
\begin{subarray}{c}
i<j\\
j-i\neq 1,n-1
\end{subarray}}(x_i-x_j),$$
where we recall $x_i=D_{-\mathbf{e}_i}$ and $t_i=-\mathbf{e}_i$. 
By Atyiah--Bott localization theorem \cite{MR721448}, we have
\begin{equation*}
\int_{\calB}\gamma =\sum_{\sigma\in\mathfrak{S}_n}\frac{\gamma|_{\sigma}}{\prod_{i<j}(t_{\sigma(i)}-t_{\sigma(j)})}
\in H_{\bbT}^*(\pt)\end{equation*}
which is exactly the left-hand side of \eqref{eq:lemmaof}.
But $\deg_{\bbC} \gamma = \frac{n(n-1)}{2}-n<\dim \calB$, thus $\int_{\calB}\gamma=0$.
\end{proof}

\begin{Lemma}\label{antiCauchy}We have
\begin{equation}\label{eq:antiCauchy}
\det\left[
\begin{matrix}
0 &
\frac{1}{t_1-t_2}&
\frac{1}{t_1-t_3}&
\cdots &
\frac{1}{t_1-t_n}\\[1ex]
\frac{1}{t_2-t_1} &
0 &
\frac{1}{t_2-t_3} &
\cdots &
\frac{1}{t_2-t_n}\\[1ex]
\frac{1}{t_3-t_1} &
\frac{1}{t_3-t_2} &
0 &
\cdots &
\frac{1}{t_3-t_n}\\[1ex]
\vdots&\vdots&\vdots&\ddots&\vdots\\[1ex]
\frac{1}{t_n-t_1} &
\frac{1}{t_n-t_2} &
\frac{1}{t_n-t_3} &
\cdots &
0
\end{matrix}\right]=\sum_{\pi}\prod_{\pi(i)=j>i}\frac{1}{(t_{i}-t_{j})^2}
\end{equation}
where the sum is over all perfect matchings $\pi$ of $\{1,\ldots,n\}$. 
\end{Lemma}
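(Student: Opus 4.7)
\textbf{Proof plan for Lemma \ref{antiCauchy}.} The plan is to expand the determinant via the permutation definition and reduce to fixed-point-free involutions using Lemma \ref{lemmaof2} applied cycle-by-cycle. Denote the matrix by $M$, so $M_{ii}=0$ and $M_{ij}=1/(t_i-t_j)$ for $i\neq j$. Write
\[
\det M \;=\; \sum_{\sigma\in \mathfrak{S}_n}\mathrm{sgn}(\sigma)\prod_{i=1}^n M_{i,\sigma(i)}.
\]
Because $M_{ii}=0$, only derangements $\sigma$ (no fixed points) contribute. For a derangement $\sigma$ with cycle decomposition $\sigma=c_1c_2\cdots c_r$, the factor $\prod_i M_{i,\sigma(i)}$ factorizes as a product over cycles, and $\mathrm{sgn}(\sigma)=\prod_j(-1)^{|c_j|-1}$ also factorizes.

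The key step is to show that every derangement possessing a cycle of length $\geq 3$ contributes zero. Fix such a $\sigma$ and isolate one cycle $c$ of length $k\geq 3$ supported on a subset $S\subset\{1,\dots,n\}$. Holding all other cycles of $\sigma$ fixed and summing over all $k$-cycles supported on $S$, the inner sum equals
\[
\sum_{\substack{k\text{-cycles } c \text{ on } S}}\prod_{i\in S}M_{i,c(i)},
\]
which, after relabeling the elements of $S$ as $\{1,\dots,k\}$ and parameterizing $k$-cycles by $\sigma\in\mathfrak{S}_k$ with $\sigma(1)=1$, becomes exactly the left-hand side of \eqref{eq:lemmaof2}. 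By Lemma \ref{lemmaof2} this vanishes for $k\geq 3$. Since $\mathrm{sgn}(\sigma)$ does not depend on which particular $k$-cycle is chosen on $S$ (only on $k$), the signed contribution also vanishes. Thus only derangements whose cycles all have length exactly $2$, i.e.\ perfect matchings $\pi$, survive; in particular the determinant vanishes when $n$ is odd, matching the empty sum on the right.

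It remains to compute the contribution of a perfect matching $\pi$ (so $n$ must be even). Each $2$-cycle $(i\,j)$ contributes $M_{ij}M_{ji}=-1/(t_i-t_j)^2$ and carries sign $-1$, so the signed contribution of $\pi$ is
\[
(-1)^{n/2}\prod_{\pi(i)=j>i}\frac{-1}{(t_i-t_j)^2}\;=\;\prod_{\pi(i)=j>i}\frac{1}{(t_i-t_j)^2},
\]
since the two factors of $(-1)^{n/2}$ cancel. Summing over all perfect matchings yields the right-hand side of \eqref{eq:antiCauchy}.

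The main (essentially only) obstacle is the cycle-cancellation step: one must argue carefully that Lemma \ref{lemmaof2} is applicable even when the cycle $c$ lives inside a larger permutation, i.e.\ that the remaining factor $\prod_{i\notin S}M_{i,\sigma(i)}$ and the sign of the complementary permutation are constants with respect to the summation over $k$-cycles on $S$. Both are immediate from the multiplicativity of the sign and of the matrix-entry product over disjoint cycles, so the cancellation is clean.
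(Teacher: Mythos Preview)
Your argument is essentially the paper's own proof: expand over permutations, discard those with fixed points, group the remaining derangements by varying one long cycle while freezing the others, and kill each group with Lemma \ref{lemmaof2}. The computation for the surviving perfect matchings is also the same.

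The one point that needs tightening is the phrase ``isolate one cycle $c$ of length $k\geq 3$''. As written, this does not define a partition of the set of bad derangements: if $\sigma$ has two cycles of length $\geq 3$, it lies in two different groups depending on which cycle you choose to vary, and then ``each group sums to zero'' does not immediately give ``the total sums to zero''. The paper fixes this by a canonical choice: take $\eta_\sigma$ to be the cycle of length $>2$ containing the smallest index appearing in any such cycle, and let $[\sigma]$ be the set of permutations obtained by replacing $\eta_\sigma$ with an arbitrary $|\eta_\sigma|$-cycle on the same support. One checks that $A_{\sigma'}=A_\sigma$ for every $\sigma'\in[\sigma]$, so the classes $[\sigma]$ genuinely partition the bad derangements, and your cancellation via Lemma \ref{lemmaof2} then applies cleanly. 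With that one-line addition your proof is complete and matches the paper's.
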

\begin{proof}Let us consider the expansion of the above determinant
$$\sum_{\sigma\in \mathfrak{S}_n} (-1)^{\ell(\sigma)}m_{\sigma}.$$
Firstly since the diagonal entries vanish, we have $m_{\sigma}=0$ if $\sigma$ has a fixed point.
Notice that the right-hand side of \eqref{eq:antiCauchy} equals the sum over permutations $\sigma$ such that $\sigma^2=\id$.

For any permutation $\sigma$ such that $\sigma^2\neq \id$, the cycle decomposition of $\sigma$ must have a cycle of length $>2$. 
Let $\eta_{\sigma}$ be the cycle of $\sigma$ of length $>2$ containing the smallest index, and denote by $A_{\sigma}$ the set of indices of $\eta_\sigma$.
Let $[\sigma]$ be the set of permutations $\sigma'$ such that $A_{\sigma}=A_{\sigma'}$ and
$\sigma(i)=\sigma'(i)$ for all $i\notin A_{\sigma}$.
In other words, $\sigma'\in [\sigma]$ is obtained by a permutation of indices of $\eta_{\sigma}$ of $\sigma$.
By Lemma \ref{lemmaof2} above,
$$\sum_{\sigma'\in [\sigma]} m_{\sigma'} = 0$$
Note that all permutations $\sigma'\in [\sigma]$ have the same sign. The proof is complete.
\end{proof}

\begin{Lemma}\label{Lemmawhenk=n}
Theorem \ref{combforEk} is true for $k=n$, i.e.
\begin{align*}
\det\left[\begin{matrix}
\chi_1 &
\frac{\hbar}{1-q_{1}/q_{2}}&
\cdots &
\frac{\hbar}{1-q_{1}/q_{n}}\\[1ex]
\frac{\hbar}{1-q_{2}/q_{1}} &
\chi_2 &
\cdots &
\frac{\hbar}{1-q_{2}/q_{n}}\\[1ex]
\vdots&\vdots
&\ddots&\vdots\\[1ex]
\frac{\hbar}{1-q_{n}/q_{1}} &
\frac{\hbar}{1-q_{n}/q_{2}} &
\cdots & \chi_n
\end{matrix}\right] &=\sum_{\pi}J(\pi)V(\pi),
\end{align*}
with the sum over all matchings $\pi$ over $\{1,\ldots,n\}$.
\end{Lemma}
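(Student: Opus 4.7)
\textbf{Proof plan for Lemma \ref{Lemmawhenk=n}.} The plan is to expand the determinant via the Leibniz formula and show that every non-involution contributes zero after an appropriate grouping, so that only matchings survive.

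Write
\[ \det M(\chi)=\sum_{\sigma\in\mathfrak{S}_n}(-1)^{\ell(\sigma)}m_\sigma,\qquad m_\sigma=\prod_{i=1}^n M(\chi)_{i,\sigma(i)}. \]
First, I would identify the contributions of involutions. If $\pi$ is an involution with $f$ fixed points and $t$ transpositions, then $(-1)^{\ell(\pi)}=(-1)^t$, and the transposition $(i,j)$ contributes $\tfrac{\hbar}{1-q_i/q_j}\cdot\tfrac{\hbar}{1-q_j/q_i}$. A direct computation gives $\tfrac{\hbar^2}{(1-q_i/q_j)(1-q_j/q_i)}=\tfrac{-\hbar^2 q_iq_j}{(q_i-q_j)^2}$, so the combined sign is $(-1)^t(-1)^t=1$, and the contribution of $\pi$ is exactly $J(\pi)V(\pi)$.

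Next, I would show all other $\sigma$ contribute zero. Call $\sigma$ \emph{bad} if some cycle of $\sigma$ has length $\geq 3$; for such $\sigma$ let $\eta_\sigma$ be the bad cycle containing the smallest index $i_0$ among all indices lying in some cycle of length $\geq 3$, and let $A_\sigma$ be its support, of size $k\geq 3$. Define $[\sigma]$ to consist of all permutations $\sigma'$ obtained from $\sigma$ by replacing $\eta_\sigma$ with an arbitrary $k$-cycle on $A_\sigma$. Then $[\sigma]$ is an equivalence class well-defined on bad permutations; all members share cycle type (so share sign $(-1)^{\ell(\sigma)}$), share fixed points and other cycles, and thus share a common factor in $m_{\sigma'}$. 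The only varying piece is the contribution from the bad cycle. Rewriting $\tfrac{\hbar}{1-q_a/q_b}=\tfrac{\hbar q_b}{q_b-q_a}$, that piece equals
\[ \hbar^k\Bigl(\prod_{j\in A_\sigma}q_j\Bigr)\!\!\sum_{k\text{-cycles }(a_1\cdots a_k)\text{ on }A_\sigma}\;\prod_{i=1}^k\frac{1}{q_{a_{i+1}}-q_{a_i}}, \]
and the inner sum vanishes by Lemma \ref{lemmaof2} applied with $t_i=q_i$ (up to the global sign $(-1)^k$ coming from reversing the order $q_{a_i}-q_{a_{i+1}}$ vs.\ $q_{a_{i+1}}-q_{a_i}$, which is harmless since the whole sum is zero). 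Hence $\sum_{\sigma'\in[\sigma]}m_{\sigma'}=0$.

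Since the equivalence classes $[\sigma]$ partition the set of bad permutations, the total contribution of non-involutions vanishes, and $\det M(\chi)=\sum_{\pi}J(\pi)V(\pi)$ where $\pi$ ranges over matchings of $\{1,\ldots,n\}$. The principal bookkeeping point, and the only subtle step, is verifying that $[\sigma]$ is well-defined (the designated bad cycle is intrinsically determined by the cycle structure away from $A_\sigma$ together with $A_\sigma$, because all $\sigma'\in[\sigma]$ produce a cycle of the same length on the same set $A_\sigma$ containing the smallest relevant index); everything else is routine algebra plus an invocation of Lemma \ref{lemmaof2}.
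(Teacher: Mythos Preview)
Your proof is correct and follows essentially the same strategy as the paper. The only organisational difference is that the paper first expands the determinant as a polynomial in the diagonal entries $\chi_i$, thereby reducing to zero-diagonal submatrices, and then packages the cycle-cancellation argument as a separate result (Lemma~\ref{antiCauchy}); you instead run the Leibniz expansion in one pass, treating fixed points, transpositions, and long cycles simultaneously and invoking Lemma~\ref{lemmaof2} directly. The mathematical content---grouping permutations by the long cycle through the smallest ``bad'' index and using Lemma~\ref{lemmaof2} to kill each class---is identical.
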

\begin{proof}
By the expansion in polynomials in $\chi_i$'s,  the determinant is equal to
\begin{align*}
\sum_{A} {\chi}_{a_1}\cdots {\chi}_{a_{s}}
\det\left[\begin{matrix}
0 &
\frac{\hbar}{1-q_{b_1}/q_{b_2}}&
\cdots &
\frac{\hbar}{1-q_{b_1}/q_{b_t}}\\[1ex]
\frac{\hbar}{1-q_{b_2}/q_{b_1}} &
0 &
\cdots &
\frac{\hbar}{1-q_{b_2}/q_{b_t}}\\[1ex]
\vdots&\vdots
&\ddots&\vdots\\[1ex]
\frac{\hbar}{1-q_{b_d}/q_{b_1}} &
\frac{\hbar}{1-q_{b_d}/q_{b_2}} &
\cdots & 0
\end{matrix}\right]
\end{align*}
where the sum goes over all subsets $A=\{a_1,\ldots,a_s\}$ of $\{1,\ldots,n\}$ with complement $\{b_1<\cdots<b_t\}$.
Note that by Lemma \ref{antiCauchy}
\begin{align*}
&\quad
\det\left[\begin{matrix}
  0 &
  \frac{\hbar}{1-q_{b_1}/q_{b_2}}&
  \cdots &
  \frac{\hbar}{1-q_{b_1}/q_{b_t}}\\[1ex]
  \frac{\hbar}{1-q_{b_2}/q_{b_1}} &
  0 &
  \cdots &
  \frac{\hbar}{1-q_{b_2}/q_{b_t}}\\[1ex]
  \vdots&\vdots
  &\ddots&\vdots\\[1ex]
  \frac{\hbar}{1-q_{b_t}/q_{b_1}} &
  \frac{\hbar}{1-q_{b_t}/q_{b_2}} &
  \cdots & 0
  \end{matrix}\right]
\\& =
\hbar^t q_{b_1}\cdots q_{b_t}
\det\left[\begin{matrix}
  0 &
  \frac{1}{q_{b_2}-q_{b_1}}&
  \cdots &
  \frac{1}{q_{b_t}-q_{b_1}}\\[1ex]
  \frac{1}{q_{b_1}-q_{b_2}} &
  0 &
  \cdots &
  \frac{1}{q_{b_t}-q_{b_2}}\\[1ex]
  \vdots&\vdots
  &\ddots&\vdots\\[1ex]
  \frac{1}{q_{b_1}-q_{b_t}} &
  \frac{1}{q_{b_2}-q_{b_t}} &
  \cdots & 0
  \end{matrix}\right]
=\sum_{\pi} V(\pi),
\end{align*}
where the sum is over perfect matchings $\pi$ over $\{b_1<\ldots<b_t\}$. We can extend $\pi$ to a (not necessarily perfect) matching over $\{1,\ldots,n\}$ by setting $\pi(a_i)=a_i$ for $i=1,\ldots,s$.
Thus the determinant equals 
$$\sum J(\pi)V(\pi)$$
with sum over all matchings of $\pi$ over $\{1,\ldots,n\}$.
\end{proof}

\begin{proof}[Proof of Theorem \ref{combforEk}]
Similarly to the proof of Lemma \ref{Lemmawhenk=n}, the coefficient of $y^{n-k}$ in $\det\big(y 1_n+M(\chi)\big)$ is
\begin{align*}
\sum_{K}
\det\left[\begin{matrix}
\chi_{c_1} &
\frac{\hbar}{1-q_{c_1}/q_{c_2}}&
\cdots &
\frac{\hbar}{1-q_{c_1}/q_{c_k}}\\[1ex]
\frac{\hbar}{1-q_{c_2}/q_{c_1}} &
\chi_{c_2} &
\cdots &
\frac{\hbar}{1-q_{c_2}/q_{c_k}}\\[1ex]
\vdots&\vdots
&\ddots&\vdots\\[1ex]
\frac{\hbar}{1-q_{c_k}/q_{c_1}} &
\frac{\hbar}{1-q_{c_k}/q_{c_2}} &
\cdots & \chi_{c_k}
\end{matrix}\right]
\end{align*}
where the sum goes over all subsets $K=\{c_1<\cdots<c_k\}\subset\{1,\ldots,n\}$ of order $k$.
By Lemma \ref{Lemmawhenk=n}, it is equal to
$$\sum_K\sum_{\pi}J(\pi)V(\pi)$$
with $K$ a $k$-subset of $\{1,\ldots,n\}$ and $\pi$ going through the matchings of $K$.
This finishes the proof.
\end{proof}

\bibliographystyle{alpha}
\bibliography{mybibliography.bib}

\begin{thebibliography}{BLPW16}

\bibitem[AB84]{MR721448}
M.~F. Atiyah and R.~Bott.
\newblock The moment map and equivariant cohomology.
\newblock {\em Topology}, 23(1):1--28, 1984.

\bibitem[AF]{AFbook}
David Anderson and William Fulton.
\newblock {\em Equivariant Cohomology in Algebraic Geometry, Cambridge}.
\newblock Studies in Advanced Mathematics, Cambridge Univ. Press, Cambridge,
  2023.

\bibitem[AMSS17]{aluffi2017shadows}
Paolo Aluffi, Leonardo~C Mihalcea, J{\"o}rg Sch{\"u}rmann, and Changjian Su.
\newblock Shadows of characteristic cycles, {V}erma modules, and positivity of
  {C}hern-{S}chwartz-{M}acpherson classes of {S}chubert cells.
\newblock {\em to appear in Duke Math J., arXiv preprint arXiv:1709.08697},
  2017.

\bibitem[AS95]{MR1337131}
Alexander Astashkevich and Vladimir Sadov.
\newblock Quantum cohomology of partial flag manifolds {$F_{n_1\cdots n_k}$}.
\newblock {\em Comm. Math. Phys.}, 170(3):503--528, 1995.

\bibitem[BH18]{MR3817553}
Indranil Biswas and Jacques Hurtubise.
\newblock Geometry of {C}alogero-{M}oser systems.
\newblock {\em J. Math. Phys.}, 59(9):091403, 10, 2018.

\bibitem[BLPW16]{MR3594665}
Tom Braden, Anthony Licata, Nicholas Proudfoot, and Ben Webster.
\newblock Quantizations of conical symplectic resolutions {II}: category
  {$\mathcal{O}$} and symplectic duality.
\newblock {\em Ast\'{e}risque}, (384):75--179, 2016.
\newblock with an appendix by I. Losev.

\bibitem[BMO11]{MR2782198}
Alexander Braverman, Davesh Maulik, and Andrei Okounkov.
\newblock Quantum cohomology of the {S}pringer resolution.
\newblock {\em Adv. Math.}, 227(1):421--458, 2011.

\bibitem[Bor53]{MR51508}
Armand Borel.
\newblock Sur la cohomologie des espaces fibr\'{e}s principaux et des espaces
  homog\`enes de groupes de {L}ie compacts.
\newblock {\em Ann. of Math. (2)}, 57:115--207, 1953.

\bibitem[CF99]{MR1695799}
Ionu\c{t} Ciocan-Fontanine.
\newblock On quantum cohomology rings of partial flag varieties.
\newblock {\em Duke Math. J.}, 98(3):485--524, 1999.

\bibitem[CG10]{MR2838836}
Neil Chriss and Victor Ginzburg.
\newblock {\em Representation theory and complex geometry}.
\newblock Modern Birkh\"{a}user Classics. Birkh\"{a}user Boston, Ltd., Boston,
  MA, 2010.
\newblock Reprint of the 1997 edition.

\bibitem[CK99]{MR1677117}
David~A. Cox and Sheldon Katz.
\newblock {\em Mirror symmetry and algebraic geometry}, volume~68 of {\em
  Mathematical Surveys and Monographs}.
\newblock American Mathematical Society, Providence, RI, 1999.

\bibitem[Dan22]{Dan22}
Ivan Danilenko.
\newblock Quantum differential equation for slices of the affine
  {G}rassmannian.
\newblock {\em arXiv preprint arXiv:2210.17061}, 2022.

\bibitem[EFMV11]{MR2769318}
Pavel Etingof, Giovanni Felder, Xiaoguang Ma, and Alexander Veselov.
\newblock On elliptic {C}alogero-{M}oser systems for complex crystallographic
  reflection groups.
\newblock {\em J. Algebra}, 329:107--129, 2011.

\bibitem[Eti07]{MR2296754}
Pavel Etingof.
\newblock {\em Calogero-{M}oser systems and representation theory}.
\newblock Zurich Lectures in Advanced Mathematics. European Mathematical
  Society (EMS), Z\"{u}rich, 2007.

\bibitem[FN16]{LaszloJanosAdd}
L\'aszl\'o~M. Feh\'er and J\'anos Nagy.
\newblock Additive combinatorics using equivariant cohomology.
\newblock 2016.

\bibitem[FW04]{MR2072765}
W.~Fulton and C.~Woodward.
\newblock On the quantum product of {S}chubert classes.
\newblock {\em J. Algebraic Geom.}, 13(4):641--661, 2004.

\bibitem[Gin98]{MR1649626}
Victor Ginzburg.
\newblock Geometric methods in the representation theory of {H}ecke algebras
  and quantum groups.
\newblock In {\em Representation theories and algebraic geometry ({M}ontreal,
  {PQ}, 1997)}, volume 514 of {\em NATO Adv. Sci. Inst. Ser. C: Math. Phys.
  Sci.}, pages 127--183. Kluwer Acad. Publ., Dordrecht, 1998.
\newblock Notes by Vladimir Baranovsky [V. Yu. Baranovski\u{\i}].

\bibitem[GK95]{MR1328256}
Alexander Givental and Bumsig Kim.
\newblock Quantum cohomology of flag manifolds and {T}oda lattices.
\newblock {\em Comm. Math. Phys.}, 168(3):609--641, 1995.

\bibitem[Hec97]{MR1627113}
G.~J. Heckman.
\newblock Dunkl operators.
\newblock Number 245, pages Exp. No. 828, 4, 223--246. 1997.
\newblock S\'{e}minaire Bourbaki, Vol. 1996/97.

\bibitem[Hum78]{MR499562}
James~E. Humphreys.
\newblock {\em Introduction to {L}ie algebras and representation theory},
  volume~9 of {\em Graduate Texts in Mathematics}.
\newblock Springer-Verlag, New York-Berlin, 1978.
\newblock Second printing, revised.

\bibitem[Kam22]{J22}
Joel Kamnitzer.
\newblock Symplectic resolutions, symplectic duality, and {C}oulomb branches.
\newblock {\em arXiv preprint arXiv::2202.03913}, 2022.

\bibitem[Kim99]{MR1680543}
Bumsig Kim.
\newblock Quantum cohomology of flag manifolds {$G/B$} and quantum {T}oda
  lattices.
\newblock {\em Ann. of Math. (2)}, 149(1):129--148, 1999.

\bibitem[KMP21]{MR4295090}
Joel Kamnitzer, Michael McBreen, and Nicholas Proudfoot.
\newblock The quantum {H}ikita conjecture.
\newblock {\em Adv. Math.}, 390:Paper No. 107947, 53, 2021.

\bibitem[Liu13]{MR3184181}
Chiu-Chu~Melissa Liu.
\newblock Localization in {G}romov-{W}itten theory and orbifold
  {G}romov-{W}itten theory.
\newblock In {\em Handbook of moduli. {V}ol. {II}}, volume~25 of {\em Adv.
  Lect. Math. (ALM)}, pages 353--425. Int. Press, Somerville, MA, 2013.

\bibitem[Lus88]{MR972345}
George Lusztig.
\newblock Cuspidal local systems and graded {H}ecke algebras. {I}.
\newblock {\em Inst. Hautes \'{E}tudes Sci. Publ. Math.}, (67):145--202, 1988.

\bibitem[MNS22]{MR4465997}
Leonardo~C. Mihalcea, Hiroshi Naruse, and Changjian Su.
\newblock Left {D}emazure-{L}usztig operators on equivariant (quantum)
  cohomology and {K}-theory.
\newblock {\em Int. Math. Res. Not. IMRN}, (16):12096--12147, 2022.

\bibitem[MO19]{MR3951025}
Davesh Maulik and Andrei Okounkov.
\newblock Quantum groups and quantum cohomology.
\newblock {\em Ast\'{e}risque}, (408):ix+209, 2019.

\bibitem[MP15]{MR3421784}
Michael McBreen and Nicholas Proudfoot.
\newblock Intersection cohomology and quantum cohomology of conical symplectic
  resolutions.
\newblock {\em Algebr. Geom.}, 2(5):623--641, 2015.

\bibitem[MS13]{MR3095147}
Michael~B. McBreen and Daniel~K. Shenfeld.
\newblock Quantum cohomology of hypertoric varieties.
\newblock {\em Lett. Math. Phys.}, 103(11):1273--1291, 2013.

\bibitem[Mui60]{MR0114826}
Thomas Muir.
\newblock {\em A treatise on the theory of determinants}.
\newblock Dover Publications, Inc., New York, 1960.
\newblock Revised and enlarged by William H. Metzler.

\bibitem[Oko18]{MR3966746}
Andrei Okounkov.
\newblock On the crossroads of enumerative geometry and geometric
  representation theory.
\newblock In {\em Proceedings of the {I}nternational {C}ongress of
  {M}athematicians---{R}io de {J}aneiro 2018. {V}ol. {I}. {P}lenary lectures},
  pages 839--867. World Sci. Publ., Hackensack, NJ, 2018.

\bibitem[OP10]{MR2587340}
A.~Okounkov and R.~Pandharipande.
\newblock Quantum cohomology of the {H}ilbert scheme of points in the plane.
\newblock {\em Invent. Math.}, 179(3):523--557, 2010.

\bibitem[Pol19]{Polychronakos2019}
Alexios~P. Polychronakos.
\newblock Feynman's proof of the commutativity of the calogero integrals of
  motion.
\newblock {\em Annals of Physics}, 403:145--151, apr 2019.

\bibitem[SS93]{shastry1993super}
B~Sriram Shastry and Bill Sutherland.
\newblock Super lax pairs and infinite symmetries in the 1/r 2 system.
\newblock {\em Physical review letters}, 70(26):4029, 1993.

\bibitem[ST97]{MR1621570}
Bernd Siebert and Gang Tian.
\newblock On quantum cohomology rings of {F}ano manifolds and a formula of
  {V}afa and {I}ntriligator.
\newblock {\em Asian J. Math.}, 1(4):679--695, 1997.

\bibitem[Su16]{MR3439689}
Changjian Su.
\newblock Equivariant quantum cohomology of cotangent bundle of {$G/P$}.
\newblock {\em Adv. Math.}, 289:362--383, 2016.

\bibitem[Su17]{MR3595900}
Changjian Su.
\newblock Restriction formula for stable basis of the {S}pringer resolution.
\newblock {\em Selecta Math. (N.S.)}, 23(1):497--518, 2017.

\end{thebibliography}

\end{document}